\documentclass[12pt,reqno]{amsart}

\renewcommand{\div}{\mathop\mathrm{div}}

\def\R{\mathbb R}
\def\eb{\varepsilon}
\def\Bbb{\mathbb}
\def\({\left(}
\def\){\right)}
\def\Cal{\mathcal}
\def\divv{\operatorname{div}}
\def\Rot{\operatorname{curl}}

\def\Dx{\Delta_x}
\def\Nx{\nabla_x}
\def\Dt{\partial_t}

\usepackage{amssymb}
\usepackage{amsmath}
\usepackage{latexsym}
\usepackage{amsthm}
\usepackage{epsfig}
\usepackage{color}
\usepackage{comment}
\usepackage{amsfonts,amssymb,mathrsfs,amscd}

\usepackage{enumitem}
\usepackage{bm}

\hfuzz2pt % Don't bother to report over-full boxes if over-edge is < 2pt

\hsize=5 true in \textheight=8.4 true in

\setlength{\textwidth}{13.5cm}
\setlength{\textheight}{19.5cm}
\setlength{\footskip}{1.3cm}

\newtheorem{proposition}{Proposition}[section]
\newtheorem{theorem}[proposition]{Theorem}
\newtheorem{corollary}[proposition]{Corollary}
\newtheorem{lemma}[proposition]{Lemma}
\theoremstyle{definition}
\newtheorem{definition}[proposition]{Definition}
\newtheorem{remark}[proposition]{Remark}

\numberwithin{equation}{section}
\raggedbottom

\def\R{\mathbb R}
\def\eb{\varepsilon}
\def\supp{\operatorname{supp}}
\def\Ree{\operatorname{Re}}
\def\({\left(}
\def\){\right)}
\def\Cal{\mathcal}
\def\divv{\operatorname{div}}
\def\Rot{\operatorname{curl}}

\def\Dx{\Delta_x}
\def\Nx{\nabla_x}
\def\Dt{\partial_t}

 \begin{document}
%
 %\title[Lower bounds on the attractors dimension for 2D Navier--Stokes]
 \title[Lower bounds on the attractor dimension]% for 2D Navier--Stokes]
 {Multi-vortices and lower bounds on the attractor dimensions  for  2D Navier--Stokes equations}
\author[ A.G. Kostianko, A.A. Ilyin, D. Stone,
 S.V. Zelik] {
Anna Kostianko${}^{1,5}$, Alexei Ilyin${}^{2,4,5}$, Dominic Stone${}^3$   and
 Sergey Zelik${}^{1,2,3,5}$}
\keywords{2D Navier--Stokes equations, multi-vortices, attractors,
fractal dimension}
\email{ilyin@keldysh.ru}
\email{a.kostianko@zjnu.edu.cn}
\email{d.stone@surrey.ac.uk}
\email{s.zelik@surrey.ac.uk}
\address{${}^1$ Zhejiang Normal University, Department of Mathematics, Zhejiang, China}
\address{${}^2$ Keldysh Institute of Applied Mathematics, Moscow, Russia}
\address{${}^3$ University of Surrey, Department of Mathematics, Guildford, UK}
\address{${}^4$ Institute for Information Transmission Problems, Moscow, Russia}
\address{${}^5$ HSE University, Nizhny Novgorod, Russia}
\begin{abstract}
We present a principally new method
   for obtaining the lower bounds
for the attractors dimensions of the equations related with
hydrodynamics, which is not based on the Kolmogorov flows, and apply
it to the classical 2D Navier--Stokes equations in a bounded domain
as well as for the  Navier--Stokes equations with Ekman damping in
the whole plane. In particular, in the case of bounded domains, we give the lower
bounds, which are similar to the well-known estimate on a torus.  In both cases  our estimates are sharp.  Note that no lower bounds for these two cases were known before.
\par
We suggest to use the so-called multi-vortex, which consists of a well-separated Vishik vortices (i.e., spectrally unstable localized in space flows constructed by M.M. Vishik), as the analogue of the Kolmogorov flows. Note also that this method reproduces  the known result on the torus and that it is applicable to many other equations of hydrodynamics.
\end{abstract}
\subjclass[2020]{35B40, 35Q30, 76D03}
\maketitle
\tableofcontents
 \setcounter{equation}{0}
\section{Introduction}\label{sec1}
It is well-known that dynamical systems (DS) arising in many areas of natural sciences generate strong instabilities and demonstrate very irregular and complicated (chaotic) behavior. The classical examples are turbulent flows arising in hydrodynamics, in particular,  generated by  Navier-Stokes equations.
One of the most surprising  lessons of the 20th century is that
similar behavior with strong random features can be generated by
even relatively simple and deterministic ordinary differential
equations (ODEs). This phenomenon, which is referred nowadays as ``deterministic chaos", is intensively studied starting from the second part of the 20th century and a lot of prominent results are obtained in this direction as well as a lot of methods for investigating it are developed (such as hyperbolic theory, the Lyapunov exponents, homoclinic bifurcation theory, strange attractors, etc., see \cite{ER85,GH83,KH95,Rue81,Shi01} and references therein). However, this phenomenon occurred to be much more complicated than it was thought from the very beginning, so,  despite many efforts made, the existing theory is mainly restricted to low dimensional model examples and there are still lack of  effective methods for studying the deterministic chaos in higher dimensional systems of ODEs.
\par
The situation is a priori more complicated when we  deal with dynamical systems generated by partial differential equations (PDEs). For such systems, the initial phase space is infinite-dimensional (e.g. $L^2(\Omega)$, where $\Omega$ is a domain in $\R^d$) which causes a lot of extra difficulties. In addition, together with a temporal variable $t$ and related temporal chaos, we now have spatial variables $x\in\Omega$, so the so-called spatial chaos may naturally appear. Also, we may have an interaction between spatially and temporal chaotic modes, which forms the so-called spatio-temporal chaos. As a result, a new type of purely infinite-dimensional dynamics with principally new level of complexity (which is unreachable  in systems of ODEs studied in classical dynamics) may appear, see \cite{MZ07,TZ10,Zel04} and references therein.
\par
Nevertheless, there exists a wide class of PDEs, namely, the class of
{\it dissipative} PDEs (usually in bounded domains), where
despite the infinite-dimensionality of the initial phase space,
the effective limit dynamics is in some sense finite-dimensional
and can be described by finitely many parameters (e.g., the  ``order parameters" suggested by I. Progogine for describing the evolution of dissipative structures, see \cite{Pri77}) whose evolution obeys a system of ODEs (which is called an inertial form (IF) of the initial dissipative system).
\par
\vskip 10pt
Thus, the natural strategy for the study of such systems would be
\begin{enumerate}
  \item To find an effective  scheme for doing the above mentioned finite-dimensional reduction;
   \item To write out the reduced ODEs and study them using the methods of the classical DS theory;
   \item To return back and get the conclusion about the initial dissipative dynamics.
 \end{enumerate}
    In particular, an application of this strategy to Navier-Stokes equations gives us a hope to understand the nature of turbulence. In addition, in this case, the Kolmogorov theory predicts the  finite-dimensionality of the limit dynamics and gives some hints about the number of degrees of freedom of the reduced system of ODEs depending on the physical parameters of the considered system, see \cite{Frish,Tem}. This is one more source of motivation to study the finite-dimensional reduction problem: obtaining  sharp upper and lower bounds for the number of degrees of freedom we may check these predictions rigorously, which would give us a way to  verify at least partially the validity of the Kolmogorov theory.
\par
 The mathematical theory of infinite-dimensional dissipative DS has been intensively developing during the last fifty years, see \cite{BV,FP67,Lad72,Tem,Z} and references therein. However, despite many efforts done by prominent scientists, the nature of the above mentioned finite-dimensional reduction remains a mystery. In particular, this reduction seems to require principally different tools depending on the smoothness of the reduced IF we want to obtain. Indeed, the most straightforward and natural interpretation of this reduction is related with inertial manifolds (IMs), which are  globally stable normally hyperbolic submanifolds of finite dimension in the phase space of the considered problem, see \cite{Man77, FST88,Zel14}. If such an object is found, the reduced IF is obtained by the projection of the initial equations to the base of IM and any trajectory of the initial DS approaches exponentially fast   the appropriate trajectory of the IF. However, the existence of such an object requires splitting of the phase space to slow and fast variables (usually formulated in terms of spectral gap conditions), which are very restrictive and are usually satisfied only in the case of one space dimension. For some special classes of PDEs this problem can be overcome, see \cite{MPS88,K17,KSZ22} and references therein and $C^{1+\eb}$ IF can be constructed (see \cite{KZ24} for more smooth IFs), but even in these cases the dimension of the IM is usually extremely high and is not compatible with  our expectations.
 \par
 An alternative way for constructing the finite-dimensional reduction is based on a concept of a global attractor and on the Man\'e projection theorem. By definition, the global attractor is a compact invariant set in the phase space which attracts the images of all bounded sets as time tends to infinity. Thus, on the one hand, it contains all non-trivial dynamics (up to the transitional one, which is assumed non-essential in this theory) and, on the other hand, it is usually essentially smaller than the initial phase space. Moreover, one of the key results of the attractors theory is that, under the mild assumptions on the system considered, the fractal dimension $\dim_f(\mathcal A)$ of the attractor $\mathcal A$ is finite. Combining this result with the Man\'e projection theorem, we get one-to-one projection of the attractor to a generic plane of dimension $N>2\dim_f(\mathcal A)$ with H\"older continuous inverse. Thus, we get a {\it H\"older continuous} IF, defined on a compact set of $\R^N$, which captures the dynamics on the attractor. This justifies the commonly accepted paradigm to treat the fractal dimension of the global attractor as a number of degrees of freedom of the reduced finite-dimensional dynamics, see \cite{Rob,Tem,Zel14} and references therein. Note that such H\"older continuous reduction is far from being perfect since non-smooth ODEs may demonstrate much more complicated behavior than the smooth ones, see \cite{KZ18,Zel14} for relatively simple  examples where the fractal dimension of the attractor is finite, but the original dissipative system demonstrate the features, which are impossible in classical dynamics, so the above paradigm should be corrected. However, up to the moment, the fractal dimension is probably the only way, which allows us to get the realistic bounds for the number of degrees of freedom of the limit dynamics (especially for DS arising in hydrodynamics). This explains a great permanent interest to obtaining as good as possible upper and lower bounds for various classes of equations arising in various areas of modern science, see \cite{BV,CF85,MP76,Tem,IKZ,IMT,IPZ,IKZ22,IKZ24,IZ25} and references therein.
\par
In this paper, we present a new method of obtaining lower bounds for dissipative PDEs and demonstrate it on the model example of the 2D Navier-Stokes equations with and without Ekman damping:
%$$
\begin{equation}\label{-1.NS}
\Dt u+\divv(u\otimes u)+\Nx p+\mu u=\nu\Dx u+f,\ \ \divv u=0,\ \ u\big|_{t=0}=u_0
\end{equation}
%$$
in the domain $\Omega\subset\R^2$,
where $u=(u_1,u_2)$ is an unknown velocity field, $p$ is pressure, $\nu>0$, $\mu\ge0$ are given kinematic viscosity and Ekman damping respectively and $f$ are given external forces. We consider two cases.
\par
{\it Case I.} $\mu=0$ and $\Omega$ is a bounded simply connected domain with  smooth boundary, endowed with the Dirichlet boundary conditions. This case corresponds to the classical 2D Navier-Stokes equations. The natural dimensionless quantity here is the Grashof number
$$
G:=\frac{|\Omega|\|f\|_{L^2}}{\nu^2}
$$
and the estimates for the dimension of the attractors are traditionally written in terms of this quantity.
On the other hand, all of the known upper bounds for the attractor's dimension in bounded domains and Dirichlet boundary conditions are based on the $H^{-1}$-norm  of the right-hand side $f$ and the $L^2$-norm appears just as a simplification through the Poincare inequality. However, as we will see later, this ``simplification" may loose the important information and may destroy the sharpness of the obtained estimates, so, for the case of Dirichlet boundary conditions it looks preferable to use the $H^{-1}$-analogue of the Grashof number:
$$
G_{-1}:=\frac{|\Omega|^{1/2}\|f\|_{\dot H^{-1}}}{\nu^2}
$$
and, probably, keep $G$ for the case of periodic boundary conditions. Recall that the $\dot H^{-1}$-norm $f$ is computed as follows: let $f=\Nx p+\divv F$ for some tensor field $F\in [L^2(\Omega)]^4$ and $p\in L^2(\Omega)$ (obviously, such a representation exists for any $f\in H^{-1}$, to find it, we may use $H^{-1}\to H^1$ regularity for the linear Stokes equation). Then,
$$
\|f\|_{\dot H^{-1}}:=\inf_{F}\|F\|_{L^2},
$$
where the  infimum  is taken over all $F\in L^2$ satisfying $f=\Nx p+\divv F$.
\par
{\it Case II.} $\nu,\mu>0$ and $\Omega=\R^2$. This is the Ekman damped
version of 2D Navier--Stokes equations and the analogue of the Grashoff number is the following quantity:
$$
G_1=\frac{\|\Rot f\|_{L^2}^2}{\mu^3\nu}.
$$
Obtaining upper and lower bounds for the attractor's dimension
for the Navier--Stokes equations has a rich and interesting
history, see \cite{Liu,Lieb84,CFT,CF85,Lad82,BV,Tem} and
references therein. We mention here only that the most
effective  technical tool nowadays for the upper bounds
in this case is the volume contraction method combined with
the Lieb--Thirring inequalities  and/or
collective $L^\infty$ inequalities of Brezis--Gallouet type, which gives the following results:
%$$
\begin{equation}\label{-1.up}
{\rm Case \ I:\ \ } \dim_f(\Cal A)\le c G_{-1},\ \ \ {\rm Case\ II:\ \ } \dim_f(\Cal A)\le c_1G_1.
\end{equation}
%$$
Also, in the case of periodic boundary conditions, the first
estimate can be essentially improved \cite{CFT,Tem,DoerGib}:
%$$
\begin{equation}\label{-1.up1}
\dim_f(\Cal A)\le CG^{2/3}\ln^{1/3}(1+G),
\end{equation}
%$$
where $c,c_1,C$ are some absolute constants, see \cite{IMT,IKZ} for the explicit expressions for them.
\par
The lower bounds for the attractor dimension for Navier--Stokes type equations are obtained for the case of periodic boundary conditions only. In this case we have
%$$
\begin{equation}\label{-1.lowt}
{\rm Case\ I:\ \ } \dim_f (\Cal A)\ge c_2 G^{2/3},\ \ \ \ {\rm Case\ II:\ \ }\dim_f(\Cal A)\ge c_3 G_1,
\end{equation}
%$$
see \cite{Liu,IKZ,IMT}.

As usual, in contrast to the upper bounds, the lower bounds hold not for all right-hand sides, but for the specially chosen ones.The crucial thing for such estimates is to find/construct an equilibrium (or periodic orbit) with as big as possible instability index. Then the unstable manifold of this equilibrium belongs to the attractor and the fractal dimension of the attractor is minorated  by the dimension of the manifold, i.e. by the instability index of the equilibrium constructed.
\par
To the best of our knowledge, in all previous works, the lower bounds for equations related with hydrodynamics are obtained using the {\it Kolmogorov flows} and are based on the classical work \cite{MS}, where the crucial method of estimating the instability index for such flows were invented.
That works perfectly for the case of torus or sphere and gives the estimates for the number of degrees of freedom comparable with the predictions of the Kolmogorov theory, see \cite{Frish,Tem}, but does not work in a bounded domain or the whole plane since the Kolmogorov flows cannot be reasonably restricted to a bounded domain or extended preserving the finiteness of the energy to the whole plane.
By this reason, no lower bounds which grow as $G$ (resp. $G_1$) tend to infinity were known for the Navier--Stokes equation in the Case I (resp. Case II) despite many efforts done during the last 40 years. We also note that the dependence of the finite-dimensional reduction on the boundary conditions (BC) is still poorly understood, so the difference between the physical Dirichlet BC  and the ``simplified" periodic ones may a priori be drastic. For instance, on the level of IMs, there are examples of equations (1D reaction-diffusion-advection systems) where the IM always exists for Dirichlet or Neumann BC, but may not exist for the periodic case, see \cite{KZ18}.
\par
The main aim of this paper is to cover the above mentioned gap and to get the  sharp  lower bounds for the attractor in two aforementioned cases. Namely, we prove the following result, which was announced in \cite{K24}.
\begin{theorem}\label{Th-1.main} The fractal dimension of the global attractor $\Cal A$ for equation \eqref{-1.NS} in the phase space $\Phi:=L^2_\sigma(\Omega)$ of square integrable divergence free vector fields possesses estimates \eqref{-1.lowt}.  Moreover, in the case of a bounded domain $\Omega$ with Dirichlet boundary conditions, we also have
%$$
\begin{equation}\label{-1.sharp}
\dim_f(\Cal A)\ge c_3G_{-1}
\end{equation}
%$$
  see  section \ref{s5} for more details.
\end{theorem}
Thus, in Case II ($\Omega=\R^2$), we have sharp (with respect to $G_1\to\infty$) upper and lower bounds and in Case I (classical Navier-Stokes equations, bounded domain, Dirichlet BC), we still have a gap
$$
c_2G^{2/3}\le\dim_f(\Cal  A)\le c G_{-1}\le \tilde c G
$$
if the standard Grashof number $G$ is used. However, the usage of $G_{-1}$ instead gives the sharp bounds
$$
c_3G_{-1}\le \dim_f(\Cal A)\le cG_{-1}.
$$
The same $H^{-1}$-sharp estimate holds with periodic boundary conditions as well. We emphasize once more that the upper bound for the dimension of the attractor in terms of the quantity $G_{-1}$ is well-known, see e.g. \cite{BV} (see also \cite{RobD} and references therein for the comparison of $G$ and $G_{-1}$ and the discussion of their physical meaning and optimality). We give some extra details in Remark \ref{Rem7.GG} below.
\par
The proof of these results is based on a synergy of two basic ideas. The first is the existence of a fully spatially localized time independent velocity profile, the linearization of the Navier--Stokes system in $\R^2$ around which possesses at least one exponentially unstable mode -- the so-called Vishik vortex, see section \ref{s1} for details. This vortex was constructed in \cite{V1,V2}, see \cite{Al,A2} for various simplifications and generalizations, in order to verify the non-uniqueness of weak solutions for singularly forced Navier--Stokes and Euler solutions. The present work gives another non-trivial application for this object.
\par
The second basic idea is to construct an equilibrium with big instability index as a multi-vortex which consists of a sum of spatially well-separated Vishik vortices
%$$
\begin{equation}\label{-1.m}
\bar u_{\Xi}(x)=\sum_{\xi_i\in\Xi}\bar u_0(x-\xi_i),
\end{equation}
%$$
where $\bar u_0$ is the initial Vishik vortex and $\xi_i\in\Xi\subset\Omega$ are well-separated  vortex centers:
$$
|\xi_i-\xi_j|\ge L\gg1,\ \  i\ne j\ \  \text{and }\ \  d(\xi_i,\partial\Omega)\ge L\gg1
$$
(due to the proper scaling we may assume that $\nu=1$ and $\Omega$ is big, see section \ref{s5} for the details ). Crucial for our method is the fact that the instability index $\operatorname{ind}^+(\bar u_\Xi)$ of the multi-vortex is not less than $\#\Xi$ if $L$ is big enough and this gives all of the desired estimates.
\par
To get this result we use the theory of weak interaction of localized structures and the corresponding center manifold reduction, see \cite{MiZ,BLZ08} and references therein. However, in contrast to the standard case, where the interaction between localized structures is exponentially small, in our case this is not so due to the presence of pressure. Moreover, the kernel of the Leray--Helmholtz projector decays only quadratically as $|x|\to\infty$ and therefore is not integrable. The same problem arises with the adjoint eigenfunctions of the linearized equation near the initial Vishik vortex. By this reason, the standard theory will work only with finitely many vortices and, which is more important, all of the estimates will depend explicitly on the number of them and this is not acceptable for our purposes.
\par
Although the interaction of vortices is also a classical topic in hydrodynamics, see \cite{Sa92,Cs24,CW} and references therein, the methods invented there, seem not sufficient for our purposes, so we suggest a new approach, which combines the standard center manifold reduction for weak interaction of localized structures with some methods, which are specific for the Navier-Stokes equations. Developing
this approach is the most difficult and the most technical part of the paper. We believe that the theory presented here is not restricted to the model cases I and II and is applicable to many other hydrodynamical equations. We also mention that our approach works for the case of periodic BC as well and recovers in an alternative  more transparent way the known estimate for the attractor's dimension.
\par
To conclude, we give a toy model example, which clarifies where the exponent $2/3$
in \eqref{-1.lowt} comes from. Namely, let $\omega_0$ be a bounded domain
in $\R^2$ and let equation \eqref{-1.NS} with $\nu=1$, $\mu=0$ and the
right hand side $f_0$ possess a non-trivial global attractor $\Cal A_0$
(for instance, let this equation possess an equilibrium which is exponentially unstable).
 We consider equation \eqref{-1.NS} in the ``domain"
$$
\Omega:=\cup_{i=1}^N\omega_j,\ \ \omega_j=\omega_0+\xi_j,
$$
where the centers $\xi_i\in\R^2$ are chosen in such a way that $\omega_i\cap\omega_j\ne0$ if $i\ne j$. We also take
$$
f(x)=\sum_{i=1}^Nf_0(x-\xi_i).
$$
Then, since the equation in $\Omega$ is actually a Cartesian product of $N$ equations in the domain $\omega_0$, for its global attractor we have
%$$
\begin{multline*}
\Cal A\sim\Cal A_0^N,\ \dim_f\Cal A= N\dim_f\Cal A_0\ge N,\
|\Omega|\sim N,\\ \|f\|_{L^2(\Omega)}\sim N^{1/2},\  G\sim N^{3/2},
\ \|f\|_{\dot H^{-1}(\Omega)}\sim N^{1/2},\ G_{-1}\sim N,
\end{multline*}
%$$
and therefore
$$
\dim_f(\Cal A)\sim G^{2/3}\sim G_{-1}.
$$
  Unfortunately, we are unable to lift the upper bound for the dimension from this toy case to more realistic domains, but the multi-vortex approach developed in the paper allows us to do this with the lower bound.
\par
The paper is organized as follows. Section \ref{s0} is devoted to some standard facts from the theory of weighted Lebesgue and Sobolev spaces which will be used throughout of the paper. In section \ref{s1}, we collect basic facts about Vishik vortices and the spectral properties of the linearized operator, which are crucial for what follows.
\par
In section \ref{s2}, we start to study the spectral properties of the Oseen operator in $\R^2$ related with the multi-vortex considered near the unstable eigenvalue of the initial Vishik vortex. We remove the kernel by adding the corresponding approximation of the spectral projector and prove the invertibility of the obtained auxiliary operator. In section \ref{s3}, we extend this result to the case of bounded domains. In section \ref{s4}, we return to the initial Oseen operator and find the invariant subspaces which correspond to neutral and hyperbolic subspaces of the initial Vishik vortex. In particular, the key result about the instability index mentioned above is proved there.
\par
Finally, the main results of the paper are stated and proved in section~\ref{s5}.

\section{Preliminaries I. Weighted spaces and weighted estimates}\label{s0}
In this section, we introduce the key weight
functions and related weighted spaces which will be used
throughout  the paper. Namely, we  will systematically use
the following weight function
%$$
\begin{equation}\label{0.theta}
\theta(x):=(1+|x|)^{-3},\ \ \theta_{x_0}(x):=\theta(x-x_0).
\end{equation}
%$$
These functions satisfy the key inequality
%$$
\begin{equation}\label{0.tineq}
\theta(x)\theta(y)\le 4\theta(x-y)(\theta(x)+\theta(y)).
\end{equation}
Indeed, due to the
triangle inequality,
$$
(1+|x-y|)^3\le (1+|x|+|y|)^3\le 4((1+|x|)^3+|y|^3)\le 4((1+|x|)^3+(1+|y|)^3),
$$
which gives \eqref{0.tineq}, see e.g. \cite{Z1}.
\begin{lemma}\label{Lem0.sum} Let $L>0$ be a sufficiently big number. Then
%$$
\begin{equation}\label{0.sum-uni}
\sum_{n\in\Bbb Z^2}\theta(x-Ln)\le C,
\end{equation}
%$$
where the constant $C$ is independent of $x\in\R^2$. Moreover,
%$$
\begin{equation}\label{0.sum-small}
\sum_{n\in\Bbb Z^2,\ n\ne0}\theta(Ln)\le CL^{-3},
\end{equation}
%$$
where the constant $C$ is independent of $L$.
\end{lemma}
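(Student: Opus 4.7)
The plan is to handle the two estimates in the opposite order, deducing \eqref{0.sum-uni} from \eqref{0.sum-small}.

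For \eqref{0.sum-small}, I would group the lattice points by their Euclidean norm into spherical shells. Since the number of $n \in \Bbb Z^2$ with $k \le |n| < k+1$ is bounded by a universal multiple of $k+1$ (the shell has area of order $k+1$ and each lattice cell has unit area), summing by shells yields
\begin{equation*}
\sum_{n \ne 0}(1+L|n|)^{-3} \le C\sum_{k=1}^{\infty}\frac{k+1}{(1+Lk)^{3}} \le CL^{-3}\sum_{k=1}^{\infty}\frac{k+1}{k^{3}},
\end{equation*}
and the final series converges to an absolute constant. This gives \eqref{0.sum-small} with $C$ independent of $L\ge 1$.

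To deduce \eqref{0.sum-uni}, fix $x \in \R^2$ and choose $n_0 \in \Bbb Z^2$ so that $Ln_0$ is a nearest lattice point to $x$; then $|x - Ln_0| \le L/\sqrt{2}$. Setting $y := x - Ln_0$ and shifting the summation index $n \mapsto n + n_0$ transforms the sum into
\begin{equation*}
\sum_{n \in \Bbb Z^2}\theta(x - Ln) \;=\; \theta(y) + \sum_{n \ne 0}\theta(y - Ln).
\end{equation*}
The isolated term $\theta(y)$ is bounded by $\theta(0) = 1$. For the tail, the reverse triangle inequality gives $|y - Ln| \ge L|n| - |y| \ge L(|n| - 1/\sqrt{2}) \ge L|n|/4$ for every $n \ne 0$ (using $|n|\ge 1$), so that the tail is dominated by $\sum_{n\ne 0}(1 + L|n|/4)^{-3}$, which is at most $CL^{-3}$ by the same shell computation as above. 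For $L$ sufficiently large this tail is at most $1$, and hence the full sum is bounded by an absolute constant independent of $x$.

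The argument is essentially bookkeeping and no single step is a substantial obstacle; the only point that deserves attention is the uniform lower bound $|y - Ln| \ge L|n|/4$ for $n \ne 0$, which relies on the nearest-lattice-point choice of $n_0$ and is precisely what makes the constant in \eqref{0.sum-uni} independent of $x$.
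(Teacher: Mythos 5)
Your proof is correct and follows essentially the same route as the paper: reduce \eqref{0.sum-uni} to the case where the nearest grid node is the origin, use the reverse triangle inequality $|x-Ln|\ge L|n|-L/\sqrt2$, and bound the remaining sum by the convergent series $\sum_{n\ne0}|n|^{-3}$ over $\Bbb Z^2$ times $L^{-3}$, which is also exactly the content of \eqref{0.sum-small}. The only cosmetic differences are that you prove \eqref{0.sum-small} first (via an explicit shell-counting argument that the paper leaves implicit) and then feed it into \eqref{0.sum-uni}, whereas the paper argues in the opposite order and declares the second estimate analogous.
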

\begin{proof} Indeed, let $m\in L\Bbb Z^2$ be the nearest to $x$ node of the grid (without loss of generality we may assume that $m=0$). Then, $|x|\le L/\sqrt2$, so $|x-Ln|\ge L|n|-L/\sqrt2$ and
$$
\sum_{n\in\Bbb Z^2}\theta(x-Ln)\le \theta(0)+L^{-3}\sum_{n\ne0}\frac1{(|n|-1/\sqrt2)^3}\le 1+CL^{-3}
$$
and the proof of the second estimate is analogous.
\end{proof}
As the next step, we  introduce some basic weighted spaces.
\begin{definition} The space $L^2_{\theta_{x_0}}$, where $\theta_{x_0}(x):=\theta(x-x_0)$ is defined as a subspace of $L^2_{loc}(\R^2)$ by the finiteness of the following norm:
$$
\|u\|_{L^2_{\theta_{x_0}}}^2:=\int_{x\in\R^2}|u(x)|^2\theta_{x_0}(x)\,dx.
$$
The space $L^2_{\theta_{x_0}}(\Omega)$ is defined analogously. In the sequel, we will also use the spaces of vector fields $[L^2_{\theta_{x_0}}]^2$ or tensor fields $[L^2_{\theta_{x_0}}]^4$, but for simplicity we will denote them also as $L^2_{\theta_{x_0}}$ everywhere, where it does not lead to misunderstandings.
\end{definition}
The next simple result is also  useful for what follows.
\begin{lemma}\label{Lem0.eq} The following inequalities hold:
%$$
\begin{equation}\label{0.eq}
l\|u\|_{L^2_{\theta_{x_0}}}^2\le
\int_{x\in\R^2}\|u\|_{L^2(B^1_{x})}^2\theta_{x_0}(x)\,dx\le
 L\|u\|_{L^2_{\theta_{x_0}}}^2,
\end{equation}
%$$
where the positive constants $l$ and $L$ are independent of $x_0$ and $B^1_x$
is a ball of radius $1$ centered at $x\in\R^2$. Moreover, the analogous estimates
 hold for the spaces in a domain $\Omega$ if the domain satisfies some minimal
 regularity assumptions. Also the radius $1$ of the balls in the middle norm can
 be replaced by any $R$, the corresponding norms are equivalent.
\end{lemma}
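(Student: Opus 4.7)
The plan is to reduce both inequalities in \eqref{0.eq} to a pointwise equivalence, obtained by swapping the order of integration via Fubini's theorem. Writing
$$
\int_{\R^2}\|u\|_{L^2(B^1_x)}^2\theta_{x_0}(x)\,dx
=\int_{\R^2}\int_{B^1_x}|u(y)|^2\,dy\,\theta_{x_0}(x)\,dx
=\int_{\R^2}|u(y)|^2\Bigl[\int_{|x-y|<1}\theta_{x_0}(x)\,dx\Bigr]\,dy,
$$
the claim will follow once I show that there exist constants $l,L>0$, independent of $y$ and $x_0$, such that
$$
l\,\theta_{x_0}(y)\le \int_{|x-y|<1}\theta_{x_0}(x)\,dx\le L\,\theta_{x_0}(y).
$$

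The key elementary fact I would use is that on a unit ball around $y$, the quantity $|x-x_0|$ deviates from $|y-x_0|$ by at most $1$, so
$$
\frac{1}{(2+|y-x_0|)^3}\le \theta_{x_0}(x)\le \frac{1}{(1+\max(0,|y-x_0|-1))^3},\qquad x\in B^1_y.
$$
Integrating over $B^1_y$ (which has measure $\pi$) gives the inner integral an upper and lower bound, each of which is comparable to $\theta_{x_0}(y)=(1+|y-x_0|)^{-3}$ uniformly in $y$ and $x_0$. Indeed, the ratios
$$
\frac{(1+|y-x_0|)^3}{(2+|y-x_0|)^3}\quad\text{and}\quad \frac{(1+\max(0,|y-x_0|-1))^3}{(1+|y-x_0|)^3}
$$
are each bounded above and below away from zero on all of $[0,\infty)$: they tend to $1$ as $|y-x_0|\to\infty$ and are continuous, strictly positive for $|y-x_0|$ in any compact set containing the potentially singular point $|y-x_0|=0$.

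For the version on a domain $\Omega$, the only change needed is in the lower bound: I must ensure that $|B^1_y\cap\Omega|\ge c>0$ for all $y\in\Omega$, which holds under mild boundary regularity (e.g.\ a uniform cone or Lipschitz condition), so that the measure of the truncated ball is still comparable to that of the full one. The upper bound carries over immediately since the region of integration only shrinks. For the assertion that the ball radius $1$ can be replaced by any $R>0$, exactly the same computation applies: on $B^R_y$, $|x-x_0|$ deviates from $|y-x_0|$ by at most $R$, the ball has measure $\pi R^2$, and the corresponding ratios remain uniformly bounded in $y,x_0$, with the equivalence constants depending only on $R$. No step here is a real obstacle; the only minor care is handling $|y-x_0|$ near zero where $\max(0,|y-x_0|-1)$ vanishes, but in that regime $\theta_{x_0}(y)$ is bounded below by $8^{-1}$, so the ratio is trivially controlled.
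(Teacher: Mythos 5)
Your argument is correct: the Fubini swap reduces \eqref{0.eq} to the uniform two-sided comparison of $\int_{B^1_y}\theta_{x_0}(x)\,dx$ with $\theta_{x_0}(y)$, which follows exactly as you say from the fact that $(1+|x-x_0|)^{-3}$ varies by at most a fixed factor over a ball of radius $1$ (or $R$), with constants independent of $y$ and $x_0$. The paper itself does not prove this lemma but refers to \cite{EfZ,MZ1}, where essentially this same standard argument (Fubini plus the bounded-oscillation property of weights of bounded growth rate) is used, so your proposal fills in the omitted proof in the expected way, including the correct mild regularity condition $|B^1_y\cap\Omega|\ge c>0$ for the domain case.
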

For the proof of this lemma, see e.g \cite{EfZ,MZ1}.
The above lemma allows us to introduce the weighted Sobolev spaces in a convenient way.
\begin{definition}\label{Def0.Sob} The weighted Sobolev space $H^{s}(\R^2)$, $s\in\R$, is defined as a subspace of distributions for which the following norm is finite:
%$$
\begin{equation}\label{0.ws}
\|u\|_{H^{s,}_{\theta_{x_0}}}^2:=\int_{x\in\R^2}\|u\|^2_{H^{s}(B^1_x)}\theta_{x_0}(x)\,dx,
\end{equation}
%$$
where $H^s$ are the standard non-weighted Sobolev spaces.
We also recall the definition of uniformly local Sobolev spaces $H^{s}_b(\R^2)$, which are the subspaces of the space of distributions defined by the following norm:
%$$
\begin{equation}\label{0.ul}
\|u\|_{H^{s}_b}:=\sup_{x_0\in\R^2}\|u\|_{H^{s}(B^1_{x_0})}.
\end{equation}
%$$
Next, we introduce the space $L^2A^\infty_{\theta_{x_0}}$
as a subspace of $L^2_{loc}$ where the following norm is finite:
%$$
\begin{equation}
\|u\|_{L^2A^\infty_{\theta_{x_0}}}:=
\sup_{x\in\R^2}\big\{\|u\|_{L^2(B^1_x)}\theta^{-1}(x-x_0)\big\}
\end{equation}
%$$
and the Sobolev spaces $H^sA^\infty$ are defined analogously.
\end{definition}
There is an important relation between weighted and uniformly local spaces which allows to get the estimates for the uniformly local norms using the corresponding weighted norms, namely,
%$$
\begin{equation}\label{0.eq2}
l\|u\|_{L^2_b}\le\sup_{x_0\in\R^2}\|u\|_{L^2_{\theta_{x_0}}}\le L\|u\|_{L^2_b}
\end{equation}
%$$
for some positive constants $l$ and $L$, see e.g. \cite{MZ1}.

Finally, the analogue of \eqref{0.eq} for Sobolev spaces is
useful  in what follows
\begin{equation}\label{0.eq.1}
l\|u\|_{H^s_{\theta_{x_0}}}^2\le
\int_{x\in\R^2}\|u\|_{H^s(B^1_{x})}^2\theta_{x_0}(x)\,dx\le
 L\|u\|_{H^s_{\theta_{x_0}}}^2.
\end{equation}

\par
The next lemma (which is a version of the Hausdorff--Young inequality)
is  helpful for proving various weighted estimates related with integral operators.
\begin{lemma}\label{Lem0.point}Let the functions $f$ and $g$ satisfy the
following point-wise estimate:
%$$
\begin{equation}\label{0.point}
|f(x)|\le C\int_{\R^2}|g(y)|\theta(x-y)\,dy,\ \ x\in\R^2.
\end{equation}
%$$
Then
%$$
\begin{equation}\label{0.l2}
\|f\|_{L^2}\le C_1\|g\|_{L^2},\ \ \|f\|_{L^2_{\theta_{x_0}}}\le C_1\|g\|_{L^2_{\theta_{x_0}}},
\end{equation}
%$$
where $C_1$ is independent of $x_0\in\R^2$. Moreover,
%$$
\begin{equation}\label{0.ll2}
\sup_{x\in\R^2}\{|f(x)|\theta^{-1}(x-x_0)\}\le C_1\sup_{x\in\R^2}\{|g(x)|\theta^{-1}(x-x_0)\}
\end{equation}
and
\begin{equation}\label{0.Ll2}
\sup_{x\in\R^2}\{\|f\|_{L^2(B^1_x)}\theta^{-1}(x-x_0)\}\le C_1\sup_{x\in\R^2}\{\|g\|_{L^2(B^1_x)}\theta^{-1}(x-x_0)\}.
\end{equation}
\end{lemma}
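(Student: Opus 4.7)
The plan is to obtain all four inequalities as successive refinements of Young's convolution inequality, exploiting two structural facts: that $\theta\in L^1(\R^2)$ (the exponent $3$ in the definition of $\theta$ is precisely what is needed for integrability in two dimensions) and that the weight-transport inequality \eqref{0.tineq} lets us move a factor of $\theta_{x_0}$ through the convolution kernel while preserving its structure. The first bound in \eqref{0.l2} is immediate from Young's inequality applied to $|f|\le C(\theta*|g|)$.

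For the second bound in \eqref{0.l2}, I would first apply Cauchy--Schwarz inside the convolution (splitting $\theta=\theta^{1/2}\cdot\theta^{1/2}$) to get the pointwise estimate
$$
|f(x)|^2\le C^2\|\theta\|_{L^1}\int_{\R^2}\theta(x-y)|g(y)|^2\,dy,
$$
multiply by $\theta_{x_0}(x)$, integrate in $x$, and swap the order of integration. The inner integral becomes $\int\theta(x-y)\theta(x-x_0)\,dx$, which, by \eqref{0.tineq} applied with the pair $(x-y,x-x_0)$ (whose difference is $x_0-y$, so that evenness of $\theta$ kicks in), is bounded by $8\|\theta\|_{L^1}\theta(y-x_0)$. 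This delivers the weighted $L^2$-estimate.

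For the pointwise sup-type bound \eqref{0.ll2}, set $M:=\sup_x|g(x)|\theta^{-1}(x-x_0)$, so that $|g(y)|\le M\theta(y-x_0)$, and the problem reduces to estimating $\int\theta(x-y)\theta(y-x_0)\,dy$; applying \eqref{0.tineq} to $(x-y,y-x_0)$ (whose signed sum is $x-x_0$, again using evenness) produces the required bound $\le C\theta(x-x_0)$. For the final estimate \eqref{0.Ll2}, I would cover $\R^2$ by unit balls $B^1_n$ centered at the integer lattice $\Bbb Z^2$, decompose the convolution accordingly, and apply Cauchy--Schwarz on each piece to obtain
$$
|f(x)|\le C\sum_{n\in\Bbb Z^2}\|\theta(x-\cdot)\|_{L^2(B^1_n)}\|g\|_{L^2(B^1_n)}.
$$
Since $\theta(x-\cdot)$ varies only by a universal multiplicative constant across a unit ball, the first factor is $\le C\theta(x-n)$; the second is $\le M\theta(n-x_0)$ by hypothesis, with $M$ now denoting the right-hand supremum in \eqref{0.Ll2}. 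The remaining discrete sum $\sum_n\theta(x-n)\theta(n-x_0)$ is handled by \eqref{0.tineq} followed by Lemma \ref{Lem0.sum} applied with $L=1$, producing the pointwise bound $|f(x)|\le CM\theta(x-x_0)$, from which \eqref{0.Ll2} follows by integrating over $B^1_x$ and using once more the slow variation of $\theta$ on unit balls.

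The substantive ingredient common to all four arguments is the weight-transfer inequality \eqref{0.tineq}; once it is invoked, the remaining work is essentially bookkeeping. I expect the only mildly delicate point to be the last estimate, where the continuous convolution must be replaced by a discrete sum over the lattice, but the uniform summability provided by Lemma \ref{Lem0.sum} makes this step routine.
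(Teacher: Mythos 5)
Your argument is correct, and it runs on the same engine as the paper's proof -- the weight-transfer inequality \eqref{0.tineq} together with the integrability of $\theta$ -- but the technical execution differs at several points. For the two $L^2$ bounds the paper never invokes Young or a Cauchy--Schwarz splitting of the kernel; instead it squares the hypothesis, writes $\|f\|_{L^2}^2$ (respectively the weighted norm) as a triple integral, integrates out $x$ via \eqref{0.tineq}, and then symmetrizes with $|g(y)||g(z)|\le\frac12(|g(y)|^2+|g(z)|^2)$. Your route via $\|\theta*|g|\,\|_{L^2}\le\|\theta\|_{L^1}\|g\|_{L^2}$ and the pointwise bound $|f(x)|^2\le C^2\|\theta\|_{L^1}(\theta*|g|^2)(x)$ is a bit shorter and makes the constants more transparent; the paper's symmetrization has the advantage of treating the unweighted and weighted cases by one and the same manipulation. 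Your proof of \eqref{0.ll2} coincides with the paper's. For \eqref{0.Ll2} the paper does not decompose over a lattice: it quotes from \cite{EfZ} the kernel-averaging bound $\int|g(y)|\theta(x-y)\,dy\le C\int\|g\|_{L^1(B^1_z)}\theta(x-z)\,dz$ and then reuses the \eqref{0.ll2}-argument, obtaining in fact a stronger estimate with $\|g\|_{L^1(B^1_x)}$ on the right; your covering of $\R^2$ by unit balls centred at $\Bbb Z^2$, Cauchy--Schwarz on each ball, and the discrete sum bound is a self-contained substitute for that citation and is perfectly sound. One cosmetic caveat: Lemma \ref{Lem0.sum} is stated for $L$ sufficiently large, so when you sum $\sum_{n\in\Bbb Z^2}\theta(x-n)$ you should either note that the first estimate of that lemma holds for any fixed $L>0$ (with a constant depending on $L$) or simply bound this elementary series directly; this is a remark, not a gap.
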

\begin{proof} Using \eqref{0.point}, \eqref{0.tineq} and the Fubini theorem, we have
%$$
\begin{multline}
\|f\|_{L^2}^2\le C^2\int_{x\in\R^2}\int_{y\in\R^2}\int_{z\in\R^2}|g(y)||g(z)|\theta(x-y)\theta(x-z)\,dx\,dy\,dz\le\\\le
 C_1\int_{y\in\R^2}\int_{z\in\R^2}|g(y)||g(z)|\theta(y-z)\,dy\,dz\le\\\le C_1\int_{z\in\R^2}\int_{y\in\R^2}\frac12(|g(y)|^2+|g(z)|^2)\theta(y-z)\,dy\,dz\le C_2\|g\|^2_{L^2}.
\end{multline}
%$$
To verify  the weighted estimate, we write
%$$
\begin{multline*}
\|f\|^2_{L^2_{\theta_{x_0}}}\le C\int_{x,y,z}(|g(y)|^2+|g(z)|^2)\theta(x-y)\theta(x-z)\theta(x-x_0)\,dx\,dy\,dz=\\=
2C\int_{x,y,z}|g(y)|^2\theta(x-y)\theta(x-z)\theta(x-x_0)\,dx\,dy\,dz\le\\\le
C_1\int_{x,y}|g(y)|^2\theta(x-y)\theta(x-x_0)\,dx\,dy\le \\\le 4C_1\int_y|g(y)|^2\theta(y-x_0)\int_x(\theta(x-y)+\theta(x-x_0))\,dx\,dy\le C_2\|g\|_{L^2_{\theta_{x_0}}}^2.
\end{multline*}
%$$
  Thus, estimates \eqref{0.l2} are verified. Let us check now estimate \eqref{0.ll2}, using again the key inequality \eqref{0.point}
%$$
\begin{multline}
|f(x)|\theta(x-x_0)^{-1}\le C\int_{y\in\R^2}|g(y)|\theta(y-x_0)^{-1}\theta(y-x_0)\theta(y-x)\theta(x-x_0)^{-1}\,dy\\\le 4C\sup_{y\in\R^2}\{|g(y)|\theta^{-1}(y-x_0)\}\int_{y\in\R^2}(\theta(y-x_0)+\theta(y-x))\,dy\le\\\le C_1\sup_{y\in\R^2}\{|g(y)|\theta^{-1}(y-x_0)\}
\end{multline}
%$$
and estimate \eqref{0.Ll2} is proved.
\par
Finally, on order to get \eqref{0.Ll2}, we use that
$$
\int_{y\in\R^2}|g(y)|\theta(x-y)\,dy\le
C\int_{z\in\R^2}\|g\|_{L^1(B^1_z)}\theta(x-z)\,dz,
$$
see e.g. \cite{EfZ} and after that, from \eqref{0.ll2}, we get
an even stronger estimate
$$
\sup_{x\in\R^2}\{|f(x)|\theta(x-x_0)^{-1}\}\le
C\sup_{x\in\R^2}\{\|g\|_{L^1(B^1_x)}\theta(x-x_0)^{-1}\}
$$
which finishes the proof of the lemma.
\end{proof}
We will also need in the sequel the discrete version of this lemma.
\begin{lemma}\label{Lem1.disc} Let $L$ be big enough and let
$g_j\in l^2(\Bbb Z^2)$ and $f$ satisfy
%$$
\begin{equation}\label{1.disc}
|f(y)|\le C\sum_{j\in\Bbb Z^2}|g_j|\theta(y-Lj).
\end{equation}
%$$
Then, the analogues of estimates \eqref{0.l2}, \eqref{0.ll2} and
\eqref{0.Ll2} hold uniform\-ly with respect to $L\to\infty$. Namely, we have
$$
\|f\|_{L^2}\le C\|g\|_{l^2},\ \ \|f\|_{L^2_{\theta_{x_0}}}^2\le
C\sum_{j}|g_j|^2\theta(x_0-Lj)=:C\|g\|^2_{l^2_{\theta_{x_0}}}.
$$
Moreover,
$$
\sup_{x}\{|f(x)|\theta^{-1}(x-x_0)\}\le C\sup_{j}\{|g_j|\theta^{-1}(x_0-Lj)\},
$$
where the constant $C$ is independent of $L$ and $x_0$.
\end{lemma}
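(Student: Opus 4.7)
The plan is to follow the proof of Lemma~\ref{Lem0.point} essentially verbatim, with the continuous kernel $\theta(y-\cdot)$ replaced by the discrete family $\{\theta(y-Lj)\}_{j\in\Bbb Z^2}$ and the integral bound $\int_{\R^2}\theta<\infty$ replaced by the lattice sums of Lemma~\ref{Lem0.sum}. The only thing I need to monitor at each step is that all constants remain independent of $L$ (for $L$ large) and of the reference point $x_0\in\R^2$; this is precisely why the lemma is stated separately rather than deduced as a corollary by approximating $g$ on the grid.

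For the unweighted $L^2$ bound I would square \eqref{1.disc}, open the resulting double sum, integrate in $y$, and apply the key inequality \eqref{0.tineq} to the product $\theta(y-Lj)\theta(y-Lk)$; the $y$-integral then produces a factor $C\theta(L(j-k))$. Symmetrising via $|g_j||g_k|\le\tfrac12(|g_j|^2+|g_k|^2)$ and collapsing the inner sum $\sum_{k}\theta(L(j-k))\le C$ through \eqref{0.sum-uni} yields $\|f\|_{L^2}^2\le C\|g\|_{l^2}^2$. For the weighted $L^2$ version I would insert $\theta(y-x_0)$ before integrating and apply \eqref{0.tineq} once more to transfer the $x_0$-weight onto the lattice points, producing a combination of the form $\theta(L(j-k))(\theta(Lj-x_0)+\theta(Lk-x_0))$; Lemma~\ref{Lem0.sum} again controls the resulting off-diagonal sums and delivers the claim $\|f\|_{L^2_{\theta_{x_0}}}^2\le C\sum_j|g_j|^2\theta(x_0-Lj)$.

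For the pointwise estimate I would factor each summand of the bound on $|f(x)|\theta(x-x_0)^{-1}$ as
\[
\bigl(|g_j|\theta(Lj-x_0)^{-1}\bigr)\cdot\theta(Lj-x_0)\theta(x-Lj)\theta(x-x_0)^{-1},
\]
pull out the supremum in $j$, and invoke \eqref{0.tineq} (using evenness of $\theta$) in the form $\theta(a)\theta(b)\le 4\theta(a+b)(\theta(a)+\theta(b))$ with $a=Lj-x_0$, $b=x-Lj$, so that $a+b=x-x_0$ and the excess factor $\theta(x-x_0)^{-1}$ is exactly cancelled. What remains is the lattice sum $\sum_{j}(\theta(Lj-x_0)+\theta(x-Lj))$, uniformly bounded in $x$, $x_0$ and $L$ by \eqref{0.sum-uni}, after which taking the supremum in $x$ yields the claim.

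The main (and in fact only) obstacle is bookkeeping: namely, choosing the right order of factors when applying \eqref{0.tineq}, so that in each of the three settings the surplus weight cancels and the residual lattice sum is precisely of the form controlled by \eqref{0.sum-uni}. No genuinely new idea beyond the continuous Lemma~\ref{Lem0.point} is required, and the sharper off-diagonal bound \eqref{0.sum-small} is not needed here (it will presumably become relevant later, when quantifying the decay of the pairwise interactions between the well-separated Vishik vortices as $L\to\infty$).
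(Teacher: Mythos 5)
Your proposal is correct and is exactly the argument the paper intends: the paper omits the proof, stating only that it is ``completely analogous to the continuous case'' (Lemma~\ref{Lem0.point}), and your sketch carries out precisely that analogy, with \eqref{0.tineq} transferring the weights and \eqref{0.sum-uni} replacing the integrability of $\theta$ so that all constants stay independent of $L$ and $x_0$. Your closing remark is also accurate: only \eqref{0.sum-uni} is needed here, not \eqref{0.sum-small}.
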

Indeed, the proof of these estimates is completely analogous to the continuous case and is left to the reasder.
\par

\par
\begin{remark} The concrete choice of the kernel $\theta(z)$ in Lemmata \ref{Lem0.point} and \ref{Lem1.disc} is not essential. Important is that the kernel is integrable, so the statements of the lemmata remain true if we replace $\theta$ by $\theta^{5/6}$. We will use this fact in the sequel without further noticing.
\end{remark}
We now discuss the spaces of solenoidal vector fields and the
corresponding Leray--Helmholtz projectors. We start with the
relatively simple case where $\Omega=\R^2$.
\begin{definition}\label{Def0.sol} The space $L^2_\sigma(\R^2)$ consists of vector fields of $[L^2(\R^2)]^2$ satisfying the divergence free condition $\div u=0$, which is understood in the sense of distributions. The spaces $H^{s}_\sigma$, $H^{s}_{\theta_{x_0},\sigma}$ and $H^{s,2}_{b,\sigma}$ are defined analogously.
\end{definition}
We  denote by $P$ the orthonormal projector in $[L^2(\R^2)]^2$
to $L^2_\sigma(\R^2)$. This projector is a Zigmund--Calderon
pseudodifferential operator with the kernel
$$
K_P(z):=\frac12\delta(z)\(\begin{matrix}1&0\\0&1\end{matrix}\)+
\frac1{2\pi|z|^4}\(\begin{matrix}z_1^2-z_2^2&-2z_1z_2\\ -2z_1z_2&z_2^2-z_1^2\end{matrix}\).
$$
It is well-known that operator $P$ is bounded in $L^p$ for $1<p<\infty$,
but it is not bounded in weighted and uniformly local spaces introduced above.
However, in the equations of hydrodynamics, we often have a composition
$\nabla_x\circ P$ of the Leray projector $P$ and differentiation $\nabla_x$
and such operators have integrable cubically decaying kernels and, therefore,
are bounded in the weighted and uniformly local spaces. Namely, the following
result holds.
\begin{lemma}\label{Lem0.Pd} The operator $\Nx\circ P$ can be
presented as a sum $\Nx\circ P=N_1+N_2$, where the operator $N_1$ satisfies
$$
\|N_1 u\|_{H^{s}(B^1_{x_0})}\le C\|u\|_{H^{s+1}(B^2_{x_0})},
$$
where $C$ is independent of $x_0$ and $N_2$ is a smoothing operator with the cubically decaying kernel:
$$
|K_{N_2}(z)|\le C\theta(z).
$$
and the analogous estimates hold for all derivatives of this kernel.
\end{lemma}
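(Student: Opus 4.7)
The plan is to cut off the distributional kernel of $\Nx\circ P$ near the diagonal. Fix a smooth cutoff $\chi\in C_0^\infty(\R^2)$ with $\chi\equiv 1$ on $B^{1/4}_0$ and $\supp\chi\subset B^{1/2}_0$, and define
\[
K_{N_1}(z):=\chi(z)K_{\Nx P}(z),\qquad K_{N_2}(z):=(1-\chi(z))K_{\Nx P}(z)
\]
as tempered distributions, where $K_{\Nx P}(z):=\Nx K_P(z)$. Since $\chi+(1-\chi)\equiv 1$, this gives the required splitting $\Nx\circ P=N_1+N_2$.

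First I would handle $N_2$. Away from the origin the formula for $K_P$ stated in the lemma is a smooth function, homogeneous of degree $-2$, so $\Nx K_P(z)$ is smooth on $\R^2\setminus\{0\}$ and homogeneous of degree $-3$; multiplying by $1-\chi$, which vanishes near the origin, removes the singularity there, and therefore $K_{N_2}$ extends to a $C^\infty$ function on all of $\R^2$ with $|K_{N_2}(z)|\le C(1+|z|)^{-3}=C\theta(z)$. The same argument applied to $\partial^\alpha K_{N_2}$ (homogeneous of degree $-3-|\alpha|$ away from the origin) gives the corresponding pointwise bounds for every derivative.

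The more delicate piece is $N_1$. By construction its kernel is supported in $\{|x-y|\le 1/2\}$, so $(N_1 u)(x)$ depends only on $u$ restricted to $\overline{B^{1/2}_x}$. Consequently, given $x_0\in\R^2$, if I pick an auxiliary cutoff $\psi\in C_0^\infty(\R^2)$ with $\psi\equiv 1$ on $B^{3/2}_{x_0}$ and $\supp\psi\subset B^2_{x_0}$, then for every $x\in B^1_{x_0}$ the relevant $y$-region lies inside $B^{3/2}_{x_0}$, so $(N_1 u)(x)=(N_1(\psi u))(x)$. Writing $N_1=\Nx\circ P-N_2$ and using that $\Nx\circ P$, being a Fourier multiplier with symbol smooth away from $\xi=0$ and homogeneous of degree $+1$, is bounded $H^{s+1}(\R^2)\to H^s(\R^2)$ for every $s\in\R$ by the Mikhlin multiplier theorem, together with the analogous boundedness of $N_2$ (its kernel is $C^\infty$ with $L^1\cap L^\infty$ decay, so convolution with it is bounded on every $H^s$), I obtain
\[
\|N_1 u\|_{H^s(B^1_{x_0})}\le\|N_1(\psi u)\|_{H^s(\R^2)}\le C\|\psi u\|_{H^{s+1}(\R^2)}\le C'\|u\|_{H^{s+1}(B^2_{x_0})},
\]
with constants independent of $x_0$ by translation invariance.

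The main technical point I expect to verify carefully is the matching of the two cutoffs $\chi$ and $\psi$, so that the kernel support of $N_1$ is strictly contained in the region where $\psi\equiv 1$ whenever $x\in B^1_{x_0}$; this is what permits the replacement of $u$ by $\psi u$ inside $N_1$ without error. The Mikhlin-type boundedness of $\Nx\circ P$ is classical, and the pointwise decay of $K_{N_2}$ and of its derivatives follows directly from the homogeneity structure of the explicit kernel of $P$ displayed in the statement.
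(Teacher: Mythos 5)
Your proposal is correct and follows essentially the same route as the paper: a smooth near-diagonal/far-field splitting of the kernel, with the local piece handled by (Fourier-side) boundedness of $\Nx\circ P$ on Sobolev spaces plus a cutoff-matching argument, and the far piece estimated directly from the homogeneity of the explicit kernel of $P$. The only cosmetic difference is that you cut off the kernel of $\Nx\circ P$ directly, whereas the paper sets $K_{N_1}=\nabla_z((1-\Phi)K_P)$ and $K_{N_2}=\nabla_z(\Phi K_P)$, i.e.\ cuts off $K_P$ before differentiating; both splittings yield the same conclusions.
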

Indeed, let $\Phi\in C^\infty$ be a radial cut-off function which equals to
zero in $B^{1/2}_0$ and one in $\R^2\setminus B^1_0$. Then we define the kernels
$$
K_{N_2}(z):=\nabla_z(\Phi(z)K_P(z)),\ \ K_{N_1}(z):=\nabla_z((1-\Phi(z)) K_{P}(z)).
$$
Then, the operator $N_1$ is ``local" in a sense that the value of
$(N_1u)(x_0)$ depends on the values of the function $u(x)$ for
$x\in B^1_{x_0}$ only, which gives the first estimate of the lemma.
The second kernel now does not have any singularities at zero and
differentiating the explicit formula for $K_{P}(z)$, we get the
second estimate of the lemma.
\begin{remark}\label{Rem0.Pgood} Due to the locality of the operator
$N_1$ it is bounded in all weighted and uniformly local spaces,
if $\Nx\circ P$ is bounded in the corresponding non-weighted spaces,
so the properties of $\Nx \circ P$ in weighted/uniformly local spaces
are determined by the operator $N_2$ which has cubically decaying
kernel and, therefore, satisfies the assumptions of Lemma \ref{Lem0.point}.
\end{remark}
\begin{remark}\label{Rem0.lap} Let us consider the operator $\Dx -\lambda$ with $\Ree\lambda>0$. Then, this operator realizes an isomorphism between the weighted spaces $H^{s}_{\theta_{x_0}}$ and $H^{s-2}_{\theta_{x_0}}$ as well as between $H^{s}_b$ and $L^{s-2}_b$ and also between $H^sA^\infty_{\theta_{x_0}}$ and $H^{s-2}A^\infty_{\theta_{x_0}}$  . Indeed, the kernel of this operator can be expressed explicitly in terms of Bessel functions and it is even {\it exponentially} decaying.
\end{remark}
We conclude this section by a brief discussion of the solenoidal
vector fields $L^2_\sigma(\Omega)$ and the Leray--Helmholtz
projector $P_\Omega$ in the case of bounded domains $\Omega$ (for
simplicity, we also assume that $\Omega$ is smooth and simply
connected). In this case, it is natural to define the space
$L^2_\sigma(\Omega)$ as follows
$$
L^2_\sigma(\Omega):=\bigg[\big\{u\in C^\infty_\sigma(\R^2),\,
\supp u\in\Omega\big\}\bigg]_{L^2(\Omega)},
$$
where $[\cdots]_W$ stands for the closure in the space $W$, see e.g. \cite{Temam1}.
This space has an explicit description:
$$
L^2_\sigma(\Omega)=\big\{u\in L^2(\Omega)\,: \divv u=0,\ \ u\cdot  n\big|_{\partial\Omega}=0\big\},
$$
where $ n$ is a normal vector to the boundary.

In addition,
\begin{equation}\label{projection}
P_\Omega u=-\Rot(\Dx)^{-1}_D\Rot u=-\nabla^\bot_x(\Dx)^{-1}_D\Rot u,
\end{equation}
where $(\Dx)_D$ is the Laplace operator with Dirichlet
boundary conditions, see~\cite{Temam1}. Here and below
$$
 \Nx^\bot=(\partial_{x_2},-\partial_{x_1}),\quad u^\perp=(u_2,-u_1),
 $$
 and for a vector $u$ and a scalar $\psi$
 $$
 \Rot u=\partial_{x_2}u_1-\partial_{x_1}u_2,\quad \Rot\psi=\nabla^\perp_x\psi,
 $$
so that
$$
\Rot\Rot\psi=-\Delta_x\psi.
$$

\par
Finally,  for any $u\in L^2(\R^2)$, due to the
Leray--Helmholtz decomposition (used in the whole plane
and in the domain $\Omega$), the difference $(P-P_\Omega)u$
inside  the domain $\Omega$ is a harmonic function, namely,
%\begin{color}{red}
%$$
\begin{equation}\label{P-P}
\aligned
(P-P_\Omega)u&=\Nx Q,\quad \Dx Q=0,\\
\partial_{ n}Q\big|_{\partial\Omega}&=
(Pu-P_\Omega u)\cdot  n\vert_{\partial\Omega}=
Pu\cdot  n\vert_{\partial\Omega}.
\endaligned
\end{equation}
%$$
\begin{comment}
This follows from a slightly more general formula that
will be repeatedly used in what follows. Namely, let
$\divv v=\Rot v=0$ in $\Omega$. Then
\begin{equation}\label{P-Pv}
\aligned
v=\Nx Q,\quad \Dx Q=0,\\
\partial_{\vec n}Q\big|_{\partial\Omega}=
v\cdot \vec n\vert_{\partial\Omega}.
\endaligned
\end{equation}
In fact,  $\Rot v=0$ gives that $v=\nabla_x Q$
locally and actually globally, since
$\Omega$ is simply connected. This also gives the boundary
condition for $Q$ and $\divv v=0$ implies that $\Delta_x Q=0$.

Finally, since
\begin{multline*}
\Rot(Pu-P_\Omega u)=-\Rot\Rot\Delta_x^{-1}\Rot u\\
+\Rot\Rot(\Delta_x)_D^{-1}\Rot u=
\Rot u-\Rot u=0,
\end{multline*}
we see  that \eqref{P-P} follows from \eqref{P-Pv}.
\end{color}
\end{comment}

\section{Preliminaries II. The Vishik vortex}\label{s1}
The aim of this section is to introduce the so-called Vishik vortex, which is the key starting point for our paper, and to study its spectral properties. Namely, we assume that there exists a stationary vector field $\bar u_0\in C_0^\infty(\R^2)$ and the external force $f$ satisfying
%$$
\begin{equation}\label{1.NS-st}
\Dx \bar u_0-P\divv(\bar u_0\otimes\bar u_0)=f_0, \ \   \divv\bar u_0=0.
\end{equation}
%$$
where $P$ is the Leray projector, $u\otimes v=(u_iv_j)_{i,j=1}^2$ and
$$
\divv(U_{ij})_j=\sum_{i=1}^2\partial_{x_i}U_{ij}.
$$
For definiteness, we assume that $\supp \bar u_0,\supp f_0\subset B^1_0$ although, {\color{black} from \eqref{1.NS-st}},
in general we can guarantee only that the external force $f_0$ decays as $1/|x|^3$
as $|x|\to \infty$. However, the key property here is that $f_0\in L^1$ with all
its derivatives, which is satisfied, so we assume that $\supp f_0\subset B_0^1$
only for simplicity. {\color{black} One more useful observation which supports our simplification is that the external force can be presented in the form $f_0=P\circ \divv\tilde f_0$ where $\tilde f_0$ is fully localized.}
\par
Our key assumption on the vector field $\bar u_0$ is that it is
exponentially spectrally unstable as a solution of the
Navier--Stokes equation \eqref{1.NS-st}. The latter means that
there exists $\lambda\in\Bbb C$ such that $\Ree\lambda>0$, which
belongs to the spectrum of the linearized operator, i.e. the
operator
%$$
\begin{multline}
\Cal L_0u:=(\Delta-\lambda)u-P((\bar u_0,\Nx)u+(u,\Nx)\bar u_0)=\\=
(\Delta-\lambda)u-P\divv(\bar u_0\otimes u+u\otimes\bar u_0)
\end{multline}
%$$
%$$
%\begin{equation}
%\Cal L_0:=(\Delta-\lambda)-P\divv(\bar u_0\otimes\cdot+\cdot\otimes\bar u_0)
%\end{equation}
%$$
satisfies $0\in\sigma(\Cal L_0)$. Here and below the spectrum is considered
in the space $L^2_\sigma(\R^2)$ of square integrable solenoidal vector fields.
\par
The existence of such velocity profiles is established in \cite{V1,V2},
see also \cite{Al} where profile with the finite support has been constructed.
Actually, the concrete form of $\bar u_0(x)$ is not important for what follows,
we only need to have the existence of an unstable eigenvalue $\lambda$
as well as a sufficiently fast decay of $\bar u_0(x)$ as $|x|\to\infty$.
\par
Note that the operator $\Cal L_0$ is Fredholm of index zero, since
it is a relatively compact perturbation of the invertible operator
$\Delta-\lambda$, and zero is an isolated eigenvalue of finite multiplicity.
In order to simplify the notations and avoid the unnecessary technicalities,
we make two inessential extra assumptions. First, we assume that $0$ is
a {\it simple} eigenvalue, this is done in order to simplify the formulas
for  the corresponding spectral projectors, avoid Jordan blocks, etc.
Actually, Vishik has constructed $\bar u_0(x)$ which satisfies this extra
assumption, but it is not essential for our method. Second, we assume
that $\lambda>0$ is real just in order to avoid adding $\Ree$ to all
scalar products and to make all functions involved real-valued. This
assumption is also inessential, but makes the formulas slightly
simpler.
\par
The next lemma gives the key properties of direct and adjoint
eigenfunctions for the operator $\Cal L_0$.
\begin{lemma}\label{Lem1. ei} The adjoint operator $\Cal L_0^*$ has the form
%$$
\begin{equation}\label{1.adj}
\Cal L_0^*v=(\Delta-\lambda)v+P\(2(\bar u_0,\Nx)v+\bar u_0^\bot\Rot v\).
\end{equation}
%where $u^\bot:=(-u_2,u_1)$.
Let $\varphi_0$ and $\psi_0$ be the direct and adjoint eigenfunctions
 of  $\Cal L_0$, i.e.
%$$
\begin{equation}\label{1.eif}
\Cal L_0\varphi_0=\Cal L_0^*\psi_0=0,\ \ \ (\varphi_0,\psi_0)=1.
\end{equation}
Then, $\varphi_0,\psi_0\in C^\infty_\sigma(\R^2)\cap L^2_\sigma(\R^2)$,
possess the following estimates
%$$
\begin{equation}\label{1.dec}
|\varphi_0(x)|\le C(1+|x|)^{-3},\ \ \ |\psi_0(x)|\le C(1+|x|)^{-2}
\end{equation}
%$$
and the same is true for all its derivatives. Moreover, the following
representations hold
%$$
\begin{equation}\label{1.div}
\varphi_0=P\tilde\varphi_0,\ \ \psi_0=P\tilde\psi_0
\end{equation}
%$$
where the functions $\tilde\varphi_0$ and $\tilde\psi_0$ decay exponentially
as $|x|\to\infty$.
\end{lemma}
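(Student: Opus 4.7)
Compute $(\Cal L_0 u, v)$ for arbitrary $u, v \in L^2_\sigma(\R^2)$ and integrate by parts, using $\divv u = \divv \bar u_0 = 0$ to drop the Leray projector against the solenoidal $v$ and to move derivatives off of $u$. The convection term yields $-(u,(\bar u_0,\Nx)v)$ in a single step; the obstructive term is
\begin{equation*}
((u,\Nx)\bar u_0,v) = \int u_i v_j \partial_i(\bar u_0)_j\,dx = -\int u_i (\bar u_0)_j \partial_i v_j\,dx,
\end{equation*}
which is not a priori of the form $(u,(\bar u_0,\Nx)v)$. Decomposing $(\bar u_0)_j\partial_i v_j = (\bar u_0)_j\partial_j v_i + (\bar u_0)_j(\partial_i v_j - \partial_j v_i)$ and using the planar identity $(\bar u_0)_j(\partial_i v_j - \partial_j v_i) = \pm (\bar u_0^\perp)_i \rot v$ (a direct consequence of the definitions of $\rot$ and $\bar u_0^\perp$) together with the observation that $\Nx(\bar u_0\cdot v)$ is a pure gradient and is therefore killed by $P$, one collects the stated form \eqref{1.adj}.

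\textbf{Smoothness and pointwise decay.} Rewrite the eigenvalue equations as
\begin{equation*}
(\Dx - \lambda)\varphi_0 = Ph, \qquad (\Dx - \lambda)\psi_0 = -Pg,
\end{equation*}
where $h := \divv(\bar u_0\otimes\varphi_0 + \varphi_0\otimes\bar u_0)$ and $g := 2(\bar u_0,\Nx)\psi_0 + \bar u_0^\perp \rot\psi_0$ are both supported inside $\supp\bar u_0 \subset B_0^1$; interior elliptic regularity applied iteratively (using $\bar u_0\in C_0^\infty$) yields $\varphi_0,\psi_0\in C^\infty$. The core of the proof is the pointwise decay. Because $h$ is the divergence of a compactly supported tensor, $\int_{\R^2} h\,dx = 0$, and a two-term multipole expansion of the quadratically decaying Leray kernel $K_P$ therefore improves the generic decay by one order, giving $|Ph(x)|\le C(1+|x|)^{-3}$ for large $|x|$. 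No such cancellation is available for $g$, so only the generic rate $|Pg(x)|\le C(1+|x|)^{-2}$ is available. Convolving with the exponentially decaying Bessel kernel of $(\Dx-\lambda)^{-1}$ preserves these polynomial rates at infinity, producing the bounds in \eqref{1.dec}; differentiating the equations and iterating, using that $\partial^\alpha \bar u_0\in C_0^\infty$, yields the corresponding estimates for all derivatives.

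\textbf{Representation and normalization.} Since $\Dx - \lambda$ has a constant-coefficient Fourier symbol it commutes with $P$, so
\begin{equation*}
\varphi_0 = P(\Dx-\lambda)^{-1}h =: P\tilde\varphi_0, \qquad \psi_0 = -P(\Dx-\lambda)^{-1}g =: P\tilde\psi_0,
\end{equation*}
and the exponentially decaying kernel of $(\Dx-\lambda)^{-1}$ combined with compact support of $h$ and $g$ forces $\tilde\varphi_0,\tilde\psi_0$ to decay exponentially, proving \eqref{1.div}. The normalization $(\varphi_0,\psi_0)=1$ is realizable because simplicity of the eigenvalue $0$ of the Fredholm operator $\Cal L_0$ (both geometrically and algebraically) forces $(\varphi_0,\psi_0)\ne 0$. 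The main obstacle is conceptual rather than computational: one has to spot the integrable divergence structure of the direct nonlinearity $h$, which is absent from the adjoint source $g$, and which is precisely what produces the asymmetry between the $|x|^{-3}$ decay of $\varphi_0$ and the merely $|x|^{-2}$ decay of $\psi_0$. Once this is observed, the remainder reduces to standard pointwise estimates on $\Nx\circ P$ from Section~\ref{s0} and the exponentially decaying kernel of $(\Dx-\lambda)^{-1}$.
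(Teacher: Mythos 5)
Your proposal is correct and follows essentially the same route as the paper: integration by parts (with $\divv u=\divv\bar u_0=0$) for the adjoint formula \eqref{1.adj}, rewriting the eigenvalue equations as $(\Dx-\lambda)^{-1}P[\dots]$ with compactly supported sources, exploiting the divergence structure of the direct source versus its absence in the adjoint one to get the $|x|^{-3}$ vs.\ $|x|^{-2}$ asymmetry, and commuting $P$ with $(\Dx-\lambda)^{-1}$ to obtain \eqref{1.div}. The only cosmetic differences are that you derive the cubic decay of $Ph$ via the zero mean of $h$ and a Taylor (multipole) expansion of $K_P$, whereas the paper invokes the cubically decaying kernel of $\Nx\circ P$ (Lemma \ref{Lem0.Pd}) directly, and you do the adjoint computation in components rather than via the vector identities; the unresolved $\pm$ sign in your planar identity merely reflects the choice of orientation convention for $\Rot$ and $u^\perp$ and does not affect anything that follows.
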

\begin{proof}
%Indeed, formula for the adjoint operator
%can be verified in a straightforward way,
To prove~\eqref{1.adj}
 we use that for $a,b$ with $\divv a=\divv b=0$
$$
(a,\Nx b)=(b,\Nx a)-\Rot(a\times b),
$$
where $a\times b$ is a ``vertical'' vector $ a\times b=a\cdot
b^\perp=-a^\perp\cdot b $, and that
$$
(\Rot a,b)=(a,\Rot b).
$$
Using  also that
$
((a,\Nx b),c)=-((a,\Nx c),b)
$
we obtain %for the last two terms of the operator $\mathcal L$:
$$
\aligned
&((\bar u_0,\Nx)u,v)=-((\bar u_0,\Nx)v,u),\\
&((u,\Nx)\bar u_0,v)=((\bar u_0,\Nx)u,v)-(\Rot( u\times\bar u_0),v)\\=
%-((\bar u_0,\Nx)v,u)-(u\times \bar u_0,\Rot v)
&-((\bar u_0,\Nx)v,u)-(u,\bar u_0^\perp\Rot v),
\endaligned
$$
which gives~\eqref{1.adj}.

The smoothness of eigenfunctions follows from the smoothness of
$\bar u_0$ and the elliptic regularity. In order to verify
\eqref{1.dec} and \eqref{1.div}, we rewrite the equation for
$\varphi_0$ as follows:
\begin{equation}\label{fi0}
\varphi_0=(\Dx-\lambda)^{-1}P\divv(\bar u_0\otimes\varphi_0+\varphi_0\otimes\bar u_0)
\end{equation}
Since the function $\bar u_0\otimes\varphi_0+\varphi_0\otimes\bar u_0$ is localized and operators $P\divv$ and $(\Dx-\lambda)^{-1}$ have cubically and exponentially decaying kernels, this gives the cubic decay of the eigenfunction $\varphi_0$. Moreover, \eqref{1.div} also holds if we take $\tilde\varphi_0:=(\Dx-\lambda)^{-1}\divv(\bar u_0\otimes\varphi_0+\varphi_0\otimes\bar u_0)$.   Analogously, for the adjoint eigenfunction $\psi_0$, we have
$$
\psi_0=-(\Dx-\lambda)^{-1}P\(2(\bar u_0,\Nx)\psi_0+\bar u_0^\bot\Rot \psi_0\),
$$
so, in contrast to the direct eigenfunction, we have only the operator $P$ (without derivatives) which has quadratically decaying kernel only and, therefore, we are able to verify only the quadratic decay rate of the adjoint eigenfunctions. Finally, fixing $\tilde\psi_0:=-(\Dx-\lambda)^{-1}\(2(\bar u_0,\Nx)\psi_0+\bar u_0^\bot\Rot \psi_0\)$, we get \eqref{1.div} and finish the proof of the lemma.
\end{proof}
We now introduce the spectral projector
$$
\Pi_0w:=(w,\psi_0)\varphi_0=(w,\tilde\psi_0)\varphi_0,\ \ w\in L^2_\sigma(\R^2)
$$
and consider the auxiliary equation
%$$
\begin{equation}\label{1.au}
\Cal L_0w+\eb\Pi_0w=g.
\end{equation}
%$$
Then, in contrast to the main equation $\Cal L_0w=g$, this equation is solvable for all $g\in L^2_\sigma(\R^2)$ if $\eb\ne0$ and if we assume that $(g,\psi_0)=0$, which is the solvability condition for the main equation, then multiplication of \eqref{1.au} by $\psi_0$ gives $(w,\psi_0)=0$ and $w$ solves the main equation as well. For many reasons, it is more convenient for us to develop the perturbation theory for auxiliary equation \eqref{1.au}, which is uniquely solvable. The next lemma, which gives weighted estimates for the solution of \eqref{1.au} is one of the key technical tools for what follows.
\begin{lemma}\label{Lem1.we} Let $g\in L^2_\sigma(\R^2)$ satisfy $\supp g\subset B_0^1$. Then the solution $w$ of problem \eqref{1.au} possesses the following estimate:
%$$
\begin{equation}\label{1.reg1}
\|w\|_{H^2(B^1_y)}\le C\|g\|_{L^2}\theta(y),
\end{equation}
%$$
where $\theta(x):=(1+|x|)^{-3}$ and the constant $C$ depends on $\eb>0$, but is independent of $y\in\R^2$.
\par
Assume that now $g=P\divv G$, where $\supp G\subset B^1_0$. Then the analogue of estimate \eqref{1.reg1} holds:
 %$$
\begin{equation}\label{1.reg2}
\|w\|_{H^2(B^1_y)}\le C\|G\|_{H^1}\theta(y).
\end{equation}
%$$
\end{lemma}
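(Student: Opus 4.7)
The plan is to derive \eqref{1.reg1} by verifying that the right-hand side of the equation \eqref{1.au}, rewritten in resolvent form, lies in the space $L^2A^\infty_{\theta_0}$ (the space whose norm controls $\|u\|_{L^2(B^1_x)}\theta^{-1}(x)$ uniformly in $x$), so that Remark \ref{Rem0.lap} automatically places $w$ in $H^2A^\infty_{\theta_0}$ with the same control, which is precisely \eqref{1.reg1}. As a preliminary I establish invertibility of $\Cal L_0+\eb\Pi_0$ on $L^2_\sigma(\R^2)$: since $\Cal L_0$ is Fredholm of index zero with simple kernel $\mathrm{span}\{\varphi_0\}$ and cokernel $\mathrm{span}\{\psi_0\}$ normalized by $(\varphi_0,\psi_0)=1$, the direct-sum decomposition $L^2_\sigma=\mathrm{span}\{\varphi_0\}\oplus\ker\Pi_0$ is invariant under $\Cal L_0+\eb\Pi_0$, which acts as $\eb\cdot\mathrm{Id}$ on the first summand and as the bijection $\Cal L_0|_{\ker\Pi_0}$ on the second. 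This gives the basic global bound $\|w\|_{L^2}\le C(\eb)\|g\|_{L^2}$.

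Next I recast \eqref{1.au} as
$$
(\Dx-\lambda)w=g-\eb\Pi_0 w+P\divv(\bar u_0\otimes w+w\otimes\bar u_0)=:\Phi
$$
and estimate each term of $\Phi$ in $L^2A^\infty_{\theta_0}$. The source $g$ is supported in $B_0^1$, where $\theta^{-1}$ is bounded, giving $\|g\|_{L^2A^\infty_{\theta_0}}\le C\|g\|_{L^2}$. For the projector term, Lemma \ref{Lem1. ei} yields the pointwise bound $|\Pi_0 w(x)|\le\|w\|_{L^2}\|\psi_0\|_{L^2}|\varphi_0(x)|\le C\|g\|_{L^2}\theta(x)$, whence $\|\Pi_0 w\|_{L^2A^\infty_{\theta_0}}\le C\|g\|_{L^2}$. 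For the nonlinear term, $F:=\bar u_0\otimes w+w\otimes\bar u_0$ is supported in $B_0^1$ with $\|F\|_{L^2}\le C\|g\|_{L^2}$ since $\bar u_0\in C_0^\infty$; decomposing $P\circ\divv$ as a local operator plus an integral operator with cubically decaying kernel in exact analogy with the decomposition of $\Nx\circ P$ in Lemma \ref{Lem0.Pd}, and applying Lemma \ref{Lem0.point} to the nonlocal part, produces $\|P\divv F\|_{L^2A^\infty_{\theta_0}}\le C\|F\|_{L^2}$.

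Summing the three contributions gives $\|\Phi\|_{L^2A^\infty_{\theta_0}}\le C\|g\|_{L^2}$, so Remark \ref{Rem0.lap} places $w$ in $H^2A^\infty_{\theta_0}$ with the same control; unfolding the definition of that norm is precisely \eqref{1.reg1}. The second estimate \eqref{1.reg2} follows from the same argument with one modification: when $g=P\divv G$ and $G$ is supported in $B_0^1$, the source term is controlled by the same decomposition of $P\divv$ used for the nonlinear term, yielding $\|g\|_{L^2A^\infty_{\theta_0}}\le C\|G\|_{L^2}\le C\|G\|_{H^1}$. The principal technical obstacle I expect is the analogue of Lemma \ref{Lem0.Pd} for the composition $P\circ\divv$ acting on tensor-valued inputs: it must be shown by transposition and direct inspection of the kernel $K_P$ that the non-local piece has a kernel bounded by $C\theta(z)$, and the matching of that cubic tail with the weight $\theta_0$ in the $L^2A^\infty$-norm must be carried out via the key inequality \eqref{0.tineq}, exactly as in the proof of Lemma \ref{Lem0.point}.
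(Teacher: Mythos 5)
Your overall architecture coincides with the paper's proof: rewrite \eqref{1.au} in resolvent form $w=(\Dx-\lambda)^{-1}\Phi$, show that $\Phi$ decays cubically (i.e.\ lies in $L^2A^\infty_{\theta_0}$ with norm $\le C\|g\|_{L^2}$), and invoke the weighted regularity of $\Dx-\lambda$ from Remark \ref{Rem0.lap}; your solvability argument via the invariant splitting $L^2_\sigma=\operatorname{span}\{\varphi_0\}\oplus\ker\Pi_0$ is a correct (and more explicit) version of what the paper dismisses as ``immediate''. However, there is one genuine gap in your chain of inequalities. You only establish $\|w\|_{L^2}\le C\|g\|_{L^2}$ and then claim $\|P\divv F\|_{L^2A^\infty_{\theta_0}}\le C\|F\|_{L^2}$ for $F=\bar u_0\otimes w+w\otimes\bar u_0$ (and similarly $\|P\divv G\|_{L^2A^\infty_{\theta_0}}\le C\|G\|_{L^2}$ in the second case). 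This inequality is false as stated: in the decomposition $P\divv=N_1+N_2$ analogous to Lemma \ref{Lem0.Pd}, the nonlocal part $N_2$ indeed has a cubically decaying kernel and is controlled by $\|F\|_{L^2}$ (or even $\|F\|_{L^1}$), but the local part $N_1$ is a Calder\'on--Zygmund operator of order one (its kernel is $\nabla_z$ of a quadratically singular kernel plus a derivative of the delta), so for $x$ near the support of $F$ the quantity $\|N_1F\|_{L^2(B^1_x)}$ is controlled by $\|F\|_{H^1(B^2_x)}$, not by $\|F\|_{L^2}$; with $F$ merely in $L^2$, $P\divv F$ is only $H^{-1}$ near $B^1_0$ and does not belong to $L^2A^\infty_{\theta_0}$ at all.

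The missing ingredient is exactly the paper's first step: the non-weighted elliptic estimate $\|w\|_{H^2}\le C\|g\|_{L^2}$ (elliptic regularity for $\Cal L_0+\eb\Pi_0$, or equivalently for $\Dx-\lambda$ after moving the lower-order terms to the right-hand side and bootstrapping). With it, $\|F\|_{H^1}\le C\|\bar u_0\|_{C^1}\|w\|_{H^1}\le C\|g\|_{L^2}$, the local part of $P\divv F$ is handled, and your argument goes through verbatim; in the second case the source term must likewise be estimated through $\|G\|_{H^1}$ rather than $\|G\|_{L^2}$ (your final bound already carries the $H^1$-norm, so only the intermediate inequality needs correcting). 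So the proposal is salvageable with this standard addition, but as written the estimate of the nonlinear term (and of the source in case \eqref{1.reg2}) does not hold.
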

\begin{proof} Indeed, the unique solvability of equation \eqref{1.au} is immediate and the non-weighted $H^2$-estimate follows from the elliptic regularity, so we have
$$
\|w\|_{H^2}\le C\|g\|_{L^2}
$$
and the analogous estimate for the second case. We now write equation \eqref{1.au} in the form of
$$
w=(\Delta-\lambda)^{-1}(g+P\div(\bar u_0\otimes w+w\otimes \bar u_0)-\eb\Pi_0w).
$$
Since $\bar u_0$ has a compact support, the function
$$
F:=g+P\div(\bar u_0\otimes w+w\otimes \bar u_0)-\eb\Pi_0w
$$
 has the cubic decay rate, i.e.
$$
\|F\|_{L^2(B^1_y)}\le C\|g\|_{L^2}\theta(y),\ \ y\in\R^2
$$
and, therefore, the weighted regularity for the operator
$\Dx-\lambda$ gives the desired estimate \eqref{1.reg1}.
Estimate \eqref{1.reg2} can be verified analogously.
This finishes the proof of the lemma.
\end{proof}
\begin{remark}\label{Rem1.loc} Arguing analogously,
it is not difficult to establish slightly more general estimates
replacing the condition $\supp g\subset B^1_0$ by $\supp g\subset
B^1_{x_0}$. Then the function $\theta(y)$ in the right-hand side of
\eqref{1.reg1} should be replaced by $\theta(x_0-y)$, namely,
%$$
\begin{equation}\label{1.nnew}
\|w\|_{H^2(B^1_y)}\le
C\|g\|_{L^2}\theta(x_0-y).
\end{equation}
%$$
Indeed, to
prove this generalization, we first solve equation
$$
(\Dx-\lambda)V=g
$$
and due to the weighted regularity, obtain that
$$
\|V\|_{H^2(B^1_y)}\le C\|g\|_{L^2}\theta(y-x_0).
$$
After that, for the new function $W=w-V$, we have
$$
\Cal L_0W+\eb\Pi_0W=-\eb\Pi_0V+P\divv(\bar u_0\otimes V+V\otimes\bar u_0)=:\tilde g.
$$
Then
%, using that $\bar u_0$ has the finite support,
we get
$$
\|W\|_{H^2}\le C\|\tilde g\|_{L^2}\le  C\|g\|_{L^2}\theta(x_0).
$$
In fact, to see that the  inequality $\|\tilde g\|_{L^2}\le  C\|g\|_{L^2}\theta(x_0)$
holds we observe that $\Pi_0V=(V,\tilde\psi_0)\varphi_0$, since $\divv V=0$,
and therefore
\begin{multline*}
\|\Pi_0V\|_{L^2}\le \|\varphi_0\|_{L^2}\int_{\mathbb R^2}|V(y)||\tilde\psi_0(y)|dy\\
\le C\|g\|_{L^2}\int_{\mathbb R^2}\theta(y-x_0)\theta(y)dy
\le C'\|g\|_{L^2}\theta(x_0),
\end{multline*}
where we used the embedding $H^2(B^1_y)\hookrightarrow C(B^1_y)$ and
\eqref{0.tineq}.
\par
For the second term $U:=P\divv(\bar u_0\otimes V+V\otimes\bar u_0)$
we write
$$
|U(x)|\le C \int_{\mathbb R^2}\theta(x-y)|\bar u_0(y)||V(y)|dy
$$
and by Lemma~\ref{Lem0.point}, taking into account that
$\supp\bar u_0\subset B_0^1$
we obtain
$$
\|U\|_{L^2}\le C\|\bar u_0V\|_{L^2}\le C'\|V\|_{L^2(B_0^1)}\le
C_1\|g\|_{L^2}\theta(x_0).
$$
Arguing now as in the proof of the lemma, we infer that
$$
\|W\|_{H^2(B^1_y)}\le C\|g\|_{L^2}\theta(y)\theta(x_0)\le C\|g\|_{L^2}\theta(x_0-y).
$$
So, estimate \eqref{1.nnew} is proved. This estimate is important for establishing the spectral properties
of multi-vortices in weighted spaces.
\end{remark}
Let us now cut off the eigenfunctions $\varphi_0$ and $\psi_0$ as follows.
Let $L>0$ be a big number which will be fixed below, let
$0<\alpha<\beta<1$ be two fixed numbers and let $\Phi_L(x):=\Phi(x/L)$
 be a smooth function such that $\Phi(x)=1$ for $|x|\le\alpha$
 and $\Phi(x)=0$ for $|x|>\beta$. Finally, let
%$$
\begin{equation}
\tilde\psi_{0,L}:=\Phi_L\tilde\psi_0,\ \ \varphi_{0,L}:=\Nx^\bot(\Phi_L S_0),
\end{equation}
%$$
  where
  %$$
  \begin{equation}\label{1.s0}
  S_0:=-(\Dx-\lambda)^{-1}\Dx^{-1}\Rot P\div(\varphi_0\otimes\bar u_0+\bar u_0\otimes\varphi_0)
  \end{equation}
  %$$
is the stream function associated with the eigenfunction $\varphi_0$,
see \eqref{fi0} and \eqref{projection} with $\Omega=\mathbb R^2$.
The properties of these modified spectral functions are collected in
the following lemma.
\begin{lemma}\label{Lem1.cut} The functions $\varphi_{0,L},\tilde\psi_{0,L}$
and $\psi_{0,L}:=P\tilde\psi_{0,L}$ satisfy
%$$
\begin{equation}\label{1.uni}
|\varphi_{0,L}(x)|\le C\theta(x), \ \
|\tilde\psi_{0,L}(x)|\le Ce^{-\gamma|x|},\
\ |\psi_{0,L}(x)|\le C(1+|x|)^{-2}
\end{equation}
%$$
uniformly with respect to $L$ (i.e. positive constants $C$ and $\gamma$ are
independent of $L$) and the same is true for their derivatives. Moreover, the
following estimates hold:
%$$
\begin{equation}\label{1.cl}
|\varphi_0(x)-\varphi_{0,L}(x)|\le CL^{-1/2}\theta(x)^{-5/6},\ \
|\tilde\psi_0(x)-\tilde\psi_{0,L}(x)|\le Ce^{-\gamma|x|/2}
\end{equation}
%$$
and the same is true for the derivatives. Finally
%$$
\begin{equation}\label{1.app}
\|\Cal L_0\varphi_{0,L}\|_{L^2}+\|\Cal L_0^*\psi_{0,L}\|_{L^2}\le CL^{-1/2}.
\end{equation}
%$$
The $H^2$ analogue of the last estimate holds.
\end{lemma}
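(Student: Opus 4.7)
The proof is a direct calculation from the definitions, resting on three ingredients: the decay estimates of the preceding lemma for $\varphi_0$, $\tilde\psi_0$, and $\psi_0$; a quadratic decay estimate $|S_0(x)|\le C(1+|x|)^{-2}$ for the stream function; and routine commutator calculations involving the cut-off $\Phi_L$ on the annulus $\alpha L\le|x|\le\beta L$. The first task is to verify the decay of $S_0$. Starting from the representation $S_0=-(\Delta-\lambda)^{-1}\Delta^{-1}\Rot P\divv h$ with the compactly supported tensor $h:=\varphi_0\otimes\bar u_0+\bar u_0\otimes\varphi_0$, one observes that both the zeroth moment $\int\Rot(P\divv h)$ (trivially, as the curl of an integrable field) and the first moments (which reduce to $\int P\divv h$ and vanish because $\widehat{\divv h}(0)=0$ combined with the explicit form of the Fourier symbol of $P$) are zero. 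Expanding the Newton potential $\Delta^{-1}$ in two dimensions then yields quadratic decay of $\Delta^{-1}\Rot P\divv h$, and convolution with the exponentially decaying kernel of $(\Delta-\lambda)^{-1}$ preserves this, giving $|S_0(x)|\le C(1+|x|)^{-2}$ (and consequently $|\nabla S_0(x)|\le C(1+|x|)^{-3}$, since $\nabla_x^\bot S_0=\varphi_0$ is already known to be cubically decaying).

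The uniform pointwise bounds \eqref{1.uni} then follow from the Leibniz identity $\varphi_{0,L}=\Phi_L\varphi_0+(\nabla_x^\bot\Phi_L)S_0$: the first term is dominated by $|\varphi_0|\le C\theta(x)$, while the second, supported on $\alpha L\le|x|\le\beta L$, satisfies $|(\nabla_x^\bot\Phi_L)S_0|\le (C/L)(C/L^2)=CL^{-3}\le C\theta(x)$ on its support. For $\tilde\psi_{0,L}=\Phi_L\tilde\psi_0$ the exponential bound is immediate from $|\Phi_L|\le 1$ and the exponential decay of $\tilde\psi_0$. For $\psi_{0,L}=P\tilde\psi_{0,L}$, decompose $\psi_{0,L}=\psi_0-P((1-\Phi_L)\tilde\psi_0)$; the first piece has the quadratic decay $(1+|x|)^{-2}$ by the preceding lemma, and the second is bounded pointwise by integrating the quadratically decaying kernel $K_P$ against the exponentially small, exterior-supported remainder, contributing a term of the same order (and in fact exponentially smaller in $L$).

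The approximation estimates \eqref{1.cl} exploit the fact that $\varphi_0-\varphi_{0,L}=\nabla_x^\bot((1-\Phi_L)S_0)$ is supported in $\{|x|\ge\alpha L\}$. Expanding by Leibniz gives $(1-\Phi_L)\varphi_0+(\nabla_x^\bot(1-\Phi_L))S_0$; on the support we have $\theta(x)=(1+|x|)^{-3}\le L^{-1/2}(1+|x|)^{-5/2}=L^{-1/2}\theta(x)^{5/6}$, which controls the first term, while the commutator piece is of order $L^{-3}$ on the annulus by the argument used above and is trivially dominated by $CL^{-1/2}\theta(x)^{5/6}$ there. For $\tilde\psi_0-\tilde\psi_{0,L}=(1-\Phi_L)\tilde\psi_0$, one splits $e^{-\gamma|x|}=e^{-\gamma|x|/2}\cdot e^{-\gamma|x|/2}$ and absorbs one factor using $|x|\ge\alpha L$.

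For \eqref{1.app} I would exploit $\mathcal L_0\varphi_0=0$, which yields $\mathcal L_0\varphi_{0,L}=-\mathcal L_0(\varphi_0-\varphi_{0,L})$. For $L>1/\alpha$ the supports of $\varphi_0-\varphi_{0,L}$ and $\bar u_0$ are disjoint, so the nonlinear part of $\mathcal L_0$ vanishes identically and $\mathcal L_0\varphi_{0,L}=-\nabla_x^\bot(\Delta-\lambda)((1-\Phi_L)S_0)$. Expanding
\[
(\Delta-\lambda)((1-\Phi_L)S_0)=(1-\Phi_L)(\Delta-\lambda)S_0-2\nabla\Phi_L\cdot\nabla S_0-(\Delta\Phi_L)S_0
\]
and recalling that $(\Delta-\lambda)S_0=-\Delta^{-1}\Rot P\divv h$ also has quadratic decay by the same moment argument, each piece (after differentiation by $\nabla_x^\bot$) is estimated in $L^2$ by integration either over the annulus of area $\sim L^2$ or over the exterior $|x|\ge\alpha L$, both yielding the stated $L^{-1/2}$ rate (which is in fact a generous upper bound). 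The estimate for $\mathcal L_0^*\psi_{0,L}$ is analogous and, thanks to the exponential decay of $\tilde\psi_0$, is actually exponentially small in $L$; the $H^2$ analogues follow by applying elliptic regularity to the residuals. The main obstacle throughout is tracking the quadratic decay of $S_0$ and $\psi_0$ through the chain of integral operators: since the Leray projector $P$ is not bounded on $L^\infty$ or on weighted spaces whose weight decays slower than $\theta$, one is forced to rely at every stage on the specific vanishing-moment structure of the compactly supported source $h$ and of the corresponding tensor for the dual eigenfunction.
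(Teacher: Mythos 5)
Your argument is correct and follows the same overall strategy as the paper: decompose $\varphi_{0,L}=\Phi_L\varphi_0+(\nabla_x^\bot\Phi_L)S_0$, reduce everything to the quadratic decay of the stream function $S_0$, use $|\nabla\Phi_L|\le CL^{-1}$ on the annulus $\alpha L\le|x|\le\beta L$, and deduce \eqref{1.cl} from $|1-\Phi_L(x)|\lesssim L^{-1/2}(1+|x|)^{1/2}$ (your reading of the exponent in \eqref{1.cl} as $\theta^{5/6}$, i.e. $(1+|x|)^{-5/2}$, is the intended one). Where you genuinely diverge is in two sub-steps. First, for the bound $|S_0(x)|\le C(1+|x|)^{-2}$ the paper simply invokes that $\Dx^{-1}\Rot P\divv$ is a Zygmund--Calder\'on operator of order zero, hence has a quadratically decaying kernel, while you derive it by hand from the vanishing of the zeroth and first moments of $\Rot P\divv h$ plus an expansion of the logarithmic potential. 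Your moment computations are correct, but as written the expansion needs the source to decay fast enough: the kernel of $\Rot P\divv$ by itself only gives $(1+|x|)^{-4}$ decay, for which the second-order remainder $\int|y|^2|w(y)|\,dy$ is logarithmically divergent and would pollute \eqref{1.uni} by a $\ln L$. This is repaired by one observation you omit, namely $\Rot P=\Rot$ (the curl annihilates the gradient part of the Leray decomposition), so $\Rot P\divv h=\Rot\divv h$ is compactly supported and the expansion gives clean quadratic decay; alternatively one can fall back on the paper's kernel argument. Second, for \eqref{1.app} the paper just writes $\Cal L_0\varphi_{0,L}=\Cal L_0(\varphi_{0,L}-\varphi_0)$ and uses boundedness of $\Cal L_0,\Cal L_0^*:H^2\to L^2_\sigma$ together with the $H^2$ version of \eqref{1.cl}, whereas you exploit the disjointness of $\supp(\varphi_0-\varphi_{0,L})$ and $\supp\bar u_0$ to reduce to explicit commutator terms of the pure Laplacian, which actually yields a rate better than $L^{-1/2}$ (and, as you note, exponential smallness for the adjoint part); this buys sharper constants at the cost of a slightly longer computation, but both routes suffice for the stated $L^{-1/2}$ bound.
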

\begin{proof} Indeed, the second estimate of \eqref{1.uni} is obvious
and the third one follows from the second one. Let us verify the first one.
By the definition of $\varphi_{0,L}$
$$
\varphi_{0,L}=\Phi_L\varphi_0+\nabla^\bot\Phi_L S_0.
$$
For the first term in the right-hand side we have the desired estimate
since $\varphi_0$ has the cubic decay rate. For the second term, $S_0$ satisfies
%$$
\begin{equation}\label{1.sest}
|S_0(x)|\le C(1+|x|)^{-2}.
\end{equation}
%$$
Indeed, this follows from \eqref{1.s0} and from the fact
that $\Dx^{-1}\Rot P\divv$ is a Zygmund--Calderon operator
and therefore has a quadratically decaying kernel. Finally,
since $\nabla\Phi_L$ is of order $L^{-1}$ and $\nabla^\bot\Phi_L(x)$ is
non-zero only if $|x|\sim L$, we may write
%$$
\begin{equation}\label{1.pL}
|\nabla\Phi_L(x)|\le CL^{-1}\sim C(1+|x|)^{-1}\sim L^{-1/2}(1+|x|)^{-1/2}
\end{equation}
%$$
and this finishes the proof of \eqref{1.uni}.
\par
Let us verify the closeness estimates \eqref{1.cl}. Again, the second one is
obvious and for the first one we write
$$
\varphi_0(x)-\varphi_{0,L}(x)=(1-\Phi_L)\varphi_0(x)-\nabla^\perp\Phi_L(x) S_0(x).
$$
The desired estimate for the second term in the right-hand side is already
obtained and for the first term we use that $1-\Phi_L$ vanishes for $|x|<\alpha L$,
so
$$
|1-\Phi_L(x)|\le (1+|x|)^{-1/2}(1+|x|)^{1/2}\le CL^{-1/2}(1+|x|)^{1/2}
$$
and this finishes the proof of \eqref{1.cl}.
\par
Finally, estimates \eqref{1.app} follow from the fact that
operators $\Cal L_0$ and $\Cal L_0^*$ are bounded as operators
from $H^2$ to $L^2_\sigma$, estimates \eqref{1.cl} and the fact
that $P$ is bounded from $H^2$ to $H^2$. The $H^2$-analogue follows
from the fact that the modified eigenfunctions are smooth and their
derivatives also satisfy the decaying estimates.
This finishes the proof of the lemma.
\end{proof}
\begin{remark}\label{Rem1.app-loc} Arguing as in Remark \ref{Rem1.loc}, we may
localize the first estimate of \eqref{1.app}, namely,
%$$
\begin{equation}\label{1.ap-loc}
\|\Cal L_0\varphi_{0,L}\|_{H^s(B^1_x)}\le C_sL^{-1/2}\theta^{5/6}(x).
\end{equation}
%$$
for all $s\ge0$. Crucial for us is that the weight $\theta^{5/6}$ is still
integrable, so no problems with estimates in uniformly local spaces as with
summing infinitely many spatially localized terms like this  will arise.
\par
In contrast to this, surprisingly, the structure of the adjoint operator
$\Cal L_0^*$ is worse since it contains the operator $P$ which is not
compensated by extra differentiation, so we do not know whether this
operator is well-posed in uniformly local spaces and this produces extra
difficulties to fight with.
\end{remark}
To conclude this section, we introduce the localized projector
%$$
\begin{equation}\label{1.ppL}
\Pi_{0,L}u:=\alpha_L(u,\tilde\psi_{0,L})\varphi_{0,L},\ \
\alpha_L:=(\tilde\psi_{0,L},\varphi_{0,L})^{-1}.
\end{equation}
%$$
\begin{corollary}\label{Cor1.dpr} The projectors $\Pi_0$ and $\Pi_{0,L}$ satisfy
%$$
\begin{equation}\label{1.dpr}
\|\Pi_{0,L}u-\Pi_0u\|_{L^2}\le CL^{-1/2}\|u\|_{L^2},
\end{equation}
%$$
where the constant $C$ is independent of $L$.
\end{corollary}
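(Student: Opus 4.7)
The plan is to expand the difference $\Pi_{0,L}u - \Pi_0 u$ algebraically into three pieces and bound each one using the quantitative estimates already collected in Lemma \ref{Lem1.cut}. As a preliminary normalization check, I first observe that $(\tilde\psi_0,\varphi_0) = (\psi_0,\varphi_0) = 1$: this is because $\varphi_0 \in L^2_\sigma(\R^2)$, so $P\varphi_0 = \varphi_0$, and $P$ is self-adjoint on $L^2$. Consequently $\Pi_0 u$ can be written in the form $(u,\tilde\psi_0)\varphi_0$, making the comparison with $\Pi_{0,L}u = \alpha_L(u,\tilde\psi_{0,L})\varphi_{0,L}$ transparent.

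The first step is to show that $|\alpha_L - 1|\le CL^{-1/2}$. For this I would write
\[
(\tilde\psi_{0,L},\varphi_{0,L}) - 1 = (\tilde\psi_{0,L}-\tilde\psi_0,\varphi_{0,L}) + (\tilde\psi_0,\varphi_{0,L}-\varphi_0).
\]
The first inner product is in fact exponentially small in $L$, since $\tilde\psi_{0,L}-\tilde\psi_0$ is supported where $|x|\ge \alpha L$ and $\tilde\psi_0$ itself decays exponentially, while $\varphi_{0,L}$ is bounded pointwise by $C\theta(x)$ by \eqref{1.uni}. The second inner product is $O(L^{-1/2})$, obtained by pairing the exponentially decaying factor $\tilde\psi_0$ against the pointwise bound $|\varphi_0-\varphi_{0,L}|\le CL^{-1/2}\theta(x)^{5/6}$ of \eqref{1.cl}; the resulting integrand is integrable uniformly in $L$.

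The second step is to decompose
\[
\Pi_{0,L}u - \Pi_0 u = (\alpha_L-1)(u,\tilde\psi_{0,L})\varphi_{0,L} + (u,\tilde\psi_{0,L}-\tilde\psi_0)\varphi_{0,L} + (u,\tilde\psi_0)(\varphi_{0,L}-\varphi_0),
\]
and apply the Cauchy--Schwarz inequality to each term. The pointwise bounds \eqref{1.uni} imply that $\|\tilde\psi_{0,L}\|_{L^2}$, $\|\tilde\psi_0\|_{L^2}$ and $\|\varphi_{0,L}\|_{L^2}$ are all bounded independently of $L$. The exponential localization of the tail of $\tilde\psi_0$ combined with \eqref{1.cl} makes $\|\tilde\psi_{0,L}-\tilde\psi_0\|_{L^2}$ exponentially small in $L$. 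Finally, $\|\varphi_{0,L}-\varphi_0\|_{L^2}$ is $O(L^{-1/2})$, since the weight $\theta^{5/6}=(1+|x|)^{-5/2}$ is square integrable in $\R^2$. Together with Step 1, these bounds show that each of the three pieces is of order $L^{-1/2}\|u\|_{L^2}$, which is exactly the claim.

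I do not anticipate a genuine obstacle here; the argument is bookkeeping, and everything is already packaged in Lemma \ref{Lem1.cut}. The only point that deserves care is reading the first estimate of \eqref{1.cl} as $|\varphi_0-\varphi_{0,L}|\le CL^{-1/2}(1+|x|)^{-5/2}$, i.e.\ with the integrable tail that actually comes out of the proof (from multiplying the cubic decay of $\varphi_0$ by the cut-off factor $(1-\Phi_L)$ which is only active for $|x|\gtrsim L$, and from the smaller contribution of $\nabla^\perp\Phi_L\,S_0$). Once this integrability is acknowledged, every scalar product in sight converges and the $L^{-1/2}$ rate is preserved throughout.
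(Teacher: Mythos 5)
Your proof is correct and follows exactly the route the paper intends, since the paper's own argument is just the remark that \eqref{1.dpr} is immediate from \eqref{1.uni} and \eqref{1.cl}; your three-term decomposition together with the estimate $|\alpha_L-1|\le CL^{-1/2}$ is precisely the bookkeeping that remark leaves implicit. You are also right to read the first estimate of \eqref{1.cl} with the decaying weight $\theta(x)^{5/6}=(1+|x|)^{-5/2}$ (the printed exponent $-5/6$ is a sign slip, as the proof of Lemma \ref{Lem1.cut} shows), which is what makes all the scalar products converge uniformly in $L$.
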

Indeed, this estimate is an immediate corollary of \eqref{1.uni} and \eqref{1.cl}.
\par
We introduce now the localized version of equation \eqref{1.au}
%$$
\begin{equation}\label{1.auL}
\Cal L_0w+\eb \Pi_{0,L}=g.
\end{equation}
%$$
Then, the following result holds.
\begin{corollary}\label{Lem1.auL} For every $\eb>0$ there exists $L_0$
such that problem \eqref{1.auL} is uniquely solvable for every $L>L_0$ and
%$$
\begin{equation}
\|w\|_{H^2}\le C\|g\|_{L^2},
\end{equation}
%$$
where $C$ is independent of $L$. Moreover the  analogues of \eqref{1.reg1} and
\eqref{1.reg2} as well as estimates from Remark \ref{Rem1.loc} hold.
\end{corollary}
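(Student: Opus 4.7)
The plan is to treat the localized equation \eqref{1.auL} as a small perturbation of the auxiliary equation \eqref{1.au}, exploiting the smallness of $\Pi_{0,L}-\Pi_0$ provided by Corollary \ref{Cor1.dpr}. Let $T_\eb := \Cal L_0 + \eb\Pi_0$. By Lemma \ref{Lem1.we} (applied without the support restriction, i.e.\ just as an elliptic regularity / Fredholm statement for $\Cal L_0$ plus the one-dimensional correction $\eb\Pi_0$), the operator $T_\eb: H^2_\sigma(\R^2)\to L^2_\sigma(\R^2)$ is an isomorphism. Substituting $w = T_\eb^{-1}h$ into \eqref{1.auL} reduces it to
\begin{equation*}
h + \eb(\Pi_{0,L}-\Pi_0)T_\eb^{-1}h = g
\end{equation*}
in $L^2_\sigma(\R^2)$. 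By Corollary \ref{Cor1.dpr}, the operator $\eb(\Pi_{0,L}-\Pi_0)T_\eb^{-1}$ has $L^2$-operator norm bounded by $\eb C L^{-1/2}\|T_\eb^{-1}\|_{L^2\to H^2}$, so for $L>L_0(\eb)$ sufficiently large, a Neumann series converges and yields a unique $h\in L^2_\sigma$ with $\|h\|_{L^2}\le 2\|g\|_{L^2}$. Then $w = T_\eb^{-1}h$ is the unique solution and satisfies $\|w\|_{H^2}\le C\|g\|_{L^2}$ with $C$ independent of $L$.

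To recover the weighted analogue of \eqref{1.reg1}, assume $\supp g\subset B^1_0$. Having secured $\|w\|_{L^2}\le C\|g\|_{L^2}$ uniformly in $L$, I rewrite \eqref{1.auL} as
\begin{equation*}
w = (\Dx-\lambda)^{-1}\bigl(g + P\divv(\bar u_0\otimes w + w\otimes\bar u_0) - \eb\Pi_{0,L}w\bigr)
\end{equation*}
and estimate the right-hand side in $L^2_{\theta_y}$. The term $g$ has support in $B^1_0$ so contributes $\le C\|g\|_{L^2}\theta(y)$; the term $\bar u_0\otimes w + w\otimes\bar u_0$ has support in $B^1_0$ since $\supp\bar u_0\subset B^1_0$, and is bounded in $L^2$ by $C\|w\|_{L^2}\le C'\|g\|_{L^2}$, so after $P\divv$ the cubically decaying kernel (Lemma \ref{Lem0.Pd}) gives a contribution $\le C\|g\|_{L^2}\theta(y)$; the term $\Pi_{0,L}w = \alpha_L(w,\tilde\psi_{0,L})\varphi_{0,L}$ has $|(w,\tilde\psi_{0,L})|\le \|w\|_{L^2}\|\tilde\psi_{0,L}\|_{L^2}\le C\|g\|_{L^2}$ uniformly in $L$ by \eqref{1.uni}, while $\varphi_{0,L}(x)$ decays cubically uniformly in $L$ by \eqref{1.uni}, so this term is also $\le C\|g\|_{L^2}\theta(y)$ in $L^2(B^1_y)$-norm. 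Applying the weighted regularity of $\Dx-\lambda$ (Remark \ref{Rem0.lap}) finishes the proof of \eqref{1.reg1}. The case $g = P\divv G$ with $\supp G\subset B^1_0$ is handled by writing $g = P\divv G$ and using the same scheme with the $H^1$-norm of $G$ in place of the $L^2$-norm of $g$, exactly as in Lemma \ref{Lem1.we}.

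For the localized version of Remark \ref{Rem1.loc}, where $\supp g\subset B^1_{x_0}$, I split $w = V+W$, first solving $(\Dx-\lambda)V = g$ (which yields $\|V\|_{H^2(B^1_y)}\le C\|g\|_{L^2}\theta(y-x_0)$ by Remark \ref{Rem0.lap}), and then deriving an equation for $W$ of the form $(\Cal L_0+\eb\Pi_{0,L})W = \tilde g$ where $\tilde g$ is controlled uniformly in $L$ by $\|g\|_{L^2}\theta(x_0)$, using the $L^2$-control of $\Pi_{0,L}V$ (which uses the uniform cubic bound on $\varphi_{0,L}$ and $|(V,\tilde\psi_{0,L})|\le C\|g\|_{L^2}\theta(x_0)$ coming from \eqref{0.tineq} as in Remark \ref{Rem1.loc}) and of the nonlinear commutator term $P\divv(\bar u_0\otimes V+V\otimes\bar u_0)$. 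The uniform-in-$L$ bound just obtained for \eqref{1.auL} is applied to $W$, and combining the two pieces via $\theta(y)\theta(x_0)\le C\theta(x_0-y)$ yields the desired weighted estimate.

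The main obstacle is uniformity of all constants in $L$. This is precisely what Lemma \ref{Lem1.cut} (cubic decay of $\varphi_{0,L}$ and exponential decay of $\tilde\psi_{0,L}$ uniform in $L$) and Corollary \ref{Cor1.dpr} are designed to deliver; the former is what makes the weighted inhomogeneities in the representation formula for $w$ $L$-independent, while the latter is what allows the Neumann inversion at the abstract level. Once these uniform ingredients are in place, the perturbative structure of the argument, together with the local support of $\bar u_0$, carries everything through without any new difficulty.
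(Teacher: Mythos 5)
Your argument is correct and follows essentially the same route as the paper: unique solvability and the uniform $H^2$ bound by treating $\eb(\Pi_{0,L}-\Pi_0)$ as a small ($O(L^{-1/2})$, Corollary \ref{Cor1.dpr}) perturbation of the invertible operator $\Cal L_0+\eb\Pi_0$, and then the weighted estimates by repeating the representation-formula argument of Lemma \ref{Lem1.we} and Remark \ref{Rem1.loc} with the uniform-in-$L$ decay of $\varphi_{0,L}$, $\tilde\psi_{0,L}$ from Lemma \ref{Lem1.cut}. The only detail worth adding is the uniform (in large $L$) boundedness of $\alpha_L=(\tilde\psi_{0,L},\varphi_{0,L})^{-1}$, which follows at once from \eqref{1.cl} since $(\tilde\psi_{0,L},\varphi_{0,L})\to(\tilde\psi_0,\varphi_0)=1$.
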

Indeed, the unique solvability in the non-weighted spaces follows
from the analogous result for equation \eqref{1.au}, estimate \eqref{1.dpr}
by perturbation arguments. The weighted estimates can be proved exactly
as in Lemma~\ref{Lem1.we}.

\section{Multi-vortices and solvability of the auxiliary equation:
the case of the whole plane}\label{s2}
In this section, we introduce the multi-vortex profile for the case
of $\Omega=\R^2$ and verify the unique solvability of the corresponding
auxiliary equation. We first take a big number $L$ and introduce the
finite or infinite set $\Xi\subset L\mathbb Z^2$ of vortex centers.
Let $\bar u_0$ be a Vishik vortex satisfying all properties stated in
the previous section. Then, the multi-vortex profile is defined as
follows
%$$
\begin{equation}\label{2.m}
\bar u_\Xi(x):=\sum_{\xi_j\in\Xi}\bar u_j(x),\ \ \bar u_j(x):=\bar u_0(x-\xi_j).
\end{equation}
%$$
Since the supports of $\bar u_j$ do not intersect, this profile
satisfies the stationary Navier--Stokes equation
%$$
\begin{equation}\label{2.ns}
-\Dx \bar u_\Xi+P\divv(\bar u_\Xi\otimes\bar u_\Xi)=f_\Xi:=\sum_{\xi_j\in\Xi}f_j,
 \ \ f_j(x):=f_0(x-\xi_j).
\end{equation}
%$$
We consider the linearization of the NS equations on
this velocity profile and introduce the following operator
%$$
\begin{equation}\label{2.lin}
\Cal L_\Xi w:=(\Dx-\lambda)w-P\divv(\bar u_\Xi\otimes w+w\otimes\bar u_\Xi).
\end{equation}
%$$
Our main idea is to prove that this operator has at least $\#\Xi$ eigenvalues
near zero if $L$ is large enough. To this end, following the general strategy
of \cite{MiZ}, we introduce the projector
%$$
\begin{equation}\label{2.pro}
\Pi_{\Xi,L}w:=\alpha_L\sum_{\xi_j\in\Xi}(w,\tilde\psi_{j,L})\varphi_{j,L},
\end{equation}
%$$
where $\tilde\psi_{j,L}(x):=\tilde\psi_{0,L}(x-\xi_j)$ and
$\varphi_{j,L}(x):=\varphi_{0,L}(x-\xi_j)$, and consider the auxiliary equation
%$$
\begin{equation}\label{2.au}
\Cal L_{\Xi}w+\eb\Pi_{\Xi,L}w=g.
\end{equation}
%$$
The main result of this section is the following theorem.
\begin{theorem}\label{Th2.main} Let $\eb>0$ be small enough. Then, for
sufficiently big $L$,  problem \eqref{2.au} is uniquely solvable for any
$g\in L^2_\sigma(\R^2)$ and the following estimate holds
%$$
\begin{equation}\label{2.main}
\|w\|_{H^2}\le C\|g\|_{L^2}
\end{equation}
%$$
uniformly with respect to $L\to\infty$. Moreover,
the analogous estimate holds in $L^2_{b,\sigma}(\R^2)$,
 $L^2_{\theta_{x_0},\sigma}$ and $L^2A^\infty_{\theta^{5/6}}(\R^2)$ as well.
\end{theorem}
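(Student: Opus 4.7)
The plan is to construct a parametrix for $\Cal L_\Xi+\eb\Pi_{\Xi,L}$ by gluing solutions of the translated single-vortex auxiliary problems of Corollary~\ref{Lem1.auL}, and then to upgrade it to a true inverse by a Neumann series. The decisive geometric fact is that $\bar u_0$ is compactly supported in $B^1_0$ while $|\xi_i-\xi_j|\ge L\gg 1$, so the supports of the bumps $\bar u_j$ are \emph{pairwise disjoint}; consequently, near each vortex centre the operator $\Cal L_\Xi$ literally coincides with a translated single-vortex operator $\Cal L_j:=(\Dx-\lambda)-P\divv(\bar u_j\otimes\cdot+\cdot\otimes\bar u_j)$. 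All long-range coupling between vortices enters only through (a) the nonlocal Leray projector $P$ and (b) the off-diagonal pairings in $\Pi_{\Xi,L}$ between eigenfunctions at distinct centres, both of which are quantitatively tamed by the weighted-space machinery of Section~\ref{s0}.

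Concretely, I would fix a smooth partition of unity $\{\chi_j\}_{\xi_j\in\Xi}\cup\{\chi_\infty\}$ subordinate to the lattice, with $\supp\chi_j\subset B^{L/4}_{\xi_j}$, and decompose $g$ into divergence-free pieces $g_j$ essentially localized near $\xi_j$ (most naturally by applying $\chi_j$ at the level of the stream function $-\Dx^{-1}\Rot g$ and then taking $\Nx^\bot$, which yields a true decomposition $g=\sum_j g_j+g_\infty$). For each $j$ solve the translated auxiliary equation via Corollary~\ref{Lem1.auL}, producing $w_j$ that satisfies, by the localized version of Remark~\ref{Rem1.loc},
\begin{equation*}
\|w_j\|_{H^2(B^1_y)}\le C\,\theta(y-\xi_j)\,\|g_j\|_{L^2}.
\end{equation*}
Solve separately $(\Dx-\lambda)w_\infty=g_\infty$ using Remark~\ref{Rem0.lap}, and set $\tilde w:=\sum_j w_j+w_\infty$. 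The discrete Hausdorff--Young estimate of Lemma~\ref{Lem1.disc} applied to the coefficients $\|g_j\|_{L^2}$ then delivers $\|\tilde w\|_{H^2}\le C\|g\|_{L^2}$ uniformly in $L$ and $\#\Xi$.

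The residual $r:=(\Cal L_\Xi+\eb\Pi_{\Xi,L})\tilde w-g$ splits into three kinds of error. \emph{(i)} The \emph{nonlinear mismatch} $(\Cal L_\Xi-\Cal L_j)w_j=-P\divv\sum_{i\ne j}(\bar u_i\otimes w_j+w_j\otimes\bar u_i)$: since $\supp\bar u_i\subset B^1_{\xi_i}$ and $w_j$ has cubic weighted decay from $\xi_j$, each term is bounded by $C\,\theta(\xi_i-\xi_j)\,\|g_j\|_{L^2}$, and summation via Lemma~\ref{Lem0.sum} yields an $O(L^{-3})$ contribution to $\|r\|_{L^2}$. \emph{(ii)} The \emph{projector cross-terms} $\alpha_L(w_j,\tilde\psi_{i,L})\varphi_{i,L}$ with $i\ne j$: the exponential localization of $\tilde\psi_{i,L}$ (Lemma~\ref{Lem1.cut}) paired against the cubic weighted decay of $w_j$ makes each inner product at most $O(L^{-3})$. \emph{(iii)} The \emph{partition-of-unity commutators} coming from the decomposition of $g$: rewriting the mismatch through $\Nx\circ P=N_1+N_2$ (Lemma~\ref{Lem0.Pd}) and integrating by parts routes everything through the cubically decaying kernel $N_2$, producing a further $O(L^{-\gamma})$ contribution via Lemma~\ref{Lem0.point}. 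For $L$ large we obtain $\|r\|_{L^2}\le\tfrac12\|g\|_{L^2}$, so a Neumann series produces the true right inverse with the stated $H^2$ bound; uniqueness follows by applying the same parametrix to the homogeneous equation.

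The principal obstacle is the extension to the weighted and uniformly local spaces $L^2_{\theta_{x_0}}$, $L^2_b$, and $L^2A^\infty_{\theta^{5/6}}$, since the Leray projector $P$ itself is \emph{not} bounded on any of them. The remedy is to route every occurrence of $P$ through the decomposition $\Nx\circ P=N_1+N_2$ (Lemma~\ref{Lem0.Pd} and Remark~\ref{Rem0.Pgood}): $N_1$ is local and hence preserves weighted and uniformly local norms by Lemma~\ref{Lem0.eq}, while $N_2$ has a cubically decaying kernel amenable to the weighted Hausdorff--Young bounds of Lemmata~\ref{Lem0.point} and~\ref{Lem1.disc}. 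The slight weight loss $\theta^{5/6}$ appearing in the $L^2A^\infty$ statement is precisely the one flagged in Remark~\ref{Rem1.app-loc}; it is harmless because $\theta^{5/6}$ remains integrable, so the sums over $\Xi$ converge uniformly in $L$. With these substitutions the parametrix construction and the three residual estimates transfer verbatim to each of the three function spaces, completing the proof.
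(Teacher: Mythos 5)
Your overall strategy (superpose translated single-vortex auxiliary solutions, show the interaction errors are $O(L^{-\gamma})$, and iterate) is the same one the paper uses, but two of your steps contain genuine gaps. First, your reduction hinges on decomposing an arbitrary $g\in L^2_\sigma(\R^2)$ into divergence-free pieces $g_j$ localized near the centres by cutting off the stream function $-\Dx^{-1}\Rot g$. This decomposition is not free: the stream function of an $L^2$ (or $L^2_b$) field is only defined up to constants and is not controlled in $L^2$ by $\|g\|_{L^2}$ (it is BMO-type with logarithmic growth), so the commutator terms $\nabla^\bot\chi_j\, S$ and the bookkeeping of the per-ball normalizing constants require a real argument, precisely the kind of difficulty the paper only confronts in the uniqueness step, where it costs the Fefferman--Stein/BMO estimate \eqref{2.tS} and a $\ln L$ loss. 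The paper avoids this decomposition altogether: it first solves $(\Dx-\lambda)W+\eb\Pi_{\Xi,L}W=\hat g$ for the \emph{whole} non-localized part of the data (Corollary \ref{Cor2.lin}), and then observes that the leftover coupling $\sum_i(\bar u_i\otimes W+W\otimes\bar u_i)$ is automatically a sum of terms supported in the unit balls $B^1_{\xi_i}$, which is exactly the form $P\divv\tilde g$ required by the single-vortex weighted estimate \eqref{1.reg2}. Relatedly, your claimed bound $\|w_j\|_{H^2(B^1_y)}\le C\theta(y-\xi_j)\|g_j\|_{L^2}$ is false for data spread over $B^{L/4}_{\xi_j}$: Remark \ref{Rem1.loc} gives the shifted cubic decay only for unit-ball-supported data, and for $y$ near the edge of $\supp g_j$ the solution is of size $\|g_j\|_{L^2}$, not $\theta(L/4)\|g_j\|_{L^2}$. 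The cross-vortex residuals can probably still be salvaged with a convolution-type bound, but as written the estimate feeding your $O(L^{-3})$ bookkeeping is incorrect.

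Second, ``uniqueness follows by applying the same parametrix to the homogeneous equation'' is not a proof. Your Neumann series produces a \emph{right} inverse, which gives existence but not injectivity; to conclude $Z=0$ from $(\Cal L_\Xi+\eb\Pi_{\Xi,L})Z=0$ you would need a left parametrix built from a decomposition of the unknown $Z$ itself, which is not what your residual estimates control. For finitely many vortices one can fall back on the Fredholm index-zero argument, but the theorem asserts unique solvability for arbitrary (possibly infinite) $\Xi\subset L\Bbb Z^2$ and in $L^2_{b,\sigma}$, where the operator is not Fredholm. The paper needs a separate and substantial argument here (Lemma \ref{Lem2.unique}): pick the vortex where the putative kernel element $Z$ is essentially attained, cut off its stream function with the BMO logarithmic estimate, show the truncation approximately solves the single-vortex auxiliary equation with an $O(L^{-1/2})$ right-hand side, and derive a contradiction with Corollary \ref{Lem1.auL}. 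Your proposal contains no substitute for this step. The weighted/uniformly-local extensions via $\Nx\circ P=N_1+N_2$ and the $\theta^{5/6}$ loss are in line with what the paper does, but they rest on the two unproved steps above.
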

\begin{proof} We split the proof in a number of lemmata.
\begin{lemma}\label{Lem2.pro} The operator $\Pi_{\Xi,L}$ is a bounded
operator in $L^2_\sigma$. Moreover. the following estimate holds:
%$$
\begin{equation}\label{2.pl2}
\|\Pi_{\Xi,L}w\|_{L^2}\le C\|w\|_{L^2}
\end{equation}
%$$
and the constant $C$ is independent of $L$ and $\Xi$. The analogous
estimates for the other spaces mentioned in the theorem also hold.
\end{lemma}
\begin{proof} Indeed, according to \eqref{1.uni},
%$$
\begin{multline}
|(\Pi_{\Xi,L}w)(x)|\le C\int_{\R^2}\sum_{\xi_j\in\Xi}|w(y)|\theta(y-\xi_j)\theta(x-\xi_j)\,dy\le\\\le
4C\int_{\R^2}|w(y)|\theta(x-y)\sum_{\xi_j\in\Xi}(\theta(x-\xi_j)+\theta(y-\xi_j))\,dy\le\\\le C'\int_{\R^2}|w(y)|\theta(x-y)\,dy
\end{multline}
and Lemma \ref{0.point} gives us the desired estimate and finishes the proof of the lemma.
\end{proof}
\begin{corollary}\label{Cor2.lin} For a sufficiently small $\eb>0$, the equation
%$$
\begin{equation}\label{2.elin}
(\Dx-\lambda)W+\eb\Pi_{\Xi,L}W=g
\end{equation}
is uniquely solvable for every $g\in L^2_\sigma$ and the following estimate holds:
%$$
\begin{equation}\label{2.linest}
\|W\|_{H^2}\le C\|g\|_{L^2},
\end{equation}
where $C$ is independent of $L$. Also, the analogous estimates in the
other spaces mentioned in the theorem hold.
\end{corollary}
Indeed, the result holds for $\eb=0$ due to the elliptic regularity and
its validity for small $\eb>0$ follows by perturbation arguments.
\par
From now on we fix $\eb>0$ in such a way that the assertion of the corollary holds.
\par
As the next step, we construct an approximate solution for the problem
%$$
\begin{equation}\label{2.aau}
(\Dx-\lambda)u-P\divv(\bar u_\Xi\otimes u+u\otimes\bar
u_\Xi)+\eb\Pi_{\Xi,L}u=P\divv(\tilde g),
\end{equation}
where $\tilde g=\sum_{\xi_j\in\Xi}\tilde g_j$ with
$\supp \tilde g_j\subset B^1_{\xi_j}$. Namely, we define $u_j$ as a
unique solution of the one-vortex problem
%$$
\begin{equation}\label{2.one}
(\Dx-\lambda)u_j-P\divv(\bar u_j\otimes u_j+u_j\otimes\bar u_j)+
\eb\Pi_{j,L}u_j=P\divv\tilde g_j
\end{equation}
%$$
and take
$$
\tilde u:=\sum_{\xi_j\in\Xi}u_j.
$$
Note that, due to the shifted version of estimate \eqref{1.reg2} we have
%$$
\begin{equation}\label{2.uj}
\|u_j\|_{H^2(B^1_y)}\le C\|\tilde g_j\|_{H^1}\theta(y-\xi_j),
\end{equation}
%$$
where the constant $C$ is independent of $y\in\R^2$ and $\xi_j\in\Xi$.
\end{proof}
\begin{lemma}\label{Lem2.app} The approximate solution $\tilde u$ satisfies
%$$
\begin{equation}\label{2.app-est}
\|\tilde u\|_{H^2}\le C\|\tilde g\|_{H^1},
\end{equation}
%$$
where the constant $C$ is independent of $L$. Moreover, the analogous
estimates hold in the spaces $L^2_b$, $L^2_{\theta_{x_0}}$ and
$L^2A^\infty_{\theta_{x_0}^{5/6}}$.
\end{lemma}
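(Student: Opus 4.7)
The plan is to read the desired bounds off the one-vortex estimate \eqref{2.uj} by means of the discrete Hausdorff--Young Lemma \ref{Lem1.disc}. First I would translate \eqref{2.uj} into a single pointwise inequality for the function $y\mapsto\|\tilde u\|_{H^2(B^1_y)}$. Applying the triangle inequality to $\tilde u=\sum_j u_j$ and invoking the shifted version of \eqref{1.reg2} at each $\xi_j$ gives
\begin{equation*}
\|\tilde u\|_{H^2(B^1_y)}\le\sum_{\xi_j\in\Xi}\|u_j\|_{H^2(B^1_y)}\le C\sum_{\xi_j\in\Xi}\|\tilde g_j\|_{H^1}\,\theta(y-\xi_j),
\end{equation*}
where $C$ is independent of $y$, of $L$ and of the set $\Xi\subset L\Bbb Z^2$.

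Second, I would apply Lemma \ref{Lem1.disc} to the nonnegative function $F(y):=\|\tilde u\|_{H^2(B^1_y)}$ with coefficients $a_j:=\|\tilde g_j\|_{H^1}$ (padding $a_j=0$ for $\xi_j\notin\Xi$). Combined with the equivalences \eqref{0.eq.1} and \eqref{0.eq2} this produces, uniformly in $L$ and in $x_0\in\R^2$,
\begin{equation*}
\|\tilde u\|_{H^2}^2\le C\sum_j a_j^2,\qquad \|\tilde u\|_{H^2_{\theta_{x_0}}}^2\le C\sum_j a_j^2\,\theta(x_0-\xi_j),
\end{equation*}
and the analogous $\ell^\infty$-type statements for the $L^2_b$ and $L^2A^\infty_{\theta^{5/6}_{x_0}}$ versions; for the $L^2_b$ bound I would simply combine the pointwise inequality above with the uniform bound \eqref{0.sum-uni}, while for $L^2A^\infty_{\theta^{5/6}_{x_0}}$ I would invoke the remark following Lemma \ref{Lem1.disc} that the kernel $\theta$ may be replaced by $\theta^{5/6}$ without affecting the constants (the decay furnished by \eqref{2.uj} is one power sharper than needed, so the weaker weight still dominates).

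Third, I would close the loop by exploiting the separation of supports: the $\tilde g_j$ are contained in the pairwise disjoint unit balls $B^1_{\xi_j}$ with $|\xi_i-\xi_j|\ge L$, hence for $L$ large enough every $B^1_x$ meets at most one such support. Consequently,
\begin{equation*}
\sum_j a_j^2\sim\|\tilde g\|_{H^1}^2,\qquad \sum_j a_j^2\,\theta(x_0-\xi_j)\sim\|\tilde g\|_{H^1_{\theta_{x_0}}}^2,
\end{equation*}
with the obvious supremum analogues $\sup_j a_j\sim\|\tilde g\|_{H^1_b}$ and $\sup_j a_j\,\theta^{-5/6}(x_0-\xi_j)\sim\|\tilde g\|_{H^1A^\infty_{\theta^{5/6}_{x_0}}}$. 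Plugging these identifications into the bounds of the previous step yields \eqref{2.app-est} and its three weighted/uniformly local counterparts.

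The only delicate point, rather than a genuine obstacle, is the uniformity of all constants with respect to $L$ and with respect to the possibly infinite cardinality of $\Xi$: this is exactly what Lemmata \ref{Lem0.sum} and \ref{Lem1.disc} were set up to provide, the latter being the discrete counterpart of the convolution computation in Lemma \ref{Lem0.point} based on the key pointwise inequality \eqref{0.tineq}. Once this uniformity is secured, the rest of the proof is a purely mechanical application of the triangle inequality combined with the separation hypothesis on $\Xi$.
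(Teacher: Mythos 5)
Your proposal is correct and follows essentially the same route as the paper: the shifted one-vortex estimate \eqref{2.uj} is summed into the pointwise bound $\|\tilde u\|_{H^2(B^1_y)}\le C\sum_j\|\tilde g_j\|_{H^1}\theta(y-\xi_j)$, Lemma \ref{Lem1.disc} (with $\theta$ replaced by $\theta^{5/6}$ for the $L^2A^\infty$ case) converts this into the non-weighted, weighted and sup-type bounds, and the disjointness of the supports of the $\tilde g_j$ identifies $\sum_j\|\tilde g_j\|_{H^1}^2$ with $\|\tilde g\|_{H^1}^2$; the only cosmetic difference is that the paper obtains the $L^2_b$ bound from the weighted one via \eqref{0.eq2}, which you also mention.
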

\begin{proof} Using \eqref{2.uj}, we get
$$
\|\tilde u\|_{H^2(B^1_x)}\le C\sum_{\xi_j\in\Xi}\|\tilde g_j\|_{H^1}\theta(x-\xi_j)
$$
and applying Lemma \ref{Lem1.disc}, we arrive at
$$
\|\tilde u\|^2_{H^2}\le C\int_{x\in\R^2}\|\tilde u\|^2_{H^2(B^1_x)}\,dx
\le C_1\sum_j\|g_j\|^2_{H^1}=C_1\|\tilde g\|^2_{H^1},
$$
where we have used that the supports of $\tilde g_j$ do not intersect. Analogously,
$$
\|\tilde u\|^2_{H^2_{\theta_{x_0}}}\le
C\sum_{i\in\Xi}\|\tilde g_i\|^2_{H^1}\theta(\xi_i-x_0)\le
C_1\|\tilde g\|^2_{H^1_{\theta_{x_0}}}.
$$
The analogous estimate in $L^2_b$ follows from the last estimate via \eqref{0.eq2}.
Finally, the same Lemma \ref{Lem1.disc}, where $\theta$ is replaced
by $\theta^{5/6}$ gives
$$
\|\tilde u\|_{H^2A^\infty_{\theta_{x_0}^{5/6}}}\le
C\sup_{\xi_i\in\Xi}\{\|\tilde g_i\|_{H^1}\theta(\xi_i-x_0)^{-5/6}\}\le
C\|\tilde g\|_{H^1A^\infty_{\theta^{5/6}_{x_0}}}.
$$
 Thus, the lemma is proved.
\end{proof}
The approximate solution $\tilde u$ solves the equation
%$$
\begin{equation}\label{2.aaut}
(\Dx-\lambda)\tilde u-P\divv(\bar u_\Xi\otimes \tilde u+\tilde u\otimes\bar u_\Xi)+\eb\Pi_{\Xi,L}\tilde u=P\divv(\tilde R)+\hat R,
\end{equation}
%$$
where
$$
\tilde R:=-\sum_{i,j\,: i\ne j}(\bar u_i\otimes u_j+u_j\otimes\bar u_i),\ \
\hat R:=\eb\alpha_L\sum_{i,j\,: i\ne j}(u_i,\tilde\psi_{j,L})\varphi_{j,L}.
$$
The next  lemma gives the smallness of the approximation error.
\begin{lemma}\label{Lem2.e-small} The functions $\hat R$ and $\tilde R$ satisfy
%$$
\begin{equation}\label{2.small}
\|\tilde R\|_{H^1}+\|\hat R\|_{L^2}\le CL^{-3}\|\tilde g\|_{H^1},
\end{equation}
%$$
where the constant $C$ is independent of $L$. Moreover, the analogous
estimates hold in $L^2_b$, $L^2_{\theta_{x_0}}$ and
$L^2A^\infty_{\theta_{x_0}^{5/6}}$.
\end{lemma}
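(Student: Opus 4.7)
The two remainder pieces $\tilde R$ and $\hat R$ are driven by the same mechanism: each summand in the double sum involves two objects localized near well-separated points $\xi_i,\xi_j$, so an extra factor $\theta(\xi_i-\xi_j)$ appears for $i\ne j$, and since $\xi_i-\xi_j\in L\Bbb Z^2\setminus\{0\}$, Lemma \ref{Lem0.sum} produces two powers of $L^{-3}$ when we perform Cauchy--Schwarz and swap the order of summation. My plan is to treat the two pieces in parallel, reducing each to a discrete convolution to which Lemma \ref{Lem1.disc} applies directly.

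For $\tilde R$, I would first observe that $\bar u_i\otimes u_j+u_j\otimes\bar u_i$ is supported in $B^1_{\xi_i}$ (by the localization of $\bar u_0$), so the full sum has disjoint-support contributions indexed by $i$. On $B^1_{\xi_i}$ the shifted version \eqref{2.uj} of the weighted regularity estimate of Lemma \ref{Lem1.we} yields $\|u_j\|_{H^2(B^1_{\xi_i})}\le C\|\tilde g_j\|_{H^1}\theta(\xi_i-\xi_j)$, while $\|\bar u_i\|_{C^1}\le C$ uniformly in $i$. Hence
$$\|\tilde R\|_{H^1(B^1_{\xi_i})}\le C\sum_{j\ne i}\|\tilde g_j\|_{H^1}\theta(\xi_i-\xi_j).$$
Applying Cauchy--Schwarz to split this into $\bigl(\sum_{j\ne i}\theta(\xi_i-\xi_j)\bigr)^{1/2}\bigl(\sum_{j\ne i}\|\tilde g_j\|_{H^1}^2\theta(\xi_i-\xi_j)\bigr)^{1/2}$ and using \eqref{0.sum-small} for the first factor gives an $L^{-3/2}$; summing the squares over $i$, exchanging the order of summation, and reusing \eqref{0.sum-small} once more extracts another $L^{-3}$, yielding $\|\tilde R\|_{H^1}^2\le CL^{-6}\|\tilde g\|_{H^1}^2$ as required.

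For $\hat R$, the key preparatory step is to control the pairings $(u_i,\tilde\psi_{j,L})$ for $i\ne j$. Using the exponential decay $|\tilde\psi_{j,L}(y)|\le Ce^{-\gamma|y-\xi_j|}$ from Lemma \ref{Lem1.cut} and the pointwise bound $|u_i(y)|\le C\|\tilde g_i\|_{H^1}\theta(y-\xi_i)$ obtained from \eqref{2.uj} via Sobolev embedding $H^2(B^1_y)\hookrightarrow C(B^1_y)$, followed by the key inequality \eqref{0.tineq} (with $\theta$-part absorbed into the exponential), I expect
$$|(u_i,\tilde\psi_{j,L})|\le C\|\tilde g_i\|_{H^1}\theta(\xi_i-\xi_j),\qquad i\ne j.$$
Writing $\hat R=\eb\alpha_L\sum_j c_j\,\varphi_{j,L}$ with $c_j:=\sum_{i\ne j}(u_i,\tilde\psi_{j,L})$ and using $|\varphi_{j,L}(x)|\le C\theta(x-\xi_j)$, the discrete Hausdorff--Young bound of Lemma \ref{Lem1.disc} gives $\|\hat R\|_{L^2}^2\le C\sum_j|c_j|^2$, and the same Cauchy--Schwarz/swap-of-sums trick used for $\tilde R$ delivers the factor $L^{-6}$.

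The weighted, uniformly local, and $L^2A^\infty_{\theta^{5/6}}$ estimates follow the same scheme; one only has to replace the plain $\ell^2$ summation over $\Xi$ with its weighted analogue $\ell^2_{\theta_{x_0}}$ (resp. $\ell^\infty_{\theta^{-5/6}_{x_0}}$), again via the corresponding version of Lemma \ref{Lem1.disc} and \eqref{0.eq2}; the integrability of $\theta^{5/6}$ on the grid $L\Bbb Z^2\setminus\{0\}$ (which still gives an $O(L^{-3\cdot 5/6})$ residual, more than enough) is what makes the uniform-in-$\#\Xi$ bound possible. The single real obstacle I anticipate is the exponential-versus-cubic integral bound for $(u_i,\tilde\psi_{j,L})$: one must exploit that $\tilde\psi_{j,L}$ is essentially supported in $|y-\xi_j|\lesssim L$ and that $|\xi_i-\xi_j|\ge L$, so the weight $\theta(y-\xi_i)$ is roughly $\theta(\xi_i-\xi_j)$ on this region; a clean way to do so is to split $\theta(y-\xi_i)=\theta^{1/2}(y-\xi_i)\cdot\theta^{1/2}(y-\xi_i)$ and absorb one factor into the exponential, leaving $\theta(\xi_i-\xi_j)$ after integration. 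Everything else is bookkeeping.
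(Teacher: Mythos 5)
Your proposal is correct, and its core ingredients are the same as the paper's: the pointwise bounds $|u_i(y)|\le C\|\tilde g_i\|_{H^1}\theta(y-\xi_i)$ coming from \eqref{2.uj} and Sobolev embedding, $|\tilde\psi_{j,L}(y)|\le Ce^{-\gamma|y-\xi_j|}\le C\theta(y-\xi_j)$ and $|\varphi_{j,L}(x)|\le C\theta(x-\xi_j)$ from \eqref{1.uni}, combined through the key inequality \eqref{0.tineq} to produce the interaction factor $\theta(\xi_i-\xi_j)$ — this is exactly the paper's display \eqref{2.ter}, and in particular the ``obstacle'' you anticipate for $(u_i,\tilde\psi_{j,L})$ is not one, since $e^{-\gamma|z|}\le C\theta(z)$ already places you in the setting of \eqref{0.tineq}. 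Where you genuinely diverge is the passage from the pointwise bound to the $L^2$ estimate: the paper records the $L^1\to l^1$ and $L^\infty\to l^\infty$ endpoint bounds for the map $\{\tilde g_j\}\mapsto\{u_j\}\mapsto\hat R$ (both carrying $L^{-3}$ via \eqref{0.sum-small}) and then invokes complex interpolation, with weights for the $L^2_{\theta_{x_0}}$ case, whereas you run a Cauchy--Schwarz/swap-of-sums (Schur-test) argument directly at the $\ell^2$ level and feed the result into Lemma \ref{Lem1.disc}. Both routes give $\|\hat R\|_{L^2}\le CL^{-3}\|\tilde g\|_{H^1}$, and in the weighted case both give the same degraded power ($\|\hat R\|_{L^2_{\theta_{x_0}}}\le CL^{-3/2}\|\tilde g\|_{H^1_{\theta_{x_0}}}$, because $\sum_{j\ne i}\theta(\xi_j-x_0)\theta(\xi_i-\xi_j)\le C\theta(\xi_i-x_0)$ carries no extra smallness); your argument is a perfectly good, more elementary substitute for the interpolation step, and your treatment of $\tilde R$ via the disjoint supports of the $\bar u_i$ is equivalent to the paper's reduction of $\tilde R$ to the same structure as $\hat R$.

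One quantitative caveat: in $L^2A^\infty_{\theta^{5/6}_{x_0}}$ your parenthetical expectation of an $O(L^{-5/2})$ residual is too optimistic. In the sup-weighted norm the lattice sum $\sum_i\theta^{5/6}(\xi_i-x_0)$ is only $O(1)$, so the only smallness available comes from sacrificing part of the interaction kernel, e.g. $\theta(\xi_i-\xi_j)\le L^{-1/2}\theta^{5/6}(\xi_i-\xi_j)$ for $i\ne j$, which yields $O(L^{-1/2})$ — precisely what the paper obtains in \eqref{2.ama}, and consistent with its remark that smallness in $L^2A^\infty_{\theta^{1-\kappa}}$ cannot be pushed to $\kappa=0$. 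Since only some negative power of $L$ is needed in the later iteration, this does not invalidate your scheme, but the phrase ``more than enough'' should be replaced by the correct, weaker power.
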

\begin{proof}  We start with the estimate for $\hat R$. Using \eqref{0.tineq}, \eqref{0.sum-small}, \eqref{1.uni} and \eqref{2.uj}, we have
%$$
\begin{multline}\label{2.ter}
|\hat R(x)|\le C\int_{y}\sum_{i,j;\, i\ne j}\|\tilde g_i\|_{H^1}\theta(y-\xi_i)\theta(y-\xi_j)\,dy\theta(x-\xi_j)\le \\\le 4C\sum_{i,j;\,i\ne j}\|\tilde g_i\|_{H^1}\theta(\xi_i-\xi_j)\theta(x-\xi_j)\int_y(\theta(y-\xi_i)+\theta(y-\xi_j))\,dy\le\\\le
C\sum_{i,j:\,i\ne j}\|\tilde g_i\|_{H^1}\theta(x-\xi_j)\theta(\xi_i-\xi_j).
\end{multline}
%$$
\begin{color}{black}
Therefore,
%$$
\begin{multline}\label{2.simplified}
|\hat R(x)|^2\le C\sum_{i,j\,i\ne j}\sum_{k,l\,k\ne l}\|\tilde g_i\|_{H^1}\|\tilde g_k\|_{H^1}\theta(x-\xi_j)\theta(x-\xi_l)\theta(\xi_i-\xi_j)\theta(\xi_k-\xi_l)\le\\\le
C\sum_{i,j\,i\ne j}\sum_{k,l\,k\ne l}\frac12(\|\tilde g_i\|_{H^1}^2+\|\tilde g_k\|_{H^1}^2)\theta(x-\xi_j)\theta(x-\xi_l)\theta(\xi_i-\xi_j)\theta(\xi_k-\xi_l)\le\\\le
C\sum_{i,j\,i\ne j}\sum_{k,l\,k\ne l}\|\tilde g_i\|_{H^1}^2\theta(x-\xi_j)\theta(x-\xi_l)\theta(\xi_i-\xi_j)\theta(\xi_k-\xi_l)\le\\\le
CL^{-3}\sum_{i,j\,i\ne j}||\tilde g_i\|_{H^1}^2\theta(x-\xi_j)\theta(\xi_i-\xi_j),
\end{multline}
%$$
where we have implicitly used that the terms containing $\{\tilde g_i\}_{i\in\mathbb Z}$ and $\{\tilde g_k\}_{k\in\mathbb Z}$ are identical up to change $\{i,j\}\to\{k,l\}$ followed by summation first with respect to $k$ and then with respect to $l$. Integrating with respect to $x$ followed by taking a sum with respect to $j$, we arrive at
\end{color}
\begin{comment}
Thus,
$$
\|\hat R\|_{L^1}\le\sum_{i,j:\,i\ne j}\|\tilde g_i\|_{H^1}\theta(\xi_i-\xi_j)
\le CL^{-3}\sum_i\|\tilde g_i\|_{H^1}
$$
and
\begin{equation}\label{L.inf}
\|\hat R\|_{L^\infty}\le
\sup_i\|\tilde g_i\|_{H^1}\sup_x\sum_{i,j:\,i\ne j}\theta(x-\xi_j)\theta(\xi_i-\xi_j)
\le CL^{-3}\sup_i\|\tilde g_i\|_{H^1}.
\end{equation}
%\begin{color}{red}
Now complex interpolation for the $L^p$ and $l^p$ spaces applied for the linear
map $\{\tilde g_j\}_{\xi_j\in\Xi}\to\{u_j\}_{\xi_j\in\Xi}\to\hat R$ gives
%\end{color}
\end{comment}
%$$
\begin{equation}
\|\hat R\|_{L^2}^2\le CL^{-6}\sum_{i}\|\tilde g_i\|_{H^1}^2  \le CL^{-6}\|\tilde g\|_{H^1}^2
\end{equation}
%$$
and this, in turn,  gives the desired estimate
for $\hat R$ in $L^2$.

\begin{color}{black}
 Let us prove the estimate for $L^2_{\theta_{x_0}}$-norm. According to \eqref{2.simplified},
% $$
 \begin{multline}
 |\hat R(x)|^2\theta(x-x_0)\le CL^{-3}\sum_{i,j}\|\tilde g_i\|^2_{H^1}\theta(x-x_0)\theta(x-\xi_j)\theta(\xi_j-\xi_i)\le\\\le CL^{-3}\sum_{i,j}\|\tilde g_i\|^2_{H^1}\theta(x-x_0)\theta(x-\xi_i)[\theta(\xi_j-\xi_i)+\theta(x-\xi_j)]\le\\\le CL^{-3} \sum_{i}\|\tilde g_i\|^2_{H^1}\theta(x-x_0)\theta(x-\xi_i)
 \le\\\le CL^{-3}\sum_i\|\tilde g_i\|^2_{H^1}\theta(x_0-\xi_i)[\theta(x-\xi_i)+\theta(x-x_0)].
\end{multline}
%$$
Integrating the last inequality with respect to $x$, we arrive at
\end{color}
\begin{comment}
To this end, we estimate the $L^1_{\theta_{x_0}}$-norm of $\hat R$:
%$$
\begin{multline}\label{L1weight}
\|\hat R\|_{L^1_{\theta_{x_0}}}\le C\int_x\sum_{i,j}
\|\tilde g_i\|_{H^1}\theta(x-x_0)\theta(x-\xi_j)\theta(\xi_i-\xi_j)\,dx\le\\\le
C\sum_{i,j}\|\tilde g_i\|_{H^1}\theta(\xi_j-x_0)\theta(\xi_i-\xi_j)\le
C_1\sum_i\|\tilde g_i\|_{H^1}\theta(x_0-\xi_i)
\end{multline}
%$$
%\begin{color}{red}
and complex interpolation with weights (see, for instance,
\cite[Theorem 1.18.5]{Trib}) gives, taking into account \eqref{L.inf}
and \eqref{L1weight}, that
%\end{color}
\end{comment}
$$
\|\hat R\|^2_{L^2_{\theta_{x_0}}}\le C_2L^{-3}
\sum_{i}\|\tilde g_i\|^2_{H^1}\theta(\xi_i-x_0)
\le C_3L^{-3}\|\tilde g\|^2_{H^1_{\theta_{x_0}}},
$$
and the desired estimate in $L^2_{\theta_{x_0}}$ is proved.
The estimate for the $L^2_b$-norm now follows from \eqref{0.eq2}.

\begin{color}{black}
 Let us now estimate the $L^2A^\infty_{\theta_{x_0}^{5/6}}$-norm of $\hat R$. Indeed,
%$$
\begin{multline}
|\hat R(x)|\theta^{-5/6}(x-x_0)\le\\ C\sup_i\{\|\tilde g_i\|_{H^1}\theta(\xi_i-x_0)^{-5/6}\}
\sum_{i,j,\,i\ne j}\theta(x-\xi_j)\theta(\xi_i-\xi_j)
\theta(\xi_i-x_0)^{5/6}\theta(x-x_0)^{-5/6}\le\\\le
C_1\|\tilde g\|_{H^1A^\infty_{\theta_{x_0}^{5/6}}}\sum_{i,j,\,i\ne j}\theta(x-\xi_j)\theta(\xi_i-\xi_j)
\theta(\xi_i-x_0)^{5/6}\theta(x-x_0)^{-5/6}.
\end{multline}
%$$
Using inequality \eqref{0.tineq} several times, we estimate summand in the last formula as follows:
%$$
\begin{multline}
\theta(x-\xi_j)\theta(\xi_i-\xi_j)
\theta(\xi_i-x_0)^{5/6}\theta(x-x_0)^{-5/6}= [\theta(\xi_i-\xi_j)\theta(x-\xi_j)]^{1/6}\times\\\times[\theta(\xi_i-\xi_j)\theta(x-\xi_j)
\theta(\xi_i-x_0)]^{5/6}\theta(x-x_0)^{-5/6}\le C[\theta(\xi_i-\xi_j)\theta(x-\xi_j)]^{1/6}\times\\\times
[\(\theta(\xi_i-x_0)+\theta(\xi_i-\xi_j)\)\(\theta(\xi_j-x_0)+\theta(x-\xi_j)\)]^{5/6}.
\end{multline}
%$$
Applying now the H\"older inequality to the sum considered, we arrive at
%$$
\begin{multline}
\sum_{i,j\,i\ne j}\theta(x-\xi_j)\theta(\xi_i-\xi_j)
\theta(\xi_i-x_0)^{5/6}\theta(x-x_0)^{-5/6}\le\\\le C\(\sum_{i,j\,i\ne j}\theta(\xi_i-\xi_j)\theta(x-\xi_j)\)^{1/6}\times\\\times\(\sum_{i,j\,i\ne j}\(\theta(\xi_i-x_0)+\theta(\xi_i-\xi_j)\)\(\theta(\xi_j-x_0)+\theta(x-\xi_j)\)\)^{5/6}\!\!\!\!\!\!.
\end{multline}
%$$
In both sums in the RHS of the last inequality we make summation first by $i\ne j$ and then by $j$. The first term gives us the multiplier $CL^{-1/2}$ and the second one will be bounded as $L\to\infty$. Combining everything together we end up with the desired estimate
$$
|\hat R(x)|\theta(x-x_0)^{-5/6}\le CL^{-1/2}\|\tilde g\|_{H^1A^\infty_{\theta_{x_0}^{5/6}}}
$$
and, therefore,
%$$
\begin{multline}
\|\hat R\|_{L^2A^\infty_{\theta_{x_0}^{5/6}}}=
\sup_x\{\|\hat R\|_{L^2(B^1_x)}\theta(x-x_0)^{-5/6}\}\le\\\le
 C\sup_x\{|\hat R(x)|\theta(x-x_0)^{-5/6}\}\le C L^{-1/2}\|\tilde g\|_{H^1A^{\infty}_{\theta_{x_0}^{5/6}}}.
\end{multline}
%$$
\end{color}
\begin{comment}
On the other hand, due to
%\begin{color}{red}
\eqref{L.inf},
%\end{color}
%the already proved estimate for the $L^\infty$-norm of $\hat R$,
$$
\sup_{x}\{\|\hat R\|_{L^2(B^1_x)}\}\le C\|\hat R\|_{L^\infty}
\le C_1L^{-3}\sup_i\|\tilde g_i\|_{H^1}
$$
%\begin{color}{red}
and complex interpolation with weights again gives
%\end{color}
%and the interpolation gives
%$$
\begin{equation}\label{2.ama}
\|\hat R\|_{L^2A^\infty_{\theta_{x_0}^{5/6}}}\le
 CL^{-1/2}\|\tilde g\|_{H^1A^\infty_{\theta_{x_0}^{5/6}}},
\end{equation}
%$$
where the constant $C$ is independent of $x_0$, $L$ and $\tilde g$.
\end{comment}
 This finishes the estimates for $\hat R$.
\par
Let us consider now the function $\tilde R$. Using that $\bar u_i$ is localized, we have
%$$
\begin{multline}
\|\tilde R\|_{H^1(B^1_x)}\le C\sum_{i,j;\,i\ne j}\|u_j\|_{H^1(B^1_{\xi_i})}\theta(x-\xi_i)\le\\\le
C\sum_{i,j:j\ne i}\|\tilde g_j\|_{H^1}\theta(\xi_i-\xi_j)\theta(x-\xi_i).
\end{multline}
%$$

%\end{color}

This estimate is almost identical to \eqref{2.ter}
for the function $\hat R$, so, arguing as before, we derive the
analogous estimates for $\tilde u$ and finish the proof of the lemma.
\end{proof}
\begin{remark} Arguing analogously, we may check the smallness
of $\hat R$ and $\tilde R$ in the spaces
%\begin{color}{red}
$L^2A^\infty_{\theta_{x_0}^{1-\kappa}}$
%\end{color}
% $L^2A^\infty_{\theta_{x_0}^{-1+\kappa}}$
for any (small) $\kappa>0$, but we do not know how to get smallness for
the case $\kappa=0$.
\end{remark}
We are ready to verify the existence of a solution for equation \eqref{2.au}.
\begin{lemma}\label{Lem2.ex} For sufficiently small $\eb>0$
there exists a solution $w$ of problem \eqref{2.au} and this
solution satisfies estimate \eqref{2.main} uniformly with respect
to $L\to\infty$.  Moreover the analogous estimate holds in the spaces
$L^2_b$, $L^2_{\theta_{x_0}}$ and $L^2A^\infty_{\theta_{x_0}^{5/6}}$.
\end{lemma}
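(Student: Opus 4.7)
The plan is to construct the solution by a Neumann-type iteration that chains together the two resolvent constructions already set up in this section: the purely linear solver from Corollary~\ref{Cor2.lin} for the operator $(\Dx-\lambda)+\eb\Pi_{\Xi,L}$, and the multi-vortex approximate solver produced in Lemma~\ref{Lem2.app}. The observation that makes the iteration close up is that, for any $g\in L^2_\sigma(\R^2)$, applying the linear solver first automatically yields a residual of the localized divergence form $P\divv\sum_j\tilde g_j$ with $\supp\tilde g_j\subset B^1_{\xi_j}$: this residual is precisely the advection term that was dropped, and its coefficient $\bar u_\Xi$ is already a sum of pieces supported in $B^1_{\xi_j}$.

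Concretely, one cycle of the iteration proceeds as follows. First I would solve $(\Dx-\lambda)W+\eb\Pi_{\Xi,L}W=g$ via Corollary~\ref{Cor2.lin}, obtaining $\|W\|_{H^2}\le C\|g\|_{L^2}$. The residual for the full equation is then
\begin{equation*}
g-(\Cal L_\Xi+\eb\Pi_{\Xi,L})W=P\divv\sum_j\tilde G_j,\qquad \tilde G_j:=\bar u_j\otimes W+W\otimes\bar u_j,
\end{equation*}
with $\supp\tilde G_j\subset B^1_{\xi_j}$ and $\sum_j\|\tilde G_j\|_{H^1}^2\lesssim\|W\|_{H^2}^2\lesssim\|g\|_{L^2}^2$, since $\bar u_0$ is smooth and compactly supported. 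Into this localized residual I would feed the construction of Lemma~\ref{Lem2.app}: solve the one-vortex problem \eqref{2.one} with right-hand side $P\divv\tilde G_j$ to obtain $u_j$ and set $\tilde V:=\sum_j u_j$. Lemma~\ref{Lem2.app} then gives $\|\tilde V\|_{H^2}\lesssim\|g\|_{L^2}$, while Lemma~\ref{Lem2.e-small} controls the updated residual in $L^2$-norm by $CL^{-3}\|g\|_{L^2}$. Thus $w_1:=W+\tilde V$ is a first approximation whose residual has been contracted by $L^{-3}$.

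Iterating the same cycle on the successive residuals produces a sequence of corrections $w_n$ with $\|w_n\|_{H^2}\le C(CL^{-3})^{n-1}\|g\|_{L^2}$. For $L$ large enough the geometric series $w:=\sum_n w_n$ converges in $H^2$ and solves \eqref{2.au} with the claimed bound \eqref{2.main}. For the weighted versions in $L^2_b$, $L^2_{\theta_{x_0}}$, and $L^2A^\infty_{\theta_{x_0}^{5/6}}$, the same scheme applies verbatim once I invoke the corresponding weighted bounds already established in Corollary~\ref{Cor2.lin} and Lemmata~\ref{Lem2.app}, \ref{Lem2.e-small}; the only thing to check is that each pass of the cycle contracts the chosen weighted norm by a factor $L^{-3}$ with a constant independent of $\Xi$ and of $x_0$.

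The main obstacle I anticipate is precisely this last book-keeping in the $L^2A^\infty_{\theta_{x_0}^{5/6}}$ case. The fractional weight $\theta^{5/6}$ (rather than $\theta$) is forced by the non-integrable, only quadratic, decay of the adjoint eigenfunction $\psi_0$, as emphasized in Remark~\ref{Rem1.app-loc}. Consequently one must ensure that the interpolation step used to derive \eqref{2.ama} preserves the same exponent $5/6$ in each pass of the iteration, so that the contraction factor $L^{-3}$ is not lost. The corresponding $L^2$, $L^2_b$, and $L^2_{\theta_{x_0}}$ statements, by contrast, are essentially routine consequences of Lemmata~\ref{Lem1.disc} and~\ref{Lem0.sum}.
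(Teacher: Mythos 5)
Your proposal is correct and follows essentially the same route as the paper: a first pass through Corollary \ref{Cor2.lin} to dispose of the generic part of the right-hand side, the multi-vortex approximate solver of Lemma \ref{Lem2.app} for the resulting localized divergence-form residual, the smallness of the interaction errors from Lemma \ref{Lem2.e-small}, and iteration of this two-step cycle summed as a convergent geometric series, with the weighted cases handled by the corresponding weighted statements of those lemmata. The only minor inaccuracy is the claim that each pass contracts by $L^{-3}$ in every weighted space --- in $L^2A^\infty_{\theta_{x_0}^{5/6}}$ estimate \eqref{2.ama} yields only a factor $L^{-1/2}$ --- but any negative power of $L$ suffices for the convergence, so the argument is unaffected.
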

\begin{proof} We give the proof for the $L^2$-case only, the remaining
 ones are completely analogous. Let us write out the right-hand side $g$
 of equation \eqref{2.au} in the form
$$
g=P\divv \tilde g+\hat g,
$$
where $\tilde g=\sum_{\xi_j}g_j$ and $\supp g_j\subset B^1_{\xi_j}$.
For instance, at the first step, we may take $\tilde g=0$. Then, using
Corollary \ref{Cor2.lin}, we solve equation \eqref{2.elin} with the
right-hand side $\hat g$. Let $W$ be the corresponding solution.
Then, $W$ satisfies \eqref{2.linest} and the function $v=u-W$ solves
equation \eqref{2.aau} where $\tilde g$ is replaced by
$$
\tilde G:=\tilde g-\sum_{\xi_i\in\Xi}(\bar u_i\otimes W+W\otimes\bar u_i).
$$
Since $\bar u_i$ is smooth and localized, the function $\tilde G$
has the same structure as $\tilde g$, so we may construct the
approximate solution $\tilde u$ of equation \eqref{2.aau},
which satisfy equation \eqref{2.aaut} and the error functions
$\tilde R$ and $\hat R$ satisfy
%$$
\begin{equation}\label{2.Rsmall}
\|\tilde R\|_{H^1}+\|\hat R\|_{L^2}\le CL^{-3}\|\tilde G\|_{H^1}\le
C_1L^{-3}\(\|\tilde g\|_{H^1}+\|\hat g\|_{L^2}\).
\end{equation}
%$$
We also have the estimate
%$$
\begin{equation}\label{2.bound}
\|W\|_{H^2}+\|\tilde u\|_{H^2}\le C\(\|\tilde g\|_{H^1}+\|\hat g\|_{L^2}\).
\end{equation}
%$$
It remains to note that equation \eqref{2.aaut} has the same structure as
\eqref{2.au} if we take $\tilde g_{new}:=\tilde R$, $\hat g_{new}:=\hat R$
and $g_{new}:=P\divv \tilde g_{new}+\hat g_{new}$, so we may iterate the
construction of the approximate solution. The convergence of the error to
zero is guaranteed by estimate \eqref{2.Rsmall} and the fact that the
constructed solution belongs to $H^2$ follows from estimate \eqref{2.bound}.
This finishes the proof of the lemma.
\end{proof}
Thus, to prove the theorem, we only need to verify the uniqueness. This is
immediate if $\#\Xi$ is finite since the corresponding operator is Fredholm
of index zero, but this argument does not work in the case of infinitely many
vortices. Although in the present paper, we are interested in the cases where
this number is finite or we have periodicity assumption and in both cases the
Fredholm argument works, for completeness, we give the proof of uniqueness in
$L^2_b$-case. The case of the space $L^2_{\theta_{x_0}}$ is analogous, but
technically a bit more difficult since we should care about the proper cut
off for the divergence free functions from this space and cannot use the
stream function and Zygmund--Calderon theory directly, so we left this case
to the reader.
\begin{lemma}\label{Lem2.unique} Equation \eqref{2.au} is uniquely solvable
in $H^2_b$ for any $g\in L^2_{b,\sigma}$.
\end{lemma}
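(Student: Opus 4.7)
My plan is to establish uniqueness by a cutoff argument that reduces the problem to the weighted a priori estimate in $L^2_{\theta_{x_0}}$ already verified in Lemma~\ref{Lem2.ex}. Let $u\in H^2_b$ satisfy $Au:=(\Cal L_\Xi+\eb\Pi_{\Xi,L})u=0$. Since $u$ is divergence-free in $\R^2$, I write $u=\Nx^\bot\psi$ for a stream function $\psi$ solving $-\Dx\psi=\Rot u\in H^1_b$. By Zygmund--Calderon theory $\Nx\psi=u^\bot\in H^2_b$, although $\psi$ itself may grow at most linearly. For a smooth radial cutoff $\chi_R(x):=\chi(x/R)$ with $\chi=1$ on $B_{1/2}$ and $\chi=0$ outside $B_1$, let $c_R$ be the average of $\psi$ over the annulus $\Cal A_R:=\{R/2\le|x|\le R\}$; Poincar\'e on $\Cal A_R$ gives $\|\psi-c_R\|_{L^2(\Cal A_R)}\le CR^2\|u\|_{L^2_b}$. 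Define
\begin{equation*}
u_R:=\Nx^\bot\bigl(\chi_R(\psi-c_R)\bigr)=\chi_R u+(\psi-c_R)\Nx^\bot\chi_R.
\end{equation*}
By construction $u_R\in L^2_\sigma(\R^2)$, is supported in $B_R$, and coincides with $u$ on $B_{R/2}$.

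Next I fix $x_0=0$ and estimate $Au_R$ in the weighted $L^2_{\theta_0}$-norm. Using $Au=0$ I would decompose
\begin{equation*}
Au_R=[\Dx-\lambda,\chi_R]u-\bigl[P\divv(\bar u_\Xi\otimes\cdot+\cdot\otimes\bar u_\Xi),\chi_R\bigr]u+\eb[\Pi_{\Xi,L},\chi_R]u+A\bigl((\psi-c_R)\Nx^\bot\chi_R\bigr).
\end{equation*}
The elliptic commutator $[\Dx-\lambda,\chi_R]u=2\Nx\chi_R\cdot\Nx u+u\Dx\chi_R$ is supported in $\Cal A_R$ with $L^2$-norm $\lesssim\|u\|_{H^2_b}$, and the last term enjoys an $L^2$-bound of the same order after applying the Poincar\'e estimate above. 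The non-local commutators split into a local piece supported in $\Cal A_R$ and a tail obtained by integration against the cubically decaying kernels of Lemma~\ref{Lem0.Pd} and of the spectral functions from Lemma~\ref{Lem1.cut}; by Lemma~\ref{Lem0.point} any annular $L^2$-bound passes through these kernels up to a fixed constant. Since $\theta_0\le CR^{-3}$ on $\Cal A_R$ and the tails carry the same smallness after weighting (the relevant kernels being integrable), I expect to obtain
\begin{equation*}
\|Au_R\|_{L^2_{\theta_0}}\le CR^{-1/2}\|u\|_{H^2_b}.
\end{equation*}

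Applying the weighted a priori estimate from Lemma~\ref{Lem2.ex} then gives $\|u_R\|_{H^2_{\theta_0}}\le C\|Au_R\|_{L^2_{\theta_0}}\le CR^{-1/2}\|u\|_{H^2_b}$. Since $u_R=u$ on $B_{R/2}$ and the weight $\theta_0$ is bounded below by $c_\rho>0$ on any fixed ball $B_\rho$, for $R\ge2\rho$ we deduce $\|u\|_{H^2(B_\rho)}\le CR^{-1/2}\|u\|_{H^2_b}$. Letting $R\to\infty$ forces $u\equiv0$ on each $B_\rho$, and since $\rho$ is arbitrary we conclude $u\equiv0$. Uniqueness in $H^2_b$ thus follows from uniqueness in the strictly larger space $H^2_{\theta_0}$ into which $H^2_b$ embeds continuously.

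The main obstacle will be controlling the commutators of $\chi_R$ with the non-local operators $P\divv$ and $\Pi_{\Xi,L}$; a naive Calderon--Zygmund commutator is not small. The rescue is structural: $P$ appears only in the combination $\Nx\circ P$ with cubic kernel decay by Lemma~\ref{Lem0.Pd}, and the localized spectral functions in $\Pi_{\Xi,L}$ inherit the cubic decay from~\eqref{1.uni}. Combined with Lemma~\ref{Lem0.point}, these features let the smallness on $\Cal A_R$ propagate through the non-local operations. The linear growth of the stream function $\psi$ is neutralized by subtracting the annular average $c_R$ and using Poincar\'e. Replicating this argument entirely within the $L^2_b$ topology (as the paper prefers) is exactly what the authors' reference to Zygmund--Calderon theory targets; working via $L^2_{\theta_0}$ is, in spirit, a slightly different route to the same conclusion.
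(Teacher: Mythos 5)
Your reduction has a genuine circularity at its central step. You write ``Applying the weighted a priori estimate from Lemma~\ref{Lem2.ex} then gives $\|u_R\|_{H^2_{\theta_0}}\le C\|Au_R\|_{L^2_{\theta_0}}$'', but Lemma~\ref{Lem2.ex} is an \emph{existence} statement: for a given right-hand side it produces \emph{some} solution obeying the weighted bound. It is not an a priori estimate valid for every solution of $Aw=g$. To conclude that your particular $u_R$ obeys the bound you would have to know that $u_R$ coincides with the solution constructed there, i.e.\ that $A$ is injective on $H^2_{\theta_0}$ (or at least on a space containing both functions) --- which is exactly the uniqueness assertion under proof, and in fact a stronger one, since $H^2_b\subset H^2_{\theta_0}$. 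Your closing sentence, ``uniqueness in $H^2_b$ thus follows from uniqueness in the strictly larger space $H^2_{\theta_0}$'', makes this explicit: uniqueness in the larger weighted space is a stronger statement that has not been established anywhere; the paper even remarks that the weighted case is technically harder and deliberately leaves it aside. Note also that for infinitely many vortices (the only case where this lemma is actually needed) you cannot fall back on Fredholm theory to get injectivity in $L^2$ first, since the perturbation of $\Delta-\lambda$ is no longer relatively compact. A secondary, lesser issue is that the commutator bounds are only sketched (``I expect to obtain''), and some of the kernel bookkeeping is borderline, e.g.\ $|z|\theta(z)\sim(1+|z|)^{-2}$ is not integrable in 2D, so the near-diagonal part of the $[P\divv,\chi_R]$ commutator needs a more careful splitting.

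For contrast, the paper avoids any appeal to invertibility of the multi-vortex operator. It first shows that a putative kernel element $Z\in H^2_b$ must concentrate near some vortex center, $\|Z\|_{H^1_b(B^{\alpha L}_0)}\ge\delta\|Z\|_{H^2_b}$, then cuts off at the \emph{fixed} scale $L$ (using the stream function of the principal part of $Z$, controlled via a BMO/logarithmic estimate), and checks that the truncated function solves the \emph{single-vortex} auxiliary equation $\Cal L_0+\eb\Pi_{0,L}$ up to an error of order $L^{-1/2}$. Since the unique solvability of that one-vortex problem is already known (Corollary~\ref{Lem1.auL}, by perturbation of the genuinely invertible operator $\Cal L_0+\eb\Pi_0$), this forces $\|Z\|_{H^2_b(B^{\alpha L}_0)}\le CL^{-1/2}\|Z\|_{H^1_b}$, contradicting the concentration property for $L$ large. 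Your device of subtracting the annular average of the stream function is a nice alternative to the BMO estimate for taming its growth, but without an independently proven a priori (injectivity-type) weighted estimate for the multi-vortex operator, the $R\to\infty$ limit you aim for cannot be closed, and the argument as written does not establish uniqueness.
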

\begin{proof} Assume that it is not true. Then, there exists a function $Z\in H^2_b$ such that
%$$
\begin{equation}\label{2.aue}
\Cal L_{\Xi}Z+\eb\Pi_{\Xi,L}Z=0.
\end{equation}
%$$
Then, due to the elliptic regularity, $Z\in C^\infty_b(\R^2)$ and
$$
\|Z\|_{C^\infty}\le C\|Z\|_{L^2_b}
$$
Moreover,
%$$
\begin{multline}\label{2.Z}
Z=(\Dx-\lambda)^{-1}P\divv(\bar u_\Xi\otimes Z+Z\otimes \bar u_\Xi)-(\Dx-\lambda)^{-1}\alpha_L\sum_{\xi_j\in\Xi}(Z,\tilde \psi_{j,L})\varphi_{j,L}=\\
(\Dx-\lambda)^{-1}P\divv(\bar u_\Xi\otimes Z+Z\otimes \bar u_\Xi+(\Dx-\lambda)^{-1}(\sum_{\xi_j\in\Xi}(Z,\tilde\psi_j)(\bar u_j\otimes\varphi_j+\varphi_j\otimes\bar u_j)))\\+
(\Dx-\lambda)^{-1}(\Pi_{\Xi,L}Z-\Pi_\Xi Z):=\tilde Z+\bar Z,
\end{multline}
%$$
where $\Pi_\Xi=\sum_{\xi_j\in\Xi}\Pi_j$,
and, therefore, due to the structure of functions $\bar u_\Xi$, $\tilde\psi_{0,L}$ and $\tilde\varphi_{0,L}$, we have
$$
\|Z\|_{H^2_b}\le C\sup_{\xi_j\in\Xi}\|Z\|_{H^1_b(B^{\alpha L}_{\xi_j})}.
$$
Thus, without loss of generality, we may assume that
%$$
\begin{equation}\label{2.0}
\|Z\|_{H^1_b(B^{\alpha L}_0)}\ge \delta\|Z\|_{H^2_b}
\end{equation}
%$$
for some $\delta>0$ which is independent of $L$. Let $\Phi_L$ be the cut-off function, the same as in Section \ref{s0} and let $\tilde S$ be the stream function of $\tilde Z$. Then, analogously to \eqref{1.s0}, we see from \eqref{2.Z} that $\tilde S$ can be presented in the form of a Zygmund--Calderon operator applied to the function from $L^\infty$. Thus, $\tilde S\in BMO$ and therefore
%$$
\begin{multline}\label{2.tS}
|\tilde S(x)|\le C\|Z\|_{H^2_b}\ln(1+|x|)\le C_1\|Z\|_{H^1_b}\ln(1+|x|),\\ \|\nabla \tilde S\|_{H^1_b}\le C\|Z\|_{H^1_b}.
\end{multline}
%$$
{\color{black}
see \cite{FS72} (and also \cite{Pat23} for the elementary and
self-contained proof of the logarithmic estimate).}

On the other hand, using \eqref{1.cl} and arguing as in the proof of Lemma \ref{Lem2.pro}, we get
%$$
\begin{equation}\label{2.bZ}
\|\bar Z\|_{H^2_b}\le CL^{-1/2}\|Z\|_{H^1_b}.
\end{equation}
%$$
We define then $Z_L:=\nabla^\bot(\Phi_L\tilde S)=\Phi_L\tilde Z+\nabla^{\bot}\Phi_L\tilde S$. The second term here is of order $L^{-1}\ln L$ and we get
%$$
\begin{equation}\label{2.cut}
\|\Phi_LZ-Z_L\|_{H^2_b}\le CL^{-1/2}\|Z\|_{H^1_b}.
\end{equation}
%$$
We multiply equation \eqref{2.aue} on $\Phi_L$ and use that
$$
\|\Phi_L(\Dx-\lambda)Z -(\Dx-\lambda)(\Phi_LZ)\|_{L^2_b}\le CL^{-1}\|Z\|_{H^1_b}.
$$
Moreover,
%$$
\begin{multline}
\Phi_L P\divv(\bar u_\Xi\otimes Z+Z\otimes\bar u_\Xi)=\Phi_LP\div(\bar u_0\otimes (\Phi_LZ)+(\Phi_LZ)\otimes\bar u_0)+\\+\Phi_LP\divv(\sum_{\xi_j\ne0}\bar u_0\otimes Z+Z)\otimes\bar u_0)
\end{multline}
%$$
and using that the integral operator $P\divv$ has cubically decaying kernel, we infer that
$$
\|\Phi_L P\divv(\bar u_\Xi\otimes Z+Z\otimes\bar u_\Xi)-P\div(\bar u_0\otimes (\Phi_LZ)+(\Phi_LZ)\otimes\bar u_0)\|_{L^2_b}\!\le\! CL^{-3}\|Z\|_{H^1_b}.
$$
Analogously,
$$
\|\Phi_L\Pi_{\Xi,L}Z-\Pi_{0,L}(\Phi_L)Z\|_{L^2_b}\le CL^{-3}\|Z\|_{H^1_b}.
$$
Combining all of the obtained estimates together, we see that the function $Z_L$ solves
$$
\Cal L_0Z_L+\eb\Pi_{0,L}Z_L=g_L,
$$
where the function $g_L$ satisfies the estimate
$$
\|g_L\|_{L^2_b}\le CL^{-1/2}\|Z\|_{H^1_b}
$$
and Lemma \ref{Lem1.auL} gives us that
$$
\|Z_L\|_{H^2_b}\le CL^{-1/2}\|Z\|_{H^1_b}.
$$
Finally, using \eqref{2.cut} and the definition of $\Phi_L$, we arrive at
$$
\|Z\|_{H^2_b(B^{\alpha L}_0)}\le CL^{-1/2}\|Z\|_{H^1_b}
$$
which contradicts \eqref{2.0} if $L$ is large enough and finishes the proof of the lemma and the theorem as well.
\end{proof}
\begin{remark} If the function $g$ is more regular, i.e. $g\in H^s$ (resp. $H^s_{\theta_{x_0}}$, $H^sA^\infty_{\theta_{x_0}^{5/6}}$), then the solution $u$ belongs to $H^{s+2}$ (resp. $H^{s+2}_{\theta_{x_0}}$ or $H^{s+2}A^\infty_{\theta_{x_0}^{5/6}}$). for $s>0$. To get this result, it is sufficient to differentiate equation \eqref{2.au} and use bootstrapping arguments.
\end{remark}

\section{The auxiliary equation: the case of a bounded domain}\label{s3}
We now adapt the method to the case of bounded domains.
 To this end, we introduce one more big parameter $N$
 (which will be related to the number of vortices in what follows)
 and take $M:=cNL$ where $c$ is a fixed constant. We also consider a
 smooth bounded domain $\Omega$ such that $0\in\Omega$ (for simplicity,
 we assume that $\Omega$ is simply connected although
 this assumption is not important) and consider the scaled
 domain $\Omega_M:=M\Omega$. Finally, we put an extra condition
on the subset $\Xi\subset L\Bbb Z^2$, namely, that
%$$
\begin{equation}\label{3.bd}
d(\Xi,\partial\Omega_M)\ge NL=M/c,\ \ \Xi\in\Omega_M.
\end{equation}
%$$
In other words, we assume that the centers of all vortices are inside
 $\Omega_M$ and are situated at a distance of at least $M/c$
 from the boundary. Obviously, the number of such centers
 is finite $\#\Xi<\infty$ and, fixing  $c$, we may assume that the
 number of such vortices is proportional to $N^2$:
$$
\#\Xi\sim c_1 N^2.
$$
Then, we introduce a multivortex $\bar u_\Xi$ via \eqref{2.m} and
consider the linearization of the Navier--Stokes problem in
$\Omega_M$ with Dirichlet boundary conditions perturbed by the
approximative spectral projector:
%$$
\begin{equation}\label{3.lin}
\begin{cases}
(\Dx\!-\!\lambda)u\!-\!\div(\bar u_{\Xi}\otimes u\!+\!u\otimes\bar u_\Xi)\!+\!\Nx p\!+\!\eb \Pi_{\Xi,L}u\!=\!g:=\divv \tilde g\!+\!\hat g,\\ \divv u=0,\ u\big|_{\partial\Omega_M}=0,
\end{cases}
\end{equation}
%$$
where as before $\tilde g=\sum_{\xi_j\in\Xi}\tilde g_j$ and
$\supp \tilde g_j\subset B^1_{\xi_j}$. The main result of this
section is the following theorem.
\begin{theorem}\label{Th3.main} Let $L$, $N$ be large enough and
let $\eb>0$ be small enough. Then, for every $g\in L^2(\Omega_M)$,
problem \eqref{3.lin} possesses a unique solution $u\in H^2_\sigma(\Omega_M)$
and the following estimate holds:
%$$
\begin{equation}\label{3.main}
\|u\|_{H^2}\le C\|g\|_{L^2},
\end{equation}
%$$
where the constant $C$ depends on $\eb$, but is independent of $L$ and $N$ (recall that $M:=cNL$ and $c$ is fixed).
\end{theorem}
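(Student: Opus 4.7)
The plan is to reduce Theorem~\ref{Th3.main} to the whole-plane result Theorem~\ref{Th2.main} by exploiting the geometric assumption $d(\Xi,\partial\Omega_M)\ge M/c$. Since $\bar u_\Xi$, $\varphi_{j,L}$ and $\tilde\psi_{j,L}$ all decay at least cubically away from the lattice $\Xi$, the multi-vortex term and the projector $\Pi_{\Xi,L}$ are of order $L^{-3}$ (uniformly in $N$) in a collar of $\partial\Omega_M$. Hence near the boundary the problem is a small perturbation of the Stokes system with Dirichlet BC, while in the bulk it is governed by the whole-plane operator already analysed in Section~\ref{s2}.

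\textbf{Step 1 (Approximate solution via $\R^2$).} Extend $\tilde g$ and $\hat g$ by zero to $\R^2$ (the extension is legitimate because $\supp \tilde g_j\subset B^1_{\xi_j}\subset\Omega_M$), and apply Theorem~\ref{Th2.main} to obtain $u_\infty\in H^2_\sigma(\R^2)$ solving $\Cal L_\Xi u_\infty+\eb\Pi_{\Xi,L}u_\infty=g$ on $\R^2$ with $\|u_\infty\|_{H^2}\le C\|g\|_{L^2}$. The weighted estimate in $L^2A^\infty_{\theta^{5/6}_{x_0}}$ from that theorem, together with $d(\Xi,\partial\Omega_M)\ge M/c$, forces
$$
\|u_\infty\|_{H^2(B^1_{x})}\le C\|g\|_{L^2}(1+M/c)^{-5/2},\qquad x\in\partial\Omega_M,
$$
and an analogous pointwise bound on the trace; so $u_\infty|_{\partial\Omega_M}$ is small.

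\textbf{Step 2 (Boundary correction).} Let $\Phi$ be a smooth cut-off equal to $1$ on $\{d(x,\partial\Omega_M)>M/(2c)\}$ and vanishing at the boundary, and set $\tilde u:=P_{\Omega_M}(\Phi u_\infty)$, where the Leray--Helmholtz projector $P_{\Omega_M}$ restores the divergence-free and tangential boundary conditions. By \eqref{P-P} applied to $\Phi u_\infty$, the correction $P_{\Omega_M}(\Phi u_\infty)-\Phi u_\infty=\Nx Q$ with $\Dx Q=0$ and Neumann data controlled by $\|u_\infty|_{\partial\Omega_M}\|$, which is $O(M^{-5/2})$ small. Substituting $u=\tilde u+v$ into~\eqref{3.lin}, elementary bookkeeping (commutators of $\Delta$, $\divv(\bar u_\Xi\otimes\cdot)$ and $\Pi_{\Xi,L}$ with $\Phi$; the discrepancy $P-P_{\Omega_M}$) shows that $v$ must solve \eqref{3.lin} with a new right-hand side $g_{\rm new}$ satisfying $\|g_{\rm new}\|_{L^2}\le C(L^{-3}+M^{-5/2})\|g\|_{L^2}$; here the essential inputs are the cubic decay of $\bar u_\Xi$ and the kernel of $P\circ\divv$ (Lemma~\ref{Lem0.Pd}, Remark~\ref{Rem0.Pgood}), both of which make all nonlocal commutators small in the support of $\Nx\Phi$.

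\textbf{Step 3 (Iteration and uniqueness).} Iterating the construction $g\mapsto g_{\rm new}$ yields a geometric series (provided $L$ and $N$, equivalently $M=cNL$, are large enough) whose sum gives a solution $u\in H^2_\sigma(\Omega_M)$ of \eqref{3.lin} satisfying \eqref{3.main} with a constant independent of $L$, $N$. Uniqueness is free: the operator on the left-hand side of \eqref{3.lin} is a relatively compact perturbation of the Stokes--Dirichlet operator $\Dx-\lambda$ on the bounded domain $\Omega_M$, hence Fredholm of index zero on $H^2_\sigma(\Omega_M)$, and surjectivity established above implies injectivity.

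\textbf{Main obstacle.} The delicate point is to produce error estimates that are uniform in both $L\to\infty$ and $N\to\infty$; naive perturbation fails because the implicit constants in boundary/Stokes elliptic estimates typically depend on the domain $\Omega_M$. The remedy is to keep all intermediate estimates in the weighted scales $L^2_{\theta_{x_0}}$, $L^2A^\infty_{\theta^{5/6}_{x_0}}$ of Theorem~\ref{Th2.main} and to use that $\Nx\Phi$ is supported where $\bar u_\Xi,\Pi_{\Xi,L}$ have decayed by $(NL)^{-3}$; the constant $c$ in $M=cNL$ is then fixed large enough so that the boundary defect of $u_\infty$ together with the commutator errors absorbs into the left-hand side via a Neumann series.
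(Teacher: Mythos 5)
Your overall strategy (solve on $\R^2$ via Theorem \ref{Th2.main}, correct at the boundary, iterate, get uniqueness from Fredholm theory since $\#\Xi<\infty$) is the same as the paper's, but two steps of your argument have genuine gaps. First, the treatment of the non-localized part $\hat g$ breaks down: the boundary smallness of $u_\infty$ that you extract from the weighted estimates of Theorem \ref{Th2.main} holds only when the right-hand side is supported at distance $\sim M/c$ from $\partial\Omega_M$. That is true for $\tilde g$ (and for the commutator terms produced by the localized $\bar u_\Xi$), but $\hat g$ is an arbitrary $L^2$ function on $\Omega_M$ and may be concentrated in a collar of the boundary; then $\|u_\infty\|_{H^2(B^1_x)}$ for $x\in\partial\Omega_M$ is of order $\|g\|_{L^2}$, not $O(M^{-5/2})$, and your Step 1 estimate fails. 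The paper removes this obstruction before ever going to $\R^2$: it first solves the $\eb$-perturbed Stokes problem $(\Dx-\lambda)\hat u+\nabla\hat p+\eb\Pi_{\Xi,L}\hat u=\hat g$ in $\Omega_M$ with Dirichlet conditions (Lemma \ref{Lem3.lin}, uniform in $L,M$ by energy estimate plus Stokes regularity), and only the leftover term $\bar u_\Xi\otimes\hat u+\hat u\otimes\bar u_\Xi$, which \emph{is} localized near the vortex centers, is fed into the whole-plane solve. Without this reduction your boundary-layer smallness cannot be started.

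Second, your boundary correction $\tilde u:=P_{\Omega_M}(\Phi u_\infty)$ does not do what you need: $\Phi u_\infty$ is not divergence free ($\divv(\Phi u_\infty)=\nabla\Phi\cdot u_\infty$), and applying $P_{\Omega_M}$ restores solenoidality but only enforces $u\cdot n=0$, not the full Dirichlet condition required in \eqref{3.lin}; the gradient corrector has a nonvanishing tangential trace, so $v=u-\tilde u$ solves an inhomogeneous boundary-value problem, not \eqref{3.lin} with a new bulk right-hand side. Converting small $H^{3/2}(\partial\Omega_M)$ boundary data into a small divergence-free $H^2$ field, with constants uniform in $M$, is exactly the delicate content of the paper's Lemma \ref{Lem3.ext} (the decomposition $u_b=\nabla U_b+\nabla^\perp V_b$, harmonic Neumann problem for $U_b$, scale-tracking of the extension for $V_b$), and the scaling there genuinely loses a power: the achievable smallness is $CM^{-1/2}\|g\|_{L^2}$, not the $L^{-3}+M^{-5/2}$ you claim. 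The $M^{-1/2}$ rate is still enough to close the Neumann-series iteration, but your "elementary bookkeeping" hides precisely the step where uniformity in $M$ must be proved, so as written the proof is incomplete at this point as well. The uniqueness argument via Fredholm index zero is fine and coincides with the paper's remark.
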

\begin{proof} The proof of this theorem is similar to the proof of
Theorem~\ref{Th2.main} (and is actually based on it), but we need to
add some extra estimates related with the influence of the boundary.
 Note that the number of vortices is now finite, so we need not
to care of the uniqueness, so only the existence of a solution
should be verified. The first step is exactly the same as in
Theorem~\ref{Th2.main}, namely, to get rid of function $\hat g$.
\begin{lemma}\label{Lem3.lin} For sufficiently small $\eb>0$ and
sufficiently large $L$ and $N$ the problem
%$$
\begin{equation}\label{3.sp}
(\Dx -\lambda)\hat u+\nabla \hat p+\eb\Pi_{\Xi,L}\hat u=\hat g,\ \
\divv\hat u=0,\ \ \hat u\big|_{\partial\Omega_M}=0
\end{equation}
%$$
has a unique solution $\hat u\in H^2_{\sigma}(\Omega_M)$ and the
following estimate holds
%$$
\begin{equation}\label{3.linest}
\|\hat u\|_{H^2(\Omega_M)}\le C\|\hat g\|_{L^2(\Omega_M)},
\end{equation}
%$$
where the constant $C$ is independent of $L$ and $N$.
\end{lemma}
\begin{proof} It is sufficient to verify the lemma for $\eb=0$ only
(since the operator $\Pi_{\Xi,L}$ is uniformly bounded, the result
for small $\eb$ will follow via the perturbation arguments).
The uniform estimate for the $H^1$-norm of the solution $\hat u$
follows from the basic  energy estimate (obtained by multiplication
of the equation by $\hat u$). After that the uniform estimate for the
$H^2$-norm follows
{\color{black}from the elliptic regularity of the Stokes operator
if we put the term $-\lambda\hat u$ to the right-hand side.}
% by scaling arguments e.g., from the fact that the
%Stokes operator generates an analytic semigroup in $L^2_\sigma(\Omega)$.
%This finishes the proof of the lemma.
\end{proof}
Thus, the function $v:=u-\hat u$ solves equation \eqref{3.lin} with $\hat g=0$ and $\tilde g$ replaced by
$$
\tilde g+\tilde R,\ \ \tilde R:=\bar u_\Xi\otimes \hat u+\hat u\otimes\bar u_\Xi
$$
and the remainder $\tilde R$ satisfies
$$
\|\tilde R\|_{H^1}\le C\|\hat g\|_{L^2},
$$
which, in turn, gives us the desired reduction to the case $\hat g=0$.
\par
At the next step, we solve equation \eqref{3.lin} for $v$ in
the whole plane using Theorem \ref{Th2.main}. Let $\tilde u$ be
a solution of this problem. Then, $w:=u-\hat u-\tilde u$ solves
the non-homogeneous analogue of \eqref{3.lin}
%$$
\begin{equation}\label{3.linn}
\begin{cases}
(\Dx\!-\!\lambda)w\!-\!\div(\bar u_{\Xi}\otimes w\!+\!w\otimes\bar u_\Xi)\!+\!\Nx p\!+\!\eb \Pi_{\Xi,L}w\!=\!0\!,\\ \divv w=0,\ w\big|_{\partial\Omega_M}=\tilde u\big|_{\partial\Omega_M}.
\end{cases}
\end{equation}
%$$
To complete the proof of the theorem, we need to construct an approximate
solution of problem \eqref{3.linn}. To this end, we need to check that
$\tilde u\big|_{\partial M}$ is small enough and to construct a divergent
free extension of it inside  the domain without losing the smallness
property. This will be done in the next lemmata.
\begin{lemma}\label{Lem3.small} For sufficiently large $N$ and $L$, we have
%$$
\begin{equation}
\|\tilde u\big|_{\partial\Omega_M}\|_{H^{3/2}(\partial\Omega_M)}\le CM^{-1}\|g\|_{L^2},
\end{equation}
%$$
where the constant $C$ is independent of $M$ and $L$.
\end{lemma}
\begin{proof} According to the weighted estimate of Theorem \ref{Th2.main},
$$
\|\tilde u\|_{H^2(B^1_{x_0})}^2\le C\|\tilde u\|^2_{H^2_{\theta_{x_0}}}\le
C\int_{y\in\R^2}\|\tilde g+\tilde R\|^2_{H^1(B^1_y)}\theta(y-x_0)\,dy.
$$
Using the fact that the supports of functions $\tilde g$ and $\tilde R$ are
at a distance  of at least $M/c$ from the boundary, after the
integration over $x_0\in \Cal O_1(\partial\Omega_M)$
(which is the $1$-neighbourhood of the boundary)
{\color{black}and using \eqref{0.eq.1}}, we get
$$
\|\tilde u\|^2_{H^2(\Cal O_1(\partial\Omega_M))}\le CM^{-3}|\Cal O_1(\partial\Omega_M)|\|\tilde g+\tilde R\|_{H^1}^2\le CM^{-2}\|g\|^2_{L^2(\Omega_M)}
$$
and the desired estimate follows from the trace theorem. This finishes the proof of the lemma.
\end{proof}
\begin{lemma}\label{Lem3.ext} There exists a divergence free extension $u_b\in H^2_\sigma(\Omega_M)$ which satisfies
%$$
\begin{equation}\label{3.ext}
u_b\big|_{\partial\Omega_M}=-\tilde u\big|_{\partial\Omega_M},\ \
\|u_b\|_{H^2(\Omega_M)}\le CM^{-1/2}\|g\|_{L^2}.
\end{equation}
%$$
where the constant $C$ is independent of $M$ and $L$.
\end{lemma}
\begin{proof} We use the standard extension procedure
(see, for instance, \cite[Proposition~I.2.4]{Temam1},
 presenting the desired function $u_b$ in the form
\begin{equation}\label{ub}
u_b=\nabla U_b+\nabla^\bot V_b,
\end{equation}
where $U_b$ is a harmonic function, which solves
$$
\Dx U_b=0,\ \ \partial_n U_b\big|_{\partial\Omega_M}=-\tilde u\big|_{\partial\Omega_M}\cdot n.
$$
Since $\tilde u$ is divergence free, the solvability condition for
$U_b$ is satisfied and we only need to control the dependence
of the norms of $U_b$ on $M$. To get such estimates, we introduce
the scaled function $\hat U_b(x):=U_b(M x)$ which is harmonic in $\Omega$.
 Then, using the fact that the homogeneous $H^s$-norms of $U_b$ in the
domain are scaled as the  homogeneous $H^{s-3/2}$ norms of $\partial_n U_b$
on the boundary for $s=2,3$, together with the elliptic regularity in $\Omega$,
 we end up with
%$$
\begin{equation}\label{3.hom}
\|U_b\|_{\dot H^2(\Omega_M)}+\|U_b\|_{\dot H^3(\Omega_M)}\le C\|\tilde u\|_{H^{3/2}(\partial\Omega_M)}\le C_1M^{-1}\|g\|_{L^2}.
\end{equation}
%$$
For $s=1$, since we do not have the uniform control of $H^{-1/2}$-norm
on the boundary,
%\begin{color}{red}
we use the non scale invariant  estimate (following from the elliptic regularity)
$$
\|\hat U_b\|_{\dot H^1(\Omega)}\le C\|\partial_n \hat U_b\|_{L^2(\partial\Omega)},
$$
%\end{color}
which gives us
$$
\|U_b\|_{\dot H^1(\Omega_M)}\le CM^{1/2}\|\partial_n U_b\|_{L^2(\Omega_M)}.
$$
Thus, we get
%$$
\begin{equation}\label{3.Ub}
\|\nabla U_b\|_{H^2(\Omega_M)}\le CM^{-1/2}\|g\|_{L^2(\Omega_M)}.
\end{equation}
%$$
It only remains to construct the function $V_b$.
We see from~\eqref{ub} that
$$
u_b\cdot n=\partial_nU_b+\partial_\tau V_b,
$$
so that
$$
\partial_\tau V_b=u_b\cdot n-\partial_nU_b=
-\tilde u\big|_{\partial\Omega_M}\cdot n-\partial_nU_b=0,
$$
and without loss of generality we may assume that $V_b\big|_{\partial\Omega_M}=0$.

Next, from \eqref{ub} and \eqref{3.Ub} we obtain that on $\partial\Omega_M$
$$
\partial_nV_b=-\tilde u\cdot\tau-\partial_\tau U_b\in
H^{3/2}(\partial\Omega_M),\quad
\|\partial_n V_b\|_{H^{3/2}(\partial\Omega_M)}\le CM^{-1/2}\|g\|_{L^2}.
$$
The construction of $V_b$ is at our disposal, therefore we may set
$\partial^2_nV_b=0$ on $\partial\Omega_M$ and apply the extension
theorem for Sobolev spaces, which says that there exists a scale
invariant constant $K$ and an extension of $V_b$ from the boundary to
$\Omega_M$ (which we still denote by $V_b$) such that
\begin{multline*}
\|V_b\|_{H^3(\Omega_M)}\le K\left(\|V_b\|_{H^{5/2}(\partial\Omega_M)}+
\|\partial_nV_b\|_{H^{3/2}(\partial\Omega_M)}+
\|\partial^2_nV_b\|_{H^{1/2}(\partial\Omega_M)}\right)\\
=K
\|\partial_nV_b\|_{H^{3/2}(\partial\Omega_M)}
\le CM^{-1/2}\|g\|_{L^2},
\end{multline*}
where the constant $C$ is independent of $M$. This finishes the proof of the lemma.
%\end{color}

%We note that
%$$
%\partial_\tau V_b\big|_{\partial\Omega_M}=u_n\cdot n\big|_{\partial\Omega_M}-\partial_n U_b\big|_{\partial\Omega_M}=0,
%$$
%so, without loss of generality, we may assume that $V_b\big|_{\partial\Omega_M}=0$. Moreover, it follows from \eqref{3.Ub} and \eqref{3.ext} and the trace theorem that
%$$
%\|\partial_n V_b\|_{H^{3/2}(\partial\Omega_M)}+\|\partial^2_n V_b\|_{H^{1/2}(\partial\Omega_M)}\le CM^{-1/2}\|g\|_{L^2}
%$$
%and, therefore, due to the extension theorem for scsalar functions
%$$
%\|V_b\|_{H^3(\Omega_M)}\le C M^{-1/2}\|g\|_{L^2},
%$$
%where the constant $C$ is independent of $M$. This finishes the proof of the lemma.
\end{proof}

%\begin{color}{red}
For future references it is convenient to
single out the key estimate in the previous lemma as the following
corollary.

\begin{corollary}\label{Cor:M1/2}
Let $v=\nabla_xQ$, $\Delta_x Q=0$ in $\Omega_M$ and
let $v\cdot n=\partial_{ n}Q=\eta$ on $\partial\Omega_M$,
where $\int_{\partial\Omega_M}\eta(l)dl=0$.
Then
\begin{equation}\label{M1/2}
\|v\|_{H^2(\Omega_M)}\le CM^{1/2}\|\eta\|_{H^{3/2}(\partial\Omega_M)}.
\end{equation}
\end{corollary}

%\end{color}

We are now ready to finish the proof of the theorem. We fix
$$
u_{app}:=\hat u+\tilde u+u_b.
$$
Then, according to the obtained estimates
$$
\|u_{app}\|_{H^2(\Omega_M)}\le
C\(\|\tilde g\|_{H^1(\Omega_M)}+\|\hat g\|_{L^2(\Omega_M)}\)
$$
and the remainder $R=\divv \tilde R+\hat R$ satisfies
$$
\|\tilde R\|_{H^1(\Omega_M)}+\|\hat R\|_{L^2(\Omega_M)}\le
CM^{-1/2}\(\|\tilde g\|_{H^1(\Omega_M)}+\|\hat g\|_{L^2(\Omega_M)}\).
$$
Iterating this procedure, we get the exact solution for the equation
\eqref{3.lin} and finish the proof of the theorem.
\end{proof}
\begin{remark}\label{Rem3.bad} Unfortunately, in contrast to the case
of $\Omega=\R^2$, we do not have here the weighted and uniformly
local analogue of estimate \eqref{3.main}. The key problem here is
that we do not know how to verify the weighted estimate for the Stokes problem
$$
(\Dx -\lambda)u+\Nx p=g,\ \ \divv u=0,\ \ u\big|_{\partial\Omega_M}=0.
$$
We believe that this estimate remains true and return to this problem somewhere else.
\end{remark}

\section{The Lyapunov--Schmidt reduction}\label{s4}
The aim of this section is to get rid of the auxiliary spectral
projector $\Pi_{\Xi,L}$ and to find the invariant spectral subspaces
for the operator $\Cal L_\Xi$. As before, we will consider two cases,
namely, when $\Omega=\R^2$ and $\Omega=\Omega_M$. In the second case,
we naturally replace the operator $\Cal L_\Xi$ by
$$
\Cal L_{\Xi,\Omega_M}v:=P_{\Omega_M}(\Dx-\lambda)v-
P_{\Omega_M}\divv(\bar u_\Xi\otimes v+v\otimes\bar u_\Xi),\ \
$$
where $P_{\Omega_M}: L^2(\Omega_M)\to L^2_\sigma(\Omega_M)$ is the
Leray projector to the divergence free vector fields in $L^2(\Omega_M)$.
To be more precise,
$$
\Cal L_{\Xi,\Omega_M}: D(\Cal L_{\Xi,\Omega_M}):=
L^2_\sigma(\Omega_M)\cap H^1_0(\Omega_M)\cap H^2(\Omega_M)\to L^2_\sigma(\Omega_M).
$$
Note that, by the construction $\Pi_{\Xi,L}$ is the projector not
only in $L^2_\sigma(\R^2)$ but also in $L^2_\sigma(\Omega_M)$, so we may
split $L^2_\sigma$ in a direct sum
$$
L^2_\sigma(\Omega)=(1-\Pi_{\Xi,L})L^2_\sigma(\Omega)\oplus\Pi_{\Xi,L}L^2_\sigma(\Omega):=
H_{hyp}\oplus H_{neu},
$$
where $\Omega=\R^2$ or $\Omega=\Omega_M$ and write out the operator
$\Cal L_\Xi$ as a matrix
%$$
\begin{equation}\label{4.split}
\Cal L_\Xi\!=\!\!\(\begin{matrix}(1-\Pi_{\Xi,L})\Cal L_\Xi(1-\Pi_{\Xi,L}),
& (1-\Pi_{\Xi,L})\Cal L_\Xi\Pi_{\Xi,L}\\ \Pi_{\Xi,L}\Cal L_\Xi(1-\Pi_{\Xi,L}),
& \Pi_{\Xi,L}\Cal L_\Xi\Pi_{\Xi,L}\end{matrix}\)\!:=\!\!\(\begin{matrix} \Cal L_{11},
&\!\Cal L_{12}\\\Cal L_{21},&\!\Cal L_{22}\end{matrix}\)
\end{equation}
%$$
and the analogous  presentation works for $\Cal L_{\Xi,\Omega_M}$.
Note that the spaces $H_{neu}$ and $H_{hyp}$ are not invariant with
respect to operator $\Cal L_\Xi$ (resp $\Cal L_{\Xi,\Omega_M}$), but
as it will be shown below, they are almost invariant if $L$ is large enough.
Therefore, since the exponential dichotomy is preserved
under small perturbations, we derive from this fact the
existence of truly invariant spectral subspaces isomorphic
to $H_{hyp}$ and $H_{neu}$. The next theorem can be considered
as the main result of this section.
\begin{theorem}\label{Th4.main} Let $L$ be large enough. Then,
there exist bounded  (in $L^2_\sigma(\R^2)$ and in $L^2_{b,\sigma}(\R^2)$)
operators
$$
K_{hyp}: H_{hyp}\to H_{neu},\ \ K_{neu}: H_{neu}\to H_{hyp}
$$
such that the spaces
%$$
\begin{equation}\label{4.inv}
M_{hyp}:=\{u+K_{hyp}u,\ u\in H_{hyp}\},\ \ M_{neu}:=\{u+K_{neu}u,\ \ u\in H_{neu}\}
\end{equation}
%$$
are invariant with respect to $\Cal L_\Xi$. Moreover,
%$$
\begin{equation}\label{4.spec}
\sigma\(\Cal L_{\Xi}\big|_{M_{hyp}}\)\cap B^\delta_0=\varnothing,\ \
\sigma\(\Cal L_{\Xi}\big|_{M_{neu}}\)\subset B^{CL^{-1/2}}_0,
\end{equation}
%$$
where the positive constants $C$ and $\delta$ are independent of $L$.
\par
The analogous result also holds for the operator
$\Cal L_{\Xi,\Omega_M}$ in  $L^2_\sigma(\Omega_M)$.
\end{theorem}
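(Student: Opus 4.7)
The strategy is the standard Lyapunov--Schmidt / graph-transform construction: one shows that in the block decomposition \eqref{4.split} the diagonal blocks $\Cal L_{11}$ and $\Cal L_{22}$ are spectrally well-separated while the off-diagonal blocks $\Cal L_{12},\Cal L_{21}$ are small of order $L^{-1/2}$, and then produces the invariant graphs $M_{hyp},M_{neu}$ by solving Riccati-type fixed-point equations for $K_{hyp},K_{neu}$. The first step is to establish
$$
\|\Cal L_{12}\|_{H_{neu}\to H_{hyp}}+\|\Cal L_{21}\|_{H_{hyp}\to H_{neu}}+\|\Cal L_{22}\|_{H_{neu}\to H_{neu}}\le CL^{-1/2},
$$
both in $L^2_\sigma$ and in $L^2_{b,\sigma}$. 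The single-vortex error $\|\Cal L_0\varphi_{j,L}\|_{L^2}=O(L^{-1/2})$ comes directly from \eqref{1.app} and its localized version in Remark~\ref{Rem1.app-loc}, while the cross-vortex contributions are controlled by summing the pointwise estimates \eqref{1.uni}--\eqref{1.cl} shifted to the centers $\xi_j$, exactly as in the bookkeeping of $\tilde R,\hat R$ in Lemma~\ref{Lem2.e-small}, via Lemmata~\ref{Lem0.point} and~\ref{Lem1.disc}.

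Next, I would prove uniform invertibility of $\Cal L_{11}:H_{hyp}\to H_{hyp}$. In block form, the auxiliary operator of Theorem~\ref{Th2.main} reads
$$
\Cal L_\Xi+\eb\Pi_{\Xi,L}=\begin{pmatrix}\Cal L_{11}&\Cal L_{12}\\ \Cal L_{21}&\Cal L_{22}+\eb\end{pmatrix},
$$
and its right lower block $\Cal L_{22}+\eb$ is invertible on $H_{neu}$ with bounded inverse once $L$ is so large that $\|\Cal L_{22}\|<\eb/2$. A Schur-complement argument, combined with the invertibility of the full matrix (Theorem~\ref{Th2.main}, or Theorem~\ref{Th3.main} in the $\Omega_M$-case) and the $O(L^{-1/2})$ off-diagonal bounds, then forces $\Cal L_{11}$ to be invertible with $\|\Cal L_{11}^{-1}\|\le C$ uniformly in $L$.

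The invariance of $M_{hyp}=\{u+K_{hyp}u\,:\,u\in H_{hyp}\}$ under $\Cal L_\Xi$, after projecting onto $H_{hyp}$ and $H_{neu}$, is equivalent to the Riccati identity $K_{hyp}\Cal L_{11}-\Cal L_{22}K_{hyp}=\Cal L_{21}-K_{hyp}\Cal L_{12}K_{hyp}$, which I would rewrite as the fixed-point problem
$$
K_{hyp}=\bigl(\Cal L_{21}+\Cal L_{22}K_{hyp}-K_{hyp}\Cal L_{12}K_{hyp}\bigr)\Cal L_{11}^{-1}.
$$
Since $\|\Cal L_{11}^{-1}\|=O(1)$ and $\|\Cal L_{21}\|,\|\Cal L_{12}\|,\|\Cal L_{22}\|=O(L^{-1/2})$, the right-hand side is a contraction on the ball $\{\|K\|\le 2C\|\Cal L_{21}\|\}$ for $L$ sufficiently large, producing a unique small solution with $\|K_{hyp}\|=O(L^{-1/2})$. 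The construction of $K_{neu}$ is completely parallel, with the roles of the blocks interchanged, and yields $\|K_{neu}\|=O(L^{-1/2})$ as well.

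On $M_{hyp}$ the induced operator is similar to $\Cal L_{11}+\Cal L_{12}K_{hyp}$, an $O(L^{-1/2})$ perturbation of $\Cal L_{11}$, so that its spectrum inherits the spectral gap near zero; on $M_{neu}$ the induced operator equals $\Cal L_{22}+\Cal L_{21}K_{neu}=O(L^{-1/2})$, which produces the second inclusion in \eqref{4.spec}. The bounded-domain case is handled identically once Theorem~\ref{Th3.main} replaces Theorem~\ref{Th2.main}. The main technical difficulty I anticipate is the uniformly local ($L^2_{b,\sigma}$) version of the off-diagonal bounds, because the adjoint eigenfunction $\psi_0$ decays only quadratically (cf.\ Remark~\ref{Rem1.app-loc}); getting around it requires going through $\tilde\psi_{0,L}$, which is exponentially localized, together with the integrable kernel structure of $\Nx\circ P$ supplied by Lemma~\ref{Lem0.Pd}.
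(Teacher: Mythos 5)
Your outline is correct and, in two of its three steps, coincides with the paper's proof: the block smallness $\|\Cal L_{12}\|+\|\Cal L_{21}\|+\|\Cal L_{22}\|\le CL^{-1/2}$ is exactly Lemma \ref{Lem4.small}/Corollary \ref{Cor4.small}, and your Riccati fixed-point equations for $K_{hyp},K_{neu}$, together with the identification of the induced operators $\Cal L_{11}+\Cal L_{12}K_{hyp}$ and $\Cal L_{21}K_{neu}+\Cal L_{22}$, are literally the paper's final step. Where you genuinely diverge is the middle step, the uniform invertibility of $\Cal L_{11}$ (Lemma \ref{Lem4.inv}). The paper proves this by hand: it characterizes the image $I=\Cal L_{\Xi,\Omega_M}(H_{hyp}\cap D)$ via \eqref{4.I}, reduces the invertibility of $(1-\Pi_{\Xi,L}):I\to H_{hyp}$ to the linear system \eqref{4.A}, and controls the matrix $(B_{jk})$ in $l^2$ using the localization of $u_{\varphi_{j,L}}$ (Proposition \ref{Prop4.weight}, including the boundary estimate \eqref{4.nice}) and the $P$ versus $P_{\Omega_M}$ comparison (Proposition \ref{Prop4.M}). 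You instead observe that $\Cal L_\Xi+\eb\Pi_{\Xi,L}$ has the block form with lower-right entry $\Cal L_{22}+\eb$, which is uniformly invertible on $H_{neu}$ for large $L$, and apply the Schur-complement (Banachiewicz) factorization: since the full auxiliary operator is uniformly invertible by Theorem \ref{Th2.main} (resp.\ Theorem \ref{Th3.main}), the $(1,1)$ block of its inverse equals $\bigl(\Cal L_{11}-\Cal L_{12}(\Cal L_{22}+\eb)^{-1}\Cal L_{21}\bigr)^{-1}$, so $\Cal L_{11}$ is an $O(L^{-1}/\eb)$ perturbation of a uniformly invertible operator and hence uniformly invertible. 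This argument is sound (the projector $\Pi_{\Xi,L}$ preserves $H^2$, $H_{neu}$ lies in the domain, and all blocks touching $H_{neu}$ are bounded with norms controlled by Corollary \ref{Cor4.small}), and it buys a substantially shorter route that bypasses Proposition \ref{Prop4.weight} and the $l^2$-matrix analysis entirely; what the paper's longer route buys is the explicit localization of $u_{\varphi_{j,L}}$ and of the image $I$, which is not needed for the theorem as stated. One caveat: your first step is where most of the paper's actual labour sits in the bounded-domain case — the smallness of the blocks for $\Cal L_{\Xi,\Omega_M}$ requires controlling $P-P_{\Omega_M}$ by harmonic corrections (Proposition \ref{Prop4.M}, Corollary \ref{Cor:M1/2}, the Green-function bound \eqref{4.ker}), the boundary pairing terms, and the counting $\#\Xi\sim N^2$ against the $O(M^{-2})$ errors; saying the case $\Omega=\Omega_M$ is ``handled identically'' once Theorem \ref{Th3.main} is available understates this, although the statements you invoke are precisely Lemma \ref{Lem4.small} and Corollary \ref{Cor4.small}, so the architecture of your argument is unaffected.
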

\begin{proof} Note that the operator $\Cal L_\Xi$ is
unbounded, so the invariance of the space $M_{hyp}$ should be understood as
$$
\Cal L_\Xi: D(\Cal L_\Xi)\cap M_{hyp}\to M_{hyp}.
$$
As we will see later, the operator $\Cal L_\Xi\big|_{M_{neu}}$
is bounded, so the invariance may be understood in a usual sense.
The next lemma, which establishes the smallness of the operators
$\Cal L_{12},\Cal L_{22}$ and $\Cal L_{21}$ is one of the key tools
for proving the theorem.
\begin{lemma}\label{Lem4.small} Let $L$ be large enough. Then,
%$$
\begin{equation}\label{4.small}
\|\Pi_{\Xi,L}\Cal L_\Xi\|_{\Cal L(H^2,H^2)}+
\|\Cal L_\Xi\Pi_{\Xi,L}\|_{\Cal L(L^2,H^2)}\le CL^{-1/2},
\end{equation}
where the constanct $C$ is independent of $L$ and the same
result holds for the $L^2_b$-spaces.
\par
Moreover, the analogous estimates hold for the operator $\Cal L_{\Xi,\Omega_M}$
in the space $L^2_\sigma(\Omega_M)$.
\end{lemma}
\begin{proof} We give the proof for the most complicated case of the
operator $\Cal L_{\Xi,\Omega_M}$. The case $\Omega=\R^2$ is analogous,
but simpler since we need not to control the difference $P-P_{\Omega_M}$
and the result follows in a straightforward way from from Lemma \ref{Lem1.cut}.
Let us start with the operator $\Cal L_{\Xi,\Omega_M}\Pi_{\Xi,L}$.
By definition
%$$
\begin{equation}\label{4.term}
\Cal L_{\Xi,\Omega_M}\Pi_{\Xi,L}u=\alpha_L\sum_{ j}(u,\psi_{j,L})\Cal L_{\Xi,\Omega_M}\varphi_{j,L}=\alpha_L\sum_{j}(u,\tilde\psi_{j,L})\Cal L_{j,\Omega_M}\varphi_{j,L},
\end{equation}
%$$
since the supports of $\bar u_i$ and $\varphi_{j,L}$ do not intersect if $i\ne j$. Here and below
$$
\Cal L_{j,\Omega_M}u:=P_{\Omega_M}(\Dx-\lambda)u-P_{\Omega_M}\divv(\bar u_j\otimes u+u\otimes\bar u_j)
$$
and the operator $\Cal L_j$ is defined analogously replacing the projector $P_{\Omega_M}$ by $P$ and extending the function $u$ by zero for $x\notin\Omega_M$.
\par
At the next step, we get rid of $P_{\Omega_M}$. To this end, we note that
$$
(\Cal L_{j,\Omega_M}-\Cal L_j)\varphi_{j,L}=
(P_{\Omega_M}-P)((\Dx-\lambda)\varphi_{j,L}-\divv(\bar u_j\otimes\varphi_{j,L}+\varphi_{j,L}\otimes\bar u_j))=\Nx Q,
$$
where $Q$ is a harmonic function satisfying
%$$
\begin{multline*}
\Dx Q=0,\ \partial_n Q\big|_{\partial\Omega_M}=\\=-P((\Dx-\lambda)\varphi_{j,L}-\divv(\bar u_j\otimes\varphi_{j,L}+\varphi_{j,L}\otimes\bar u_j))\big|_{\partial\Omega_M}\!\cdot n=:h_M\big|_{\partial\Omega_M}\cdot n.
\end{multline*}
%$$
%\begin{color}{red}
Since the kernel of $\Nx\circ P$ and all its $x$-derivatives decay cubically,
 $P\varphi_{j,L}=\varphi_{j,L}$  and the distance from any $\xi_j\in\Xi$
  to the boundary is at least $M/c$, we have for each
  Sobolev norm
$$
\|h_M\|^2_{H^k(\mathcal O_1(\partial\Omega_M))}\le C_k|\mathcal O_1(\partial\Omega_M)|
\frac1{M^6}\le C\frac 1{M^5}.
$$
For $k=2$ the trace theorem gives that
$$
\|h_M\|_{H^{3/2}(\partial\Omega_M)}\le CM^{-5/2}
$$
and Corollary~\ref{Cor:M1/2} gives that
%$$
\begin{equation}\label{4.Q}
\|\Nx Q\|_{H^2(\Omega_M)}\le CM^{-2}.
\end{equation}
%\end{color}

%Since the kernel of $\Nx\circ P$ decays cubically,
% $P\varphi_{j,L}=\varphi_{j,L}$  and the distance from any $\xi_j\in\Xi$
%  to the boundary is at least $M/c$, we have
%$$
%\|h_M\|_{H^{3/2}(\partial\Omega_M)}\le CM^{-5/2}
%$$
%and arguing as in the derivation of estimate \eqref{3.Ub}, we arrive at
%%$$
%\begin{equation}\label{4.Q}
%\|\Nx Q\|_{H^2(\Omega_M)}\le CM^{-2}.
%\end{equation}

%$$
Therefore, using the fact that $\#\Xi\sim N^2\sim M^2/L^2$, we have
$$
\|\sum_j(u,\tilde\psi_{j,L})(\Cal L_j-\Cal L_{j,\Omega_M})
\varphi_{j,L}\|_{H^2(\Omega_M)}\le CL^{-2}\|u\|_{L^2(\Omega_M)}
$$
and we only need to estimate the modified right-hand side of
\eqref{4.term}, where $\Cal L_{j,\Omega_M}$ is replaced by $\Cal L_j$.
To this end, we use estimate \eqref{1.ap-loc} with $s=4$ which gives us
the point-wise estimates
$$
|(\Cal L_j\varphi_{j,L})(x)|+|(\Nx\Cal L_j\varphi_{j,L})(x)|+
|(\Nx\Nx\Cal L_j\varphi_{j,L})(x)|\le CL^{-1/2}\theta(x-\xi_j)^{5/6}.
$$
%\begin{color}{red}
Thus, we have to estimate the following quantity
%\end{color}
 %and, therefore,
$$
I:=\|\sum_j(u,\tilde\psi_{j,L})\Cal L_j\varphi_{j,L}\|_{H^2}^2= \sum_{i,j}(u,\tilde\psi_{j,L})(u,\tilde\psi_{i,L})  (\Cal L_j\varphi_{j,L},\Cal L_{i}\varphi_{i,L})_{H^2}.
$$
Using the above point-wise estimates, we see that
$$
|(\Cal L_j\varphi_{j,L},\Cal L_{i}\varphi_{i,L})_{H^2}|\le CL^{-1}(\theta^{5/6}_{\xi_i},\theta_{\xi_j}^{5/6})\le C_1L^{-1}\theta^{5/6}(\xi_i-\xi_j).
$$
Finally, using that the functions $\tilde \psi_{j,L}$ are orthogonal, the Bessel inequality gives
%$$
\begin{multline}
I\le CL^{-1}\sum_{i.j}((u,\tilde\psi_{i,L})^2+(u,\tilde\psi_{j,L})^2)\theta^{5/6}(\xi_i-\xi_j)\le \\\le C_1L^{-1}\sum_j(u,\tilde\psi_{j,L})^2\le C_2L^{-1}\|u\|^2_{L^2}
\end{multline}
%$$
and the desired estimate for $\Cal L_{\Xi,\Omega_M}\Pi_{\Xi,L}$ is proved.

Let us now consider the operator $\Pi_{\Xi,L}\Cal L_{\Xi,\Omega_M}$. Indeed,
using the {\color{black}`differentiated' estimate \eqref{1.uni}
in the form $|\nabla_x\nabla_x\varphi_{0,L}(x)|\le C\theta(x)$}
 and arguing as before, we have
%$$
\begin{equation}\label{est}
\|\Pi_{\Xi,L}\Cal L_{\Xi,\Omega_M}u\|_{H^2(\Omega_M)}^2\le C\sum_j(\Cal L_{\Xi,\Omega_M}u,\tilde \psi_{j,L})^2,
\end{equation}
%$$
so, we only need to estimate the scalar products in the right-hand side. To this end, we write
%$$
\begin{multline}
(\Cal L_{\Xi,\Omega_M}u,\tilde\psi_{j,L})\!\!=\!\!(u,\Cal L_{j,\Omega_M}^*(P_{\Omega_M}\tilde\psi_{j,L}))\!-\!\!\sum_{i\ne j}(\bar u_i\otimes u+u\otimes\bar u_i,\!\Nx P_{\Omega_M}\tilde\psi_{j,L})+\\+(\partial_nu,
P_{\Omega_M}\tilde \psi_{j,L})_{\partial\Omega_M}=:J_{1,j}+J_{2,j}+J_{3,j}.
\end{multline}
%$$
At the next step, we replace $P_{\Omega_M}\tilde \psi_{j,L}$ by $P\tilde \psi_{j,L}$.  We start with the term $J_{3,j}$. Since the kernel of $P$ has a quadratic decay rate, then, arguing as before, we have
$$
|P\tilde \psi_{j,L}(x)|_{\partial\Omega_M}\le CM^{-2}
$$
and
$$
|(\partial_n u, P\tilde\psi_{j,L})|_{\partial\Omega_M}\le C\|u\|_{H^2}\|P\tilde\psi_{j,L}\|_{L^2(\partial\Omega_M)}\le CM^{-3/2}\|u\|_{H^2}.
$$
We now  use the first estimate in \eqref{4.gradP} from the next Proposition~\ref{Prop4.M},
which along with the previous estimate gives that
$$
|(\partial_n u, P_M\tilde\psi_{j,L})|_{\partial\Omega_M}\le %C\|u\|_{H^2}\|P\tilde\psi_{j,L}\|_{L^2(\partial\Omega_M)}\le
 CM^{-1}\|u\|_{H^2},
$$
and, therefore,
$$
\sum_{j}J_{3,j}^2\le CM^{-2}\#\Xi\|u\|^2_{H^2}\le C_1L^{-2}\|u\|^2_{H^2}.
$$
For estimating the terms $J_{2,j}$ and $J_{1,j}$, we need the following proposition.

%\begin{color}{red}
\begin{proposition}\label{Prop4.M} The following estimate holds
%$$
\begin{equation}\label{4.gradP}
\aligned
&\| P\tilde\psi_{j,L}-P_{\Omega_M}\tilde\psi_{j,L}\|_{H^2(\Omega_M)}\le C M^{-1},\\
&\| P\Nx\tilde\psi_{j,L}-\Nx P_{\Omega_M}\tilde\psi_{j,L}\|_{H^2(\Omega_M)}\le C M^{-2},
\endaligned
\end{equation}
%$$
where $C$ is independent of $N$ and $L$.
\end{proposition}
%\end{color}

\begin{proof} Indeed, since these differences are  harmonic functions,
we only need to estimate the trace of these functions on the boundary
$\partial\Omega$.
Staring with the second and more difficult  case, this difference is
equal to $\Nx Q$, where $Q$ solves
$$
\Dx Q=0,\ \ \partial_nQ\big|_{\partial\Omega_M}=(P\Nx\tilde\psi_{j,L}-
\Nx P_{\Omega_M}\tilde\psi_{j,L})\cdot  n\big|_{\partial\Omega_M}.
$$
 Since the operator $\Nx\circ P$ has a cubically decaying kernel, arguing
 as before, we have the following estimate
$$
\|\Nx P\tilde\psi_{j,L}\|_{H^{3/2}(\partial\Omega_M)}\le CM^{-5/2}.
$$
The second term is a bit more delicate. We use that
$P_{\Omega_M}=-\Nx^\bot(\Dx)_D^{-1}\Rot$, where $(\Dx)_D$ is the
Laplace operator with homogeneous Dirichlet boundary conditions.
To get the desired estimate, we need to estimate the derivatives for
the Green function $G_{\Omega_M}(x,y)$ of this operator, namely, we need that
%$$
\begin{equation}\label{4.ker}
|D_x^\alpha D_y^\beta G_{\Omega_M}(x,y)|\le
C_{\alpha,\beta}\frac1{|x-y|^{|\alpha|+|\beta|}}
\end{equation}
%$$
uniformly with respect to $M$ and with  $|\alpha|+|\beta|\ge1$.
Assume that this estimate holds. Then, for $y\in B_{\xi_j}^L$
and $x\in O_1(\partial\Omega_M)$, we have
$$
|\Nx P_{\Omega_M}\tilde \psi_{j,L}(x)|=\biggl|\int_{\Omega_M}\Nx\Nx^\bot
\Rot_y G_{\Omega_M}(x,y)\tilde \psi_{j,L}(y)\,dy\biggr| \le CM^{-3}
$$
and the same is true for the derivatives. Therefore
$$
\|\Nx P_{\Omega_M}\tilde\psi_{j,L}\|_{H^{3/2}(\partial\Omega_M)}\le CM^{-5/2}.
$$
Assuming \eqref{4.ker} this
 gives the desired estimate by Corollary~\ref{Cor:M1/2}.

 Although  estimate \eqref{4.ker} is standard, for completeness, we briefly indicate a possible way to prove it.
 First of all, we may do scaling and reduce the problem to the fixed domain $\Omega$. Since $\Omega$ is simply connected and smooth there exist a conformal map $w:\Omega\to B^1_0$ which is smooth up to the boundary and, due to conformal invariance of Green functions, we have
 $$
 G_\Omega(x,y)=G_{B^1_0}(w(x),w(y)).
 $$
 {\color{black} see e.g. \cite{Con}.}
 Since $G_{B^1_0}$ can be written explicitly and $|w(x)-w(y)|\sim |x-y|$, the straightforward calculations give the desired estimate \eqref{4.ker}.

 The first estimate in \eqref{4.gradP} follows from \eqref{P-P},
Corollary~\ref{Cor:M1/2} and the estimate
$$
\|P\tilde\psi_{j,L}\|_{H^{3/2}(\partial\Omega_M)}\le CM^{-3/2},
$$
and this completes the proof of  Proposition \ref{Prop4.M}.
\end{proof}
 Let us now estimate $J_{2,j}$. We want to replace $P_{\Omega_M}$ by $P$ in each
 term of it (we denote by $\tilde J_{2,j}$ the modified terms).
 We write
\begin{multline*}
|J_{2,j}^2-\tilde J_{2,j}^2|=\bigl|\left((\bar u_\Xi-\bar u_j)\otimes u
+ u\otimes(\bar u_\Xi-\bar u_j),
\nabla_x P_{\Omega_M}\tilde\psi_{j,L}-\nabla_xP\tilde\psi_{j,L}\right)\cdot\\\cdot
\left((\bar u_\Xi-\bar u_j)\otimes u
+ u\otimes(\bar u_\Xi-\bar u_j),
\nabla_x P_{\Omega_M}\tilde\psi_{j,L}+\nabla_xP\tilde\psi_{j,L}\right)\bigr|
\!\le\! C\|u\|_{L^2}^2M^{-2},
\end{multline*}
where we used
 the estimate from Proposition \ref{Prop4.M},
the uniform (with respect to $j$) boundedness
of the $C$-norm of $\bar u_\Xi-\bar u_j$, and the
fact that $\nabla_xP\tilde\psi_{j,L}$ is uniformly
bounded in $L^2$ (and so is  $\nabla_xP_{\Omega_M}\tilde\psi_{j,L}$ for large $M$
thanks to the estimate from Proposition \ref{Prop4.M}).

Since  the number of vortices $\#\Xi\sim N^2$, we get
 $$
 \sum_j|(J_{2,j}^2-\tilde J_{2,j}^2)|\le C\|u\|^2_{L^2}M^{-2}\#\Xi\le C_1L^{-2}.
 $$
 %Here we also use that the $L^2$-norm of $\bar u_{\Xi}-\bar u_j$ is uniformly bounded.

We now use that the kernel of $\Nx P$ has
cubic decay rate {\color{black}and take into account that $\bar u_i$ has
finite support of size $1$ around $\xi_i$, and
$|\bar u_i(x)|\le C\theta_{\xi_i}(x)^2$.}
Therefore,
setting $A_i:=(|u|^2,\theta_{\xi_i})^{1/2}$ we obtain
%$$
\begin{multline*}
|(u\otimes\bar u_i+\bar u_i\otimes u,\Nx P\tilde \psi_{j,L})|\le
C(|u|,\theta_{\xi_i}^2\theta_{\xi_j})\le
C(|u|^2,\theta_{\xi_i}^2)^{1/2}(\theta_{\xi_i}^2,\theta_{\xi_j}^2)^{1/2}\\\le
C(|u|^2,\theta_{\xi_i})^{1/2}(\theta_{\xi_i}^2,\theta_{\xi_j}^2)^{1/2}
\le C(|u|^2,\theta_{\xi_i})^{1/2}\theta(\xi_i-\xi_j)=CA_i\theta(\xi_i-\xi_j).
\end{multline*}
%$$
Thus,
%$$
\begin{multline}
\sum_{j}\tilde J_{2,j}^2\le C\sum_j\!\(\!\sum_{i\ne j}A_i\theta(\xi_i-\xi_j)\!\)^2\!\!\!
=C\sum_{i,k}\sum_{j\ne \{i,k\}}\!\!A_iA_k\theta(\xi_i-\xi_j)\theta(\xi_k-\xi_j)\le\\
\le C_1\sum_{i,k}\sum_{j\ne \{i,k\}}A_iA_k\theta(\xi_i-\xi_k)
(\theta(\xi_i-\xi_j)+\theta(\xi_k-\xi_j))\le\\
\le CL^{-3}\sum_{i,k}A_iA_k\theta(\xi_i-\xi_k)\le C_1L^{-3}\sum_i A_i^2=
\\=C_1L^{-3}(|u|^2,\sum_i\theta_{\xi_i})\le C_2L^{-3}\|u\|_{L^2}^2,
\end{multline}
%$$
and this gives the desired estimate for $\sum_jJ_{2,j}^2$.

Let us finally consider the terms~$J_{1,j}$. Here, we also want to
replace $P_{\Omega_M}\tilde\psi_{j,L}$ by
$\psi_{j,L}:=P\tilde\psi_{j,L}$, but here we need to act a bit more
accurately since we cannot do this in the term
$\lambda(u,P_{\Omega_M}\tilde\psi_{j,L})$ (if we do this directly,
the error will be of order $M^{-1}$ and this is not enough since
the number of terms in the sum is proportional to $N^2$). However,
we can do this in the terms
$(u,(L_{j,\Omega_M}^*+\lambda)\tilde\psi_{j,L})$ using Proposition \ref{Prop4.M}, since in these terms we have the combination $\Nx
P_{\Omega_M}$, so the error will be of order $L^{-2}$.
\par
After that we replace
$\Cal L_{j,\Omega_M}^*\psi_{j,L}$ by $\Cal L_j^*\psi_{j,L}$,
using the fact that $u\in L^2_\sigma(\Omega_M)$ and, therefore,
$$
(u,v)=(u,Pv)=(u,P_{\Omega_M}v).
$$
We also write
%$$
\begin{multline}
(u,\Cal L_j^*(P\tilde\psi_{j,L}))=(u,(\Cal L_j^*+\lambda)(P\tilde\psi_{j,L}))-\lambda(u,\tilde \psi_{j,L})=\\=(u,(\Cal L_j^*+\lambda)\psi_j)-\lambda(u,\psi_j)-\lambda(u,\tilde\psi_{j,L}-\tilde \psi_j)+
(u,(\Cal L_j^*+\lambda)P(\tilde\psi_{j,L}-P\tilde\psi_j))=\\=-\lambda(u,\tilde\psi_{j,L}-\tilde \psi_j)+
(u,(\Cal L_j^*+\lambda)P(\tilde\psi_{j,L}-P\tilde\psi_j)).
\end{multline}
%$$
Here we  used that $\psi_j=P\tilde\psi_j$ satisfies $\Cal
L_j^*\psi_j=0$. So, we only need to estimate the error terms. We
now recall that according to \eqref{1.cl}, {\color{black} we can freely assume that}
$$
|\tilde\psi_{j,L}(x)-\tilde\psi_j(x)|\le CL^{-2}\theta(x-\xi_j)
$$
and the analogous estimate holds for all derivatives
(note that the left-hand vanishes for $|x|\le\alpha L$). Note that
$$
(u,(\Cal L_j^*+\lambda)v)=(u,\Dx v+2(\bar u_j,\Nx) v+\bar u_j^\bot\Rot v).
$$
In particular, if we put $v=P(\tilde\psi_{j,L}-\tilde\psi_j)$, the
projector $P$ will appear always together with differentiation.
Since $\Nx \circ P$ has cubically decaying kernel, we may write
$$
|(u,(\Cal L_j^*+\lambda)P(\tilde\psi_{j,L}-\tilde\psi_j))|\le
 CL^{-2}(|u|,\theta_{\xi_j})
$$
and
%$$
\begin{multline}
\sum_j (|u|,\theta_{\xi_j})^2=
\sum_{j}\int_{x,y}|u(x)||u(y)|\theta(x-\xi_j)\theta(y-\xi_j)\,dx\,dy
\\\le C\int_{x,y}|u(x)||u(y)|\theta(x-y)\,dx\,dy\le C_1\|u\|^2_{L^2}.
\end{multline}
%$$
This finishes the proof of the desired estimate for $\Pi_{\Xi,L}\Cal L_{\Xi,\Omega}$.

 Thus, the lemma is proved for $\Omega=\Omega_M$. Note that
 the proof essentially used the fact that the number of vortices
 is finite and proportional to $N^2$, but this fact is used for
 replacing $P_{\Omega_M}$ by $P$ only. Since this is not necessary
 for $\Omega=\R^2$, we may take arbitrarily many vortices in this case.
\end{proof}
\begin{corollary}\label{Cor4.small} For sufficiently large $L$ and $N$, we have
%$$
\begin{equation}
\|\Cal L_{12}\|_{\Cal L(L^2,H^2)}+\|\Cal L_{22}\|_{\Cal L(L^2,H^2)}+
\|\Cal L_{21}\|_{\Cal L(H^2,H^2)}\le CL^{-1/2},
\end{equation}
%$$
where the constant $C$ is independent of $L$ and $N$.
\end{corollary}
Indeed, this is an immediate corollary of the previous lemma and
the fact that $\Pi_{\Xi,L}$ is uniformly bounded in $H^2(\Omega_M)$.
\par
The next lemma gives the uniform invertibility of the operator
$\Cal L_{11}=(1-\Pi_{\Xi,L})\Cal L_{\Xi,\Omega_M}(1-\Pi_{\Xi,L})$.
\begin{lemma}\label{Lem4.inv} Let $L$ and $N$ be large enough. Then
the operator $\Cal L_{11}$ is invertible and the following estimate holds:
%$$
\begin{equation}
\|\Cal L_{11}^{-1}\|_{\Cal L(L^2_\sigma, H^2_\sigma)}\le C,
\end{equation}
where the constant $C$ is independent of $L$ and $N$.
\end{lemma}
\begin{proof} Recall that the operator $\Cal L_{\Xi,\Omega_M}+\eb\Pi_{\Xi,L}$
is uniformly invertible if $L$ is large enough. If $\Pi_{\Xi,L}u=0$ or,
which is the same as $(u,\tilde\psi_{j,L})=0$ for all~$j$, then
$$
\Cal L_{\Xi,\Omega_M}u+\eb \Pi_{\Xi,L}u=\Cal L_{\Xi,\Omega_M}u=:g
$$
and, for such functions $g$, the solution $u=u_g$ can be restored by
 $$
 u_g:=(\Cal L_{\Xi,\Omega_M}+\eb\Pi_{\Xi,L})^{-1}g,% \in H_{hyp}=
 %(1-\Pi_{\Xi,L})L^2_\sigma(\Omega_M),
 $$
 which solves the equation
$$
\Cal L_{\Xi,\Omega_M}u_g=g.
$$
In order to describe  the image of operator $\Cal L_{\Xi,\Omega_M}$,
we need to find the conditions which guarantee that $u_g\in H_{hyp}$.
To this end, we multiply equation for $u_g$ by $\tilde\psi_{j,L}$. This gives
%$$
\begin{equation}\label{4.eq}
\eb\alpha_L(u_g, \tilde\psi_{j,L})= (g,\tilde\psi_{j,L})-
(\Cal L_{\Xi,\Omega_M}(\Cal L_{\Xi,\Omega_M}+\eb\Pi_{\Xi,L})^{-1}g,\tilde\psi_{j,L}),
\end{equation}
%$$
for all $j\in\Xi$, and we see that
$$
u_g\in H_{hyp}\quad\Leftrightarrow\quad
(g,\tilde\psi_{j,L})=
(\Cal L_{\Xi,\Omega_M}(\Cal L_{\Xi,\Omega_M}+\eb\Pi_{\Xi,L})^{-1}g,\tilde\psi_{j,L}).
$$

Now let $I$ be the image of operator $\Cal L_{\Xi,\Omega_M}$:
 $$I:=\Cal L_{\Xi,\Omega_M}(H_{hyp}\cap D(\Cal L_{\Xi,\Omega_M})).$$
 Then, according to \eqref{4.eq},
%$$
\begin{equation}\label{4.I}
\!\!I\!=\!\bigg\{g\in L^2_\sigma(\Omega_M)\!:\,
 (g,\tilde\psi_{j,L})\!=\!(\Cal L_{\Xi,\Omega_M}(\Cal L_{\Xi,\Omega_M}+\eb\Pi_{\Xi,L})^{-1}g,\tilde\psi_{j,L}),\, \forall j\bigg\}.
\end{equation}
%$$
In fact, if $g$ satisfies~\eqref{4.I}, then $(u_g,\tilde\psi_{j,L})=0$
for all $j$, therefore
$\Cal L_{\Xi,\Omega_M}u_g=(\Cal L_{\Xi,\Omega_M}+\varepsilon\Pi_{\Xi,L})u_g=g$,
and hence $g\in I$.  Conversely, let $g\in I$. Then, by the definition of $I$,  there exists $v$
such that $(v,\tilde\psi_{j,L})=0$ and
$\Cal L_{\Xi,\Omega_M}v=(\Cal L_{\Xi,\Omega_M}+\eb\Pi_{\Xi,L})v=g$.
Hence, $v=(\Cal L_{\Xi,\Omega_M}+\eb\Pi_{\Xi,L})^{-1}g=u_g$
and~\eqref{4.I} holds.

Thus, the operator $\Cal L_{\Xi,\Omega_M}$ or, which is the same as
$\Cal L_{\Xi,\Omega_M}(1-\Pi_{\Xi,L})$,
 is invertible as the operator from $H_{hyp}\cap D(\Cal L_{\Xi,\Omega_M})\to I$,
  and the inverse is given by $g\to u_g$.

To finish the proof of the lemma, we need to verify that the operator
%$$
\begin{equation}\label{4.II}
(1-\Pi_{\Xi,L}): I\to H_{hyp}
\end{equation}
%$$
is also uniformly invertible. Let us solve the equation $(1-\Pi_{\Xi,L})\hat g=g$,
 where $g\in H_{hyp}$ and $\hat g\in I$. Then, since
 $L^2_\sigma=H_{hyp}\oplus H_{neu}$, the solution has the form
$$
\hat g=g+\sum_j A_j\varphi_{j,L}\in I
$$
for some numbers $A_j$ (in a bounded domain there are only
 finitely many of them, but for $\Omega=\R^2$ this number may
 be infinite, then we need to take $A\in l^2$ or $A\in l^\infty$).
 Using \eqref{4.I}, we write down the system of equations for~$A_j$:
%$$
\begin{equation}\label{4.A}
A_k-\alpha_L\sum_jA_j(\Cal L_{\Xi,\Omega_M}u_{\varphi_{j,L}},\tilde \psi_{k,L})=\alpha_L(\Cal L_{\Xi,\Omega_M}u_g,\tilde \psi_{k,L}),\ \forall k,
\end{equation}
%$$
where
$$
 u_{\varphi_{j,L}}:=(\Cal L_{\Xi,\Omega_M}+\eb\Pi_{\Xi,L})^{-1}\varphi_{j,L},% \in H_{hyp}=
 %(1-\Pi_{\Xi,L})L^2_\sigma(\Omega_M),
 $$
We need to show that the left-hand side is uniformly close to
the identity operator in $l^2$. To this end, we need to estimate
the $l^2$-norm of the corresponding matrix operator
$\mathcal B$, $\{b_j\}_{\xi_j\in\Xi}=\{\sum_kB_{jk}a_k\}_{\xi_j\in\Xi}$. We set
$$
B:=B_1+B_2,\quad B_1:=\sup_{j}\sum_{k}B_{jk},\ B_2:=\sup_k\sum_jB_{j,k},
$$
where $B_{jk}:=|(\Cal L_{\Xi,\Omega_M}u_{\varphi_{j,L}},\tilde\psi_{k,L})|$,
$\xi_j,\xi_k\in\Xi$. Then we clearly have
$$
\|\mathcal B\|_{l^1\to l^1}\le B_1,\quad \|\mathcal B\|_{l^\infty\to l^\infty}\le B_2,
$$
so that H\"older's inequality (or complex interpolation, see e.g., \cite{Trib})
gives that
$$
\|\mathcal B\|_{l^2\to l^2}\le\sqrt{B_1B_2}.
$$

We first consider the terms $B_{jk}$ with $k\ne j$. Integrating by parts and using that $P_{\Omega_M}u_{\varphi_{j,L}}=u_{\varphi_{j,L}}$, we have
%$$
\begin{multline}\label{4.Bjk}
B_{j,k}=(u_{\varphi_{j,L}}, \hat L_{\Xi}^*(P_{\Omega_M}\tilde\psi_{k,L}))+\\
+(\partial_nu_{\varphi_{j,L}},P_{\Omega_M}\tilde\psi_{k,L})_{\partial\Omega_M}-\lambda(u_{\varphi_{j,L}},\tilde \psi_{k,L}),
\end{multline}
%$$
where
$$
\hat L_{\Xi}^*v:=\Delta v+2(\bar u_\Xi,\Nx)v+\bar u_\Xi^\bot\Rot v.
$$
 To estimate the quantity $B$, we need one more proposition.

\begin{proposition}\label{Prop4.weight} Let $L$ and $N$ be large enough. Then $u_{\varphi_{j,L}}=\tilde u_{\varphi_{j,L}}+\hat u_{\varphi_{j,L}}$ where
%$$
\begin{equation}\label{4.u}
\|\tilde u_{\varphi_{j,L}}\|_{H^2(B^1_{x_0})}\le C\theta(\xi_j-x_0)^{5/6},\ \
\|\hat u_{\varphi_{j,L}}\|{\color{black}_{H^2(\Omega_M)}}\le CM^{-2}
\end{equation}
%$$
and the constant $C$ is independent of $x_0$, $j$, $L$ and $M$.
\end{proposition}
\begin{proof}[Proof of the proposition] We set
$\tilde u_{\varphi_{j,L}}:=(\Cal L_\Xi+\eb\Pi_{\Xi,L})^{-1}\varphi_{j,L}$.
Then, due to Theorem \ref{Th2.main} (and estimate in the $L^2A^\infty$-spaces),
this function satisfies estimate \eqref{4.u}. The remainder $\hat u_{\varphi_{j,L}}$
obviously solves
%$$
\begin{equation}\label{4.auu}
\Cal L_{\Xi,\Omega_M}\hat u_{\varphi_{j,L}}+\eb\Pi_{\Xi,L}\hat u_{\varphi_{j,L}}=0,\ \ \hat u_{\varphi_{j,L}}\big|_{\partial\Omega_M}=\tilde u_{\varphi_{j,L}}\big|_{\partial\Omega_M}.
\end{equation}
%$$
{\color{black} Here we implicitly used that
$$
(\Cal L_{\Xi,\Omega_M}-\Cal L_{\Xi})\tilde u=
(P_{\Omega_M}-P)((\Delta-\lambda)\tilde u-
\divv(\bar u_\Xi\otimes\tilde u+\tilde u\otimes\bar u_\Xi))=\nabla Q
$$
and, therefore, $P_{\Omega_M}(\Cal L_{\Xi,\Omega_M}-\Cal L_\Xi)=0$.}
To proceed further, we need to prove that
%$$
\begin{equation}\label{4.nice}
\|\tilde u_{\varphi_{j,L}}\|_{H^{3/2}(\partial\Omega_M)}\le CM^{-5/2}.
\end{equation}
%$$
Indeed, if this estimate is obtained,  we can construct the extension from the boundary, which satisfies
$$
\|\tilde u_{ext}\|_{H^2}\le CM^{-2}.
$$
After that, we use Theorem \ref{Th3.main} and get the desired estimate for $\hat u_{\varphi_{j,L}}$. Thus, we only need to verify \eqref{4.nice}. To this end, we write
{\color{black}
%$$
\begin{multline}\label{4.wrong}
\tilde u_{\varphi_{j,L}}\!=\!P\divv(\Dx-\lambda)^{-1}(\bar u_\Xi\otimes \tilde u_{\varphi_{j,L}}+\tilde u_{\varphi_{j,L}}\otimes\bar u_\Xi)-\\-(\Dx-\lambda)^{-1}\Pi_{\Xi,L}\tilde u_{\varphi_{j,L}}+(\Dx-\lambda)^{-1}\varphi_{j,L}:=I_1+I_2+I_3.
\end{multline}
%$$
}
We note that, according to the construction of $\Xi$,
there exist $0<\kappa<\kappa'$ such that $|\xi_i|<\kappa M$
 for all $i$ and $|x|>\kappa'M$ for all $x\in\Cal O_1(\partial\Omega_M)$.
 Thus, for sufficiently large $N$, we have $(\Pi_{\Xi,L}\tilde u_{\varphi_{j,L}})(x)=0$
 for $|x|\ge \kappa''M$ with $\kappa<\kappa''<\kappa'$. Let
$$
w(x):=\max\{1,e^{\alpha(|x|-\kappa''M)}\}.
$$
Then, $w$ is a weight function of exponential growth rate $\alpha$ and,
 for $\alpha>0$ small enough, we have the $L^1$-weighted regularity
 estimate for the operator $(\Dx-\lambda)$, namely
%$$
\begin{multline*}
\|I_2\|_{L^1_w}\le C\|\Pi_{\Xi,L}\tilde u_{\varphi_{j,L}}\|_{L^1}
\\\le C_1\int_{x,y}
\sum_k\theta^{5/6}(\xi_j-y)\theta(\xi_k-y)^{5/6}\theta(\xi_k-x)^{5/6}\,dx\,dy
\\\le C_2\int_{x,y}\theta(\xi_j-y)^{5/6}\theta(x-y)^{5/6}\,dx\,dy\le C_3,
\end{multline*}
%$$
where the constants $C_i$ are independent of $j$ and $M$, see e.g., \cite{EfZ}.
Here we have also implicitly used the proved estimate \eqref{4.u} for
$\tilde u_{\varphi_{j,L}}$ Therefore, {\color{black} differentiating \eqref{4.wrong} several times and using the Sobolev embedding theorem, we get}
$$
|I_2(x)|\le Ce^{-\alpha(\kappa'-\kappa'')M},\ \ x\in\Cal O_1(\partial\Omega_M)
$$
and the same is true for all derivatives of $I_2$. This shows that $I_2(x)$ is exponentially small near the boundary with all its derivatives and, in particular, \eqref{4.nice} holds for this part of $\tilde u_{\varphi_{j,L}}$. {\color{black} The term $I_3$ can be estimated analogously, so we only need to estimate $I_1$.
\par
We write $I_1:=P\div I_4$.} Arguing as before, we see that
$$
\|I_{\color{black}4}\|_{L^1_w}\le C\|\bar u_\Xi\otimes u_{\varphi_{j,L}}+u_{\varphi_{j,L}}\otimes\bar u_\Xi\|_{L^1}\le C_1
$$
and the same is true for the derivatives. We recall that $P\div=N_1+N_2$,
where $N_1$ is a ``local" operator which is bounded from $H^{1}_w$ to $L^2_w$,
 so the term $N_1 I_{\color{black}4}$ is exponentially small near the
 boundary together with all its derivatives and we only need to consider
 the non-local part $N_2I_{\color{black}4}$, where the kernel of $N_2$ is
 smooth and satisfies $|K_{N_2}(z)|\le C\theta(z)$. Let $\delta\in(\kappa'',\kappa')$. Then
%$$
\begin{multline*}
|N_2I_{\color{black}4}(x)|\le C\int_y I_{\color{black}4}(y)\theta(x-y)\,dy=\int_{|y|<\delta M}I_{\color{black}4}(y)\theta(x-y)\,dy+\\+\int_{|y|>\delta M} I_{\color{black}4}(y)\theta(x-y)\,dy:=J_1+J_2.
\end{multline*}
%$$
Since ${\color{black}I_4}$ is uniformly bounded in $L^1_w$, the term $J_2$ is exponentially small with respect to $M$, so we only need to look at $J_1$. If $x\in\Cal O_1(\partial\Omega_M)$, we have
$$
|J_1(x)|\le C(\kappa'-\delta)^{-3}M^{-3}\|I_{\color{black}4}\|_{L^1}\le C_2M^{-3}
$$
and the same estimate holds for its derivatives.
Thus, $J_1(x)$ is  of order $M^{-3}$ near the boundary.
 Thus, estimate \eqref{4.nice} is proved and the proposition is also proved.
\end{proof}

We now return to the proof of the lemma. We start with the
boundary term in \eqref{4.Bjk}. Indeed, due to \eqref{4.nice} and \eqref{4.u},
{\color{black}and the fact that in view of \eqref{4.gradP}
%$$
\begin{multline}
\|P_{\Omega_M}\tilde\psi_{j,L}\|_{H^1(\Omega_M)}^2=\|P_{\Omega_M}\psi_{j,L}\|^2_{L^2}+\|\Nx P_{\Omega_M}\psi_{j,L}\|^2_{L^2}\le\\\le \|\psi_{j,L}\|^2_{L^2}+\|\Nx \psi_{j,L}\|^2_{L^2}+CM^{-2}\le C
\end{multline}
%$$
we obtain, using also \eqref{4.u} and \eqref{4.nice}, that
%$$
\begin{multline}
|(\partial_nu_{\varphi_{j,L}},P_{\Omega_M}\tilde\psi_{j,L})_{\partial\Omega_M}|\le\\\le C(\|\hat u_{\varphi_{j,L}}\|_{H^2(\Omega_M)}+\|\tilde u_{\varphi_{j,L}}\|_{H^1(\partial\Omega_M)})\|P_{\Omega_M}\tilde\psi_{j,L}\|_{H^1(\Omega_M)}\le CM^{-2},
\end{multline}}
%$$
and the impact of this term to the quantity $B$ is of order $N^2M^{-2}=L^{-2}$.
\par
Let us estimate the other two terms in \eqref{4.Bjk}. In the case, $j\ne k$,
we replace $u_{\varphi_{j,L}}$ and $P_{\Omega_M}\tilde\psi_{k,L}$ by
$\tilde u_{\varphi_{j,L}}$ and $P\tilde \psi_{k,L}$, respectively.
Then, using Propositions \ref{Prop4.M} and \ref{Prop4.weight},
we see that the error of such approximation will be of order $M^{-2}$
and the impact of this error to the quantity $B$ is of order $L^{-2}$.
Thus, we need to estimate the terms
$(\tilde u_{\varphi_{j,L}},\hat L_\Xi^*(P\tilde\psi_{k,L}))$.
Using \eqref{4.u} and the fact that the kernel of $\Nx\circ P$
decays cubically, we arrive at
$$
|((\tilde u_{\varphi_{j,L}},\hat L_\Xi^*(P\tilde\psi_{k,L}))|\le
 C(\theta(x-\xi_j)^{5/6},\theta(x-\xi_k)^{5/6})\le C_1\theta(\xi_j-\xi_k)^{5/6}
$$
and taking the sum with respect to $j\ne k$ (or $k\ne j$),
we see that the impact of these terms to the quantity $B$ is of order $L^{-5/2}$.
The analogous estimate for the third term in \eqref{4.Bjk} is immediate, so we
see that the impact of $B_{jk}$ with $j\ne k$ is of order $L^{-2}$. Therefore,
it only remains to consider the case $j=k$. Using that
$\bar u_\Xi=\sum_i \bar u_i$, we write
%$$
\begin{multline}
B_{jj}=(\tilde u_{\varphi_{j,L}},\Cal L_j^*(P\tilde \psi_{j,L}))+
 O(M^{-2})+\\+\sum_{i\ne j}\((\tilde u_{\varphi_{j,L}},2(\bar u_i,\Nx)
 (P\tilde\psi_{j,L})) +
 (\tilde u_{\varphi_{j,L}},\bar u_i^\bot\Rot(P\tilde\psi_{j,L}))\).
\end{multline}
%$$
Using the first estimate in \eqref{4.u} and that the support of $\bar u_i$ is
localized near $\xi_i$, we
see that the second term on the right is bounded by
$$
%|\sum_{i\ne j}\big(\cdots\big)|\!\le\!
 C\sum_{i\ne j}(\theta(x-\xi_i)^{5/6},\theta(x-\xi_j)^{5/6})\le C_1\sum_{i\ne j}\theta(\xi_i-\xi_j)^{5/6}\le C_2 L^{-5/2}.
$$
Finally, using estimate \eqref{1.app} for the first term, we see that
the impact of terms $B_{jj}$ to $B$ is of order $L^{-1/2}$. Combining
together the obtained estimates, we see that
$$
B\le CL^{-1/2}.
$$
Since $B$ majorates the norm of the matrix operator $(B_{jk})_{\xi_j,\xi_k\in\Xi}$ in $l^2$, we see that the left-hand side of \eqref{4.A}, we see that the operator in the left-hand side is indeed is uniformly close to identity and, by this reason, is invertible if $L$ is large enough.
  We also mention that arguing analogously, see \eqref{est} and below, we get
$$
\sum_{k}(\Cal L_{\Xi,\Omega_M}u_{g},\tilde\psi_{k,L})^2\le CL^{-1}\|u_g\|^2_{H^2}\le C_1L^{-1}\|g\|_{L^2}^2
$$
and, therefore, \eqref{4.A} is uniquely solvable and
$$
\sum_k A_k^2\le C\|g\|^2_{L^2}.
$$
This gives us the uniform invertibility of $(1-\Pi_{\Xi,L}): I\to H_{hyp}$
 and   proves the lemma for the case of $\Omega=\Omega_M$.
 Note that in the proof we essentially used the fact that $\#\Xi\sim c_1N^2$.
 However, this was used to replace the projector $P_{\Omega_M}$ by $P$ only.
 Since in the case $\Omega=\R^2$, we do not have this projector at all,
 the proof remains valid for any number of vortices.
\end{proof}

We are now ready to finish the proof of the theorem.
To this end, we write down the equations for the operators
 $K_{neu}$ and $K_{hyp}$ using the invariance condition.
 Namely, $K_{neu}$ should satisfy
%$$
\begin{equation*}
\(\begin{matrix} \Cal L_{11}&\Cal L_{1,2}\\\Cal L_{2,1}&\Cal L_{22}\end{matrix}\)\(\begin{matrix}K_{neu}\\1\end{matrix}\)=\(\begin{matrix}\Cal L_{11}K_{neu}+\Cal L_{12}\\\Cal L_{21}K_{neu}+\Cal L_{22}\end{matrix}\)=\(\begin{matrix}K_{neu}(\Cal L_{21}K_{neu}+\Cal L_{22})\\\Cal L_{21}K_{neu}+\Cal L_{22}\end{matrix}\),
\end{equation*}
%$$
which gives us the desired Riccatti type operator equation for $K_{neu}$:
%$$
\begin{equation}
K_{neu}=\Cal L_{11}^{-1}K_{neu}\Cal L_{21}K_{neu}+\Cal L_{11}^{-1}K_{neu}\Cal L_{22}
{\color{black}-\Cal L_{11}^{-1}\Cal L_{12}}.
\end{equation}
%$$
Using {\color{black} Corollary \ref{Cor4.small} and Lemma~\ref{Lem4.inv}}, we see that the right-hand side of this equation is uniformly small if $K_{neu}$ is uniformly bounded, so we can find the uniformly bounded solution of this equation, say in $\Cal L(L^2,H^2)$ via the Banach contraction theorem. Moreover, the restriction of the operator $\Cal L_\Xi$ (or $\Cal L_{\Xi,\Omega_M}$) to the base $H_{hyp}$ is given by $\Cal L_{21}K_{neu}+\Cal L_{22}$ and we see that it is of order $O(L^{-1/2})$. This gives the second spectral condition \eqref{4.spec}.
\par
Let us construct now the operator $K_{hyp}$. Arguing analogously,
we derive the Riccatti equation for it
$$
K_{hyp}=\Cal L_{22}K_{hyp}\Cal L_{11}^{-1}-
K_{hyp}\Cal L_{12}K_{hyp}\Cal L_{11}^{-1}+\Cal L_{21}\Cal L_{11}^{-1},
$$
and the Banach contraction theorem gives us the solution of this
equation in $\Cal L(L^2,H^2)$. Also, the projection of $\Cal L_\Xi$
(or $\Cal L_{\Xi,\Omega_M}$) to the base $H_{hyp}$ is given by
$\Cal L_{11}+\Cal L_{12}K_{hyp}$, which is a small perturbation
of an invertible operator. This gives us the first condition of
\eqref{4.spec} and finishes the proof of the theorem.
\end{proof}
\begin{comment}
{\color{red}
Concluding this section we estimate of the $L^2$-norm of the
forcing term $f_\Xi$ corresponding to the multi-vortex solution
$\bar u_\Xi$, see \eqref{1.NS-st}, \eqref{2.m} and \eqref{2.ns}.
We first observe that if $f_0$ had compact support,
then we would immediately have that
$$
\|f_\Xi\|_{L^2}=\|f_0\|_{L^2}\sqrt{\#\Xi}\sim N.%\|f_0\|_{L^2}.
$$
However, $f_0$ is not compactly supported but rather decays like
$1/|x|^3$, namely, $f_0(x)\le C\theta(x)$ and in the case $\mathbb R^2$ we obtain
$$
\|f_\Xi\|_{L^2}^2\le\|f_0\|_{L^2}^2{\#\Xi}+
C\int\sum_{i\ne j}\theta(x-\xi_i)\theta(x-\xi_j)dx\le
C\#\Xi(1+L^{-3}).
$$
The case of $\Omega_M$ reduces to $\mathbb R^2$ if we take into account that
arguing exactly as in~\eqref{4.Q} we have
$$
\|(P-P_{\Omega_M})
(-\Dx \bar u_0+P\divv(\bar u_0\otimes\bar u_0))\|_{H^2(\Omega_M)}
\le CM^{-2}.
$$

Finally, since $\Rot f_0$ decays at least like $1/|x|^3$, the same
argument gives that
$$
\|\Rot f_\Xi\|_{L^2(\mathbb R^2)}^2\le
C\#\Xi.
$$
Furthermore, taking $L$ sufficiently large we can assume that
\begin{equation}\label{6.eq}
\frac12 \|\Rot f_0\|_{L^2(\mathbb R^2)}^2\#\Xi\le\|\Rot f_\Xi\|_{L^2(\mathbb R^2)}^2\le
\|\Rot f_0\|_{L^2(\mathbb R^2)}^2\#\Xi.
\end{equation}

}
\end{comment}
\section{Main results}\label{s5}
In this section, we derive  lower bounds for the attractors
dimension in two cases: \begin{enumerate}
                          \item Classical Navier--Stokes equation in the
2D bounded smooth simply connected domain $\Omega$;
                          \item Navier--Stokes problem with linear Ekman damping in the whole plane.
                        \end{enumerate}

 We start with the
first case:
%$$
\begin{equation}\label{5.NS}
\Dt u+\divv(u\otimes u)+\Nx p=\nu\Dx u+f,\ \
\divv u=0,\ u\big|_{t=0}=u_0,\ u\big|_{\partial\Omega}=0,
\end{equation}
%$$
where $\nu>0$ is a small parameter and $f\in L^2$ is a given external force.
It is well-known that this equation generates a dissipative semigroup in
the space $\Phi=L^2_\sigma(\Omega)$ via
$$
S(t)u_0:=u(t),\ \  S(t):\Phi\to\Phi.
$$
Moreover, this semigroup possesses a global attractor $\Cal A$ of
 finite fractal dimension, see \cite{BV,Tem}, For the convenience of the reader,
 we recall that the set $\Cal A$ is a global attractor for the semigroup
 $S(t)$ in $\Phi$ if
\par
a) $\Cal A$ is a compact and strictly invariant (i.e. $S(t)\Cal A=\Cal A$)
subset of the phase space $\Phi$;
\par
b) $\Cal A$ attracts the images of all bounded sets of $\Phi$ as time tends
to infinity, i.e. for every bounded set $B\subset\Phi$ and every
neighbourhood $\Cal O(\Cal A)$ of $\Cal A$ there exists time
$T=T(\Cal B,\Cal O)$ such that
$$
S(t)B\subset\Cal O(\Cal A),\ \ \forall t\ge T.
$$
We also recall that by the Hausdorff criterion, for any $\eb>0$, the compact
set $\Cal A$ can be covered by finitely many $\eb$-balls. Let
$N_\varepsilon(\Cal A,\Phi)$
be the minimal number of such balls, then the fractal (box-counting)
dimension of $\Cal A$ is defined as follows:
$$
\dim_f(\Cal A)=\dim_f(\Cal A,\Phi):=
\limsup_{\eb\to0}\frac{\log_2 N_\varepsilon(\Cal A,\Phi)}{\log_2\frac1\eb},
$$
see \cite{Tem,BV,Rob} for more details. As mentioned in the introduction,
the bounds for the fractal dimension of the attractor $\Cal A$ are
usually expressed in terms of the dimensionless quantity, called the
Grashof number $$G:=\frac{|\Omega|\|f\|_{L^2(\Omega)}}{\nu^2}$$ and
the best known upper bound is (see e.g. \cite{Tem})
%$$
\begin{equation}\label{5.bad}
\dim_f(\Cal A,\Phi)\le \color{blue}\tilde cG_{-1}\le c G\color{black},
\end{equation}
%$$
see also \cite{IKZ} for the explicit values of the absolute constant $c$.
Moreover, in the case of periodic boundary conditions this estimate can be
essentially improved \cite{CFT, Tem}:
%$$
\begin{equation}\label{5.good}
\dim_f(\Cal A,\Phi)\le c_{per}G^{2/3}\ln(1+G)^{1/3},
\end{equation}
%$$
see \cite{IZ25} for the bound for the constant $c_{per}$. The following theorem,
which gives the lower bounds of this quantity in the case of a bounded domain
is one of the main results of the paper.
\begin{theorem}\label{Th5.main1} Let $\Omega$ be a smooth simply connected
bounded domain in $\R^2$. Then, for every sufficiently small $\nu>0$,
there exists $f=f_\nu\in L^2_\sigma(\Omega)$ such that
%$$
\begin{equation}\label{5.lob}
\dim_f(\Cal A,\Phi)\ge \bar c G^{2/3},
\end{equation}
%$$
for some absolute constant $\bar c$. Moreover, $\|f_\nu\|_{L^2}\sim\nu^{1/2}$
and $G\sim\nu^{-3/2}$.
\end{theorem}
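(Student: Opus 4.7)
The plan is to use the multi-vortex machinery developed in Sections~\ref{s1}--\ref{s4} to exhibit a stationary Navier--Stokes solution whose unstable manifold has dimension comparable with the number of vortices, and then to invoke the classical fact that the unstable manifold of an equilibrium of a dissipative semigroup sits inside the global attractor; the bound $\dim_f(\Cal A)\ge \bar c G^{2/3}$ will then follow by a scaling exercise. Specifically, I first reduce to the case $\nu=1$. If $\bar u(x)$ solves the stationary Navier--Stokes in $\Omega_M := M\Omega$ with $\nu=1$ and force $f_\Xi$, then $v(y) := \nu M\,\bar u(My)$ is a stationary solution in $\Omega$ with viscosity $\nu$ and force $f_\nu(y) := \nu^2 M^3 f_\Xi(My)$. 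A direct change of variables yields
$$
\|f_\nu\|_{L^2(\Omega)}=\nu^2 M^2\|f_\Xi\|_{L^2(\Omega_M)},\qquad G=\frac{|\Omega|\|f_\nu\|_{L^2}}{\nu^2}=|\Omega|\,M^2\,\|f_\Xi\|_{L^2(\Omega_M)}.
$$
This change of variables is a conjugacy of the two time-dependent flows, so it preserves attractor fractal dimensions together with the spectrum of any linearization at stationary solutions, and it suffices to establish the lower bound in $\Omega_M$ with $\nu=1$.

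Next I fix $L$ large enough that Theorems~\ref{Th3.main} and~\ref{Th4.main} apply, pick an integer $N\gg 1$, set $M=cNL$, and choose vortex centres $\Xi\subset L\Bbb Z^2$ satisfying~\eqref{3.bd}, so that $\#\Xi\sim c_1 N^2$. Assuming, as permitted in Section~\ref{s1}, that $\bar u_0$ and $f_0$ have compact support in $B^1_0$, the multi-vortex $\bar u_\Xi=\sum_j \bar u_0(\cdot-\xi_j)$ is a smooth divergence-free field, compactly supported strictly inside $\Omega_M$, hence satisfies the Dirichlet condition automatically and is a genuine stationary solution of Navier--Stokes in $\Omega_M$ with force $f_\Xi=\sum_j f_0(\cdot-\xi_j)$; since the translates have disjoint supports, $\|f_\Xi\|_{L^2(\Omega_M)}^2=\#\Xi\,\|f_0\|_{L^2}^2\sim N^2$. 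Now Theorem~\ref{Th4.main} applied to $\Cal L_{\Xi,\Omega_M}$ furnishes an invariant splitting $L^2_\sigma(\Omega_M)=M_{hyp}\oplus M_{neu}$ with $\dim M_{neu}=\#\Xi$ and $\sigma(\Cal L_{\Xi,\Omega_M}|_{M_{neu}})\subset B^{CL^{-1/2}}_0$. Undoing the shift by $\lambda$ built into the definition of $\Cal L_\Xi$, the actual linearization of the NS semigroup at $\bar u_\Xi$ has $\#\Xi$ eigenvalues clustered in $B^{CL^{-1/2}}_\lambda$, and for $L$ sufficiently large all of them lie in the half-plane $\{\Ree z\ge \tfrac12\lambda\}$. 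Hence $\bar u_\Xi$ admits an unstable manifold $W^u(\bar u_\Xi)$ of dimension at least $\#\Xi$.

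Since any equilibrium of a dissipative semigroup belongs to the global attractor together with its full unstable manifold (the backward trajectory converging to the equilibrium is bounded and therefore contained in $\Cal A$, see \cite{BV,Tem}), we obtain $\dim_f(\Cal A,\Phi)\ge \dim W^u(\bar u_\Xi)\ge c_1 N^2$. Substituting this into the scaling relation gives $G\sim M^2\cdot N=c^2L^2 N^3$, and with $L$ fixed one concludes $\dim_f(\Cal A,\Phi)\ge \bar c\,G^{2/3}$ with an absolute constant $\bar c$. Choosing $N\sim \nu^{-1/2}$ finally recovers the normalizations $\|f_\nu\|_{L^2}\sim\nu^{1/2}$ and $G\sim\nu^{-3/2}$ claimed in the statement. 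The heavy lifting is already packaged in Theorem~\ref{Th4.main}; the only potentially delicate point---that the $O(L^{-1/2})$ perturbation of the neutral eigenvalues does not push any of them across the imaginary axis---is immediate once we notice that they cluster around the strictly positive $\lambda$ rather than around $0$.
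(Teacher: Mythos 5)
Your proposal is correct and follows essentially the same route as the paper's proof: rescale to $\Omega_M=\nu^{-1/2}\Omega$ with unit viscosity, take the multi-vortex equilibrium with $\#\Xi\sim N^2$, apply Theorem \ref{Th4.main} (shifted back by $\lambda$) to get instability index at least $\#\Xi$, and conclude via the unstable manifold contained in $\Cal A$ together with the scale invariance of the Grashof number, $G\sim N^3$. Your reliance on the compact-support simplification for $\bar u_0,f_0$ is the same convention the paper adopts (the paper only adds a one-line remark reducing the general case to it by writing $f_i=P_{\Omega_M}\hat f_i$ with $\hat f_i$ localized and using that $P_{\Omega_M}$ is an $L^2$-contraction).
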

\begin{proof} First, we do the standard scaling
%$u(t,x)\rightarrow \nu u(t,x\nu^{1/2})$,

$$
{\color{black}
x=\nu^{1/2}x',\quad \tilde u(t,x')=\nu^{-1/2}u(t,x),\quad
\tilde p(t,x')=\nu^{-1}p(t,x),
}
$$
which transforms our equation~\eqref{5.NS} to the equation
for $\tilde u$ and $\tilde p$ in a big domain $\Omega_M=M\Omega$,
$M=\nu^{-1/2}$ but with new $\nu=1$
and $\tilde f(x')=\nu^{-1/2}f(x)$. After that, we take a
Vishik vortex $\bar u_0(x)$ which satisfies the assumptions
of section \ref{s1}, fix a sufficiently large $L$ and take $N=M/L$
in such a way that the assumptions of Theorem \ref{Th4.main} are
satisfied for the multi-vortex $\bar u_\Xi$ with $\#\Xi\sim N^2$.
Finally, we fix the external force $f_\Xi(x')$ produced by the
multi-vortex~$\bar u_\Xi(x')$.
\par
By the construction, the multi-vortex $\bar u_\xi$ will be an
equilibrium for the scaled Navier--Stokes equation \eqref{5.NS}
with $\nu=1$ and, due to Theorem \ref{Th4.main}, the instability
index of this equilibrium is not less than $N^2$.  Thus, since the
attractor always contains the unstable manifolds of any
equilibrium, the dimension of the attractor is not less than $N^2$.
Since the Grashof number is invariant with respect to this scaling,
we have
$$
\dim_f(\Cal A,\Phi)\ge N^2,\ \ |\Omega_M|\sim M^2\sim N^2,\ \
\|f_\Xi\|_{L^2(\Omega_M)}\sim N,\ \ G\sim N^{3}
$$
and, therefore, $\dim_f(\Cal A,\Phi)\ge \bar c G^{2/3}$. {\color{black} Indeed, since the supports of $f_i$ are localized,
$$
\|f_\Xi\|_{L^2}^2=\sum_{\xi_i\in\Xi}\|f_i\|^2_{L^2}=\#\Xi\|f_0\|^2_{L^2}\sim N^2.
$$
In a general case, $f_i$ are not localized, we have $f_i=P_{\Omega}\hat f_i$, where $\hat f_i$ are localized. Since the norm of $P_{\Omega_M}$ in $L^2$ is one, we have the same estimate.}
\par
Returning to the original domain $\Omega$ we have
%$$
\begin{equation}\label{oscillate}
f_\nu(x)=\nu^{1/2}f_{\Xi(\nu)}\left(\frac x{\nu^{1/2}}\right),
\end{equation}
%$$
which completes the proof of the theorem.
\end{proof}
 We now discuss the lower bounds through the quantity $G_{-1}$.
\begin{corollary}\label{Cor7.m1} Let us consider exactly the same construction of the function $f_\nu$ as in the previous theorem. Then
$$
\dim_f(\Omega)\ge \tilde cG_{-1}
$$
for some absolute constant $\tilde c>0$.
\end{corollary}
\begin{proof} Indeed, by construction, we have $f_{\Xi}=\sum_i f_i$, where $f_i=P_{\Omega}\tilde f_i$ and $\tilde f_i=\divv F_i$ with fully localized tensors $F_i$. We then write
$$
f_\Xi=\Nx p+\divv(\sum_i F_i)=\Nx p+\tilde f_\Xi
$$
with fully localized $F_i$. Thus,
$$
\|\tilde f_\Xi\|_{\dot H^{-1}(\Omega)}\le \|\sum_i F_i\|_{L^2}\sim N^{1/2}.
$$
This, in turn, gives $G_{-1}\sim N\sim \nu^{-1}$ and finishes the proof of the corollary.
\end{proof}
\begin{remark}\label{Rem7.GG}
The lower bound \eqref{5.lob} was known before for
the case of periodic boundary conditions,
see \cite{Liu}. It also holds for the sphere $\mathbb S^2$ as
the numerical analysis of the corresponding spectral problem suggests,
see \cite{I-Kolm2004}. Thus, together with \eqref{5.good}
(which holds in both cases),
 this estimate gives almost
sharp upper and lower bounds for the attractor's dimension  in terms of the Grashov number $G$.  Recall
that the upper bounds utilize good estimates for the vorticity
equation and the lower bounds use the Kolmogorov flows, non of
which works for the case of bounded domains and Dirichlet boundary
conditions. By this reason,   upper
bounds \eqref{5.bad} in terms of $G$ are essentially worse (since we are unable to utilize the vorticity equation as well as $L^2$-regularity of $f$). Also, no lower bounds at all were known before.
\par
As we see from Theorem \ref{Th5.main1} and Corollary \ref{Cor7.m1}, our approach gives
the lower bounds in terms of $G$ for Dirichlet boundary conditions of the
same type as for the periodic ones, but the upper and lower bounds in these case are essentially different and the estimate is not sharp
% This rises up an important problem,
%namely, which of two estimates \eqref{5.bad} or \eqref{5.lob} is actually sharp?
%We return to this open problem somewhere else.
\par
In contrast to this, if we use a bit more natural dimensionless quantity $G_{-1}$ the upper and lower bounds become sharp and this resolves the long-standing open problem of finding sharp bounds for the attractor dimension for the case of Dirichlet boundary conditions. The surprising improvement given by passing from $G$ to $G_{-1}$ can be explained as follows: as we already mentioned in the introduction, the upper bound \eqref{5.bad} is actually obtained for $G_{-1}$ in the right-hand side and then $G_{-1}$ is replaced by $G$ using the Poincare inequality
%$$
\begin{equation}\label{7.Puan}
\|f\|_{\dot H^{-1}}\le C_{\Omega}\|f\|_{L^2},\quad C_\Omega\le
\left(\frac{|\Omega|}{2\pi}\right)^{1/2}.
\end{equation}
%$$
However, our lower bounds utilize highly oscillating functions of the form \eqref{oscillate} and for such functions we have much better relation
$$
\|f_\nu\|_{\dot H^{-1}}\sim C_{\Omega}\nu^{1/2}\|f_\nu\|_{L^2}
$$
and, by this reason, the passage from $G_{-1}$ to $G$ via the Poincare inequality may loose sharpness. However, it is still unclear whether or not we may find an alternative way to write better (than via Poincare) estimate for the dimension of the attractor in terms of $G$ for Dirichlet BC (e.g., as it done for the case of periodic BC).
\par
Note that our method gives an alternative to the Kolmogorov flows
for periodic boundary conditions as well. Indeed, starting from
the Vishik vortex $\bar u_0$, consider the periodic set $\Xi$,
which will give us a periodic profile $\bar u_\Xi$, which, in turn,
will be the equilibrium replacing the Kolmogorov flows
(there is a minor technical thing that we need
$\bar u_\Xi$ to have zero mean, but this is easily
solvable considering for instance a vortex-antivortex pair).
After that, we consider only space periodic perturbations from
the space $L^2_{\sigma,per}(\R^2)\subset L^2_{b,\sigma}(\R^2)$
and use Theorem \ref{Th4.main}. This approach is probably more
technical, especially if we take into account the construction
of the Vishik vortex, but looks more transparent than the
Kolmogorov flows at least on the level of the ideas.
\end{remark}
We now turn to our second main result, namely, the
Navier--Stokes system with linear Ekman damping  on the whole plane $\Omega=\R^2$:
%$$
\begin{equation}\label{5.ENS}
\Dt u+\divv(u\otimes u)+\Nx p+\mu u=\nu\Dx u+f,\ \ \divv u=0,\ \ u\big|_{t=0}=u_0,
\end{equation}
%$$
where $\nu,\mu>0$ are small parameters and $f\in H^1(\R^2)$.
It is known, see \cite{IPZ}, that this problem generates a
dissipative semigroup in the phase space $\Phi:=L^2_\sigma(\R^2)$,
which possesses a global attractor $\Cal A$ of finite fractal dimension.
Moreover, the analogue of the Grashof number here is the following
dimensionless quantity
$$
G_1:=\frac{\|\Rot f\|_{L^2}^2}{\mu^3\nu}
$$
and
the upper bounds obtained in \cite{IPZ} read
%$$
\begin{equation}\label{5.usharp}
\dim(\Cal A,\Phi)\le c_1 G_1,
\end{equation}
%$$
for some absolute constant $c_1$. Our second main result
gives the sharp lower bound for the above upper bound.
%\begin{verbatim}
%%%%%%%%%%%%%%%%%%%%%%%%%%%%%%%%%%%%%%%%%%%%%%%%%%%%%%%%%%%%%%
%\end{verbatim}
 \begin{theorem}\label{Th5.main2} For any $\mu,\nu>0$
 and any sufficiently large number $r$, there exists an
  external force $f_{\mu,\nu,r}\in H^1_\sigma(\R^2)$ such that
 %$$
 \begin{equation}\label{5.lsharp}
 \dim_f(\Cal A,\Phi)\ge c_2 G_1
 \end{equation}
 %$$
 for some absolute constant $c_2$. Moreover, $c_3r\le G_1\le c_4r$ for some absolute constants $c_3$ and $c_4$.
 \end{theorem}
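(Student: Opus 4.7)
The plan is to follow the scheme of Theorem~\ref{Th5.main1} adapted to the Ekman damped setting, using a scaling that absorbs both $\mu$ and $\nu$ and a Vishik vortex rescaled so that its unstable eigenvalue survives the Ekman shift. First I would perform the change of variables
\[
x = (\nu/\mu)^{1/2}x',\quad t = t'/\mu,\quad u(t,x) = (\nu\mu)^{1/2}\tilde u(t',x'),
\]
and analogously for $p$, which reduces \eqref{5.ENS} to the Ekman damped Navier--Stokes system on $\R^2$ with $\tilde\nu = \tilde\mu = 1$ and external force $\tilde f(x') = f((\nu/\mu)^{1/2}x')/(\mu^{3/2}\nu^{1/2})$. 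A direct computation then gives $\|\Rot \tilde f\|_{L^2(\R^2)}^2 = G_1$, so $G_1$ is exactly the square of the vorticity of the forcing in the rescaled problem, and the scaling being a conjugacy of dynamical systems preserves the fractal dimension of the attractor.

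Next I would take a Vishik vortex $\bar u_0$ satisfying the assumptions of Section~\ref{s1} with unstable eigenvalue $\lambda_0 > 0$ and replace it by the scaled vortex $\bar u_0^R(x) := R\bar u_0(Rx)$ for an $R$ fixed once and for all (depending only on $\lambda_0$) so large that $R^2\lambda_0 \ge 2$. Thanks to the scaling invariance of the pure Navier--Stokes system, $\bar u_0^R$ is still a Vishik type equilibrium, and its linearization has unstable eigenvalue $R^2\lambda_0$. Then I would form the multi-vortex $\bar u_\Xi(x) = \sum_{\xi_j\in\Xi}\bar u_0^R(x - \xi_j)$ with $\Xi\subset L\Bbb Z^2$, $\#\Xi = r$, $L$ chosen large enough for Theorem~\ref{Th4.main} to apply, and as external forcing the stationary residue
\[
\tilde f_\Xi := -\Dx\bar u_\Xi + \bar u_\Xi + P\divv(\bar u_\Xi\otimes\bar u_\Xi),
\]
so that $\bar u_\Xi$ is a steady solution of the rescaled Ekman damped system.

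The instability index of $\bar u_\Xi$ is then read off from Theorem~\ref{Th4.main}: the extra term $-\tilde\mu u = -u$ coming from Ekman damping is a scalar multiple of the identity, hence merely shifts every spectrum rigidly by $-1$ without touching any of the weighted, localization or Fredholm estimates of Sections~\ref{s0}--\ref{s4}. Applying Theorem~\ref{Th4.main} to $\Cal L_\Xi$ built around $\bar u_0^R$ produces an invariant subspace $M_{neu}$ of dimension $\#\Xi = r$ on which the Ekman damped linearization has spectrum confined to a $CL^{-1/2}$-ball around $R^2\lambda_0 - 1 \ge 1$; this is a genuine unstable subspace of the Ekman linearization, so the unstable manifold through $\bar u_\Xi$ has dimension at least $r$ and lies in $\Cal A$, giving $\dim(\Cal A,\Phi) \ge r$.

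Finally, since $f_0$ (and thus $\tilde f_0^R$) can be assumed compactly supported and the centres $\xi_j$ are $L$-separated, the pieces of $\tilde f_\Xi$ around the $\xi_j$ are almost disjointly supported and Lemma~\ref{Lem0.sum} together with the cubic decay of $\bar u_\Xi$ yield
\[
\|\Rot \tilde f_\Xi\|_{L^2}^2 = \#\Xi\cdot\|\Rot \tilde f_0^R\|_{L^2}^2\bigl(1 + O(L^{-3})\bigr),
\]
so $c_3 r \le G_1 \le c_4 r$ for all sufficiently large $r$, and combining with the previous step gives $\dim(\Cal A,\Phi) \ge c_2 G_1$. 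The main point to verify carefully is the scalar-shift observation above --- that the Ekman damping can be absorbed into the spectral parameter $\lambda$ of $\Cal L_\Xi$ without modifying any of its structural properties --- once this is granted, the rest of the argument is a scaling and counting exercise on top of Theorem~\ref{Th4.main}.
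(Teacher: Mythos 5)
Your proposal is correct and follows essentially the same route as the paper: rescale to $\mu=\nu=1$ (under which $G_1$ is invariant), use a rescaled Vishik vortex so that the unstable eigenvalue exceeds the Ekman shift (the paper phrases your explicit $R$-scaling as ``WLOG $\operatorname{Re}\lambda>1$''), add $\bar u_\Xi$ itself to the forcing so the multi-vortex is an equilibrium, apply Theorem~\ref{Th4.main} to get instability index $\ge\#\Xi$, and count $\|\Rot f'_\Xi\|_{L^2}^2\sim\#\Xi$. No gaps.
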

 \begin{proof}{\color{black} As in the proof of Theorem \ref{Th5.main1},
 we use scaling
$$
t=\frac {t'}\mu,\quad
x=\left(\frac\nu\mu\right)^{1/2}x',\quad \tilde u(t',x')=
\frac1{(\mu\nu)^{1/2}}u(t,x),\quad
\tilde p(t',x')=\nu^{-1}p(t,x),
$$
which reduces the problem to the case $\mu=\nu=1$
and with forcing term $\tilde f(x')=\frac1{\mu^{3/2}\nu^{1/2}}f(x)$.
 Let again $\bar u_0$ be the Vishik vortex satisfying the
 assumptions of section \ref{s1} and using scaling, we may
 assume without loss of generality that the unstable eigenvalue
 $\lambda$ satisfies $\Ree\lambda>1$.

 After that, we fix $f'_0:=f_0+\bar u_0$ in order to
 make $\bar u_0$ an equilibrium for the scaled equations
 \eqref{5.ENS} and  fix a
 sufficiently large $L$ for a set  of vertices~ $\Xi$
 with $\#\Xi=N$ in such a way that
 Theorem \ref{Th4.main} holds.
% Next, since $\dim_F\mathcal A<\infty$,
% assuming that  $\dim_F\mathcal A\ge1$
% (otherwise, there is nothing to prove)
%we take $\Xi$ such that $\#\Xi\le \dim_F\mathcal A$.
Estimating the term $\|\Rot f'_{\Xi}\|_{L^2}$ as before using that the supports of $\Rot f_i$ do not intersect, we have
$$
\dim_f(\mathcal A,\Phi)\ge  N,\ \ G_1=\|\Rot f'_\Xi\|^2_{L^2}=\#\Xi\|\Rot f'_{\Xi}\|^2_{L^2}\sim N
$$
 Returning to the original variables we have
$$
f_{\nu,\mu}(x)=\mu^{3/2}\nu^{1/2}f'_{\Xi}\left(\frac{\mu^{1/2} x}{\nu^{1/2}}\right),
$$
which completes the proof of the theorem.}
 \end{proof}
\begin{remark} In the case of periodic boundary conditions,
the sharp estimates \eqref{5.usharp} and \eqref{5.lsharp} are
obtained in \cite{IMT} under the extra condition that $\nu\ll\mu$
for the lower bounds. These lower bounds for the torus can be also
reproduced by our method and the extra restriction is related with
the fact that after scaling we will have the equation in the domain
 $\Omega_M=M\Omega$ with $M=\mu^{1/2}\nu^{-1/2}$ and it is crucial
for us that the scaled domain must be large. Of course, this condition
is not necessary when $\Omega=\R^2$. We also note that our method
also gives estimate \eqref{5.lsharp} for the case of equation
\eqref{5.ENS} in a bounded domain with {\color{black}stress-free} boundary
conditions. We return to this case somewhere else.
\end{remark}

{\color{black}{\bf Acknowledgement.} This work was done with the
financial support from the Russian Science Foundation (grant no.
23-71-30008 (AK), grant no. 25-11-20060 (SZ), and
 grant
no. 25-71-30001 (AI)).}

\end{document}